\newcommand{\hcirc}{\accentset{\circ}{h}}
\DeclareMathSymbol{\lsb@l}{\mathalpha}{letters}{`l}
\def\XXint#1#2#3{{\setbox0=\hbox{$#1{#2#3}{\int}$ }
		\vcenter{\hbox{$#2#3$ }}\kern-.6\wd0}}
\newtheorem{prop}{Proposition}
\newtheorem{thm}[prop]{Theorem}
\newtheorem{lem}[prop]{Lemma}
\newtheorem{coro}[prop]{Corollary}
\newtheorem{rema}[prop]{Remark}
\title[The Willmore center of mass of initial data sets
]{	The Willmore center of mass of initial data sets
}
\author{Michael Eichmair }
\address{ University of Vienna,
	Faculty of Mathematics,
	Oskar-Morgenstern-Platz 1,
	1090 Vienna,
	Austria \newline \indent
ORCiD: \href{https://orcid.org/0000-0001-7993-9536}{0000-0001-7993-9536}}
\email{michael.eichmair@univie.ac.at}
\author{Thomas Koerber}
\address{ University of Vienna,
	Faculty of Mathematics,
	Oskar-Morgenstern-Platz 1,
	1090 Vienna,
	Austria
\newline \indent
ORCiD: \href{https://orcid.org/0000-0003-1676-0824}{0000-0003-1676-0824}}
\email{thomas.koerber@univie.ac.at}
\begin{document}
	
\date{\today}
\onehalfspacing

\begin{abstract}

		We refine the Lyapunov-Schmidt analysis from our recent paper \cite{acws} to study the geometric center of mass of the asymptotic foliation by area-constrained Willmore surfaces of initial data for the Einstein field equations. 
	If the scalar curvature of the initial data vanishes at infinity, we show that this geometric center of mass agrees with the Hamiltonian center of mass.
	By contrast, we show that the positioning of large area-constrained Willmore surfaces is sensitive to the distribution of the energy density.
	 In particular, the geometric center of mass may differ from the Hamiltonian center of mass if the scalar curvature does not satisfy additional asymptotic symmetry assumptions.
\end{abstract}
\maketitle
\section{Introduction}	
Let $(M,g)$ be an asymptotically flat Riemannian $3$-manifold. Such Riemannian manifolds are used to model initial data of isolated gravitational systems for the Einstein field equations. The scalar curvature  of $(M,g)$ provides a lower bound for the local energy density of the initial data set. The  geometry of $(M,g)$ encodes global invariants of the evolving gravitating system. 
% Their asymptotic geometry encodes global invariants of the gravitating system. 
\\ \indent Recall that the  mass $m\in\mathbb{R}$ of such a manifold $(M,g)$,  proposed by R.~Arnowitt, S.~Deser, and C.~W.~Misner in \cite{ArnowittDeserMisner}, can be computed as a limit of flux integrals. More precisely,
\begin{align} \label{ADM mass}
m=\lim_{\lambda\to\infty}\sum_{i,\,j=1}^3\,\frac{1}{16\,\pi}\,\lambda^{-1}\int_{S_\lambda(0)}x^j\,\left[ (\partial_ig)(e_i,e_j)-(\partial_jg)(e_i,e_i)\right]\,\mathrm{d}\bar\mu
\end{align}
where the integrals are computed in an asymptotically flat chart of  $(M,g)$. R.~Bartnik \cite{Bartnik} has shown that the limit in the definition of \eqref{ADM mass} exists and does not depend on the particular choice of chart. If $(M,g)$ has non-negative scalar curvature and  is  not isometric to $\mathbb{R}^3$, R.~Schoen and S.-T.~Yau \cite{SchoenYau} and E.~Witten \cite{Witten} have shown that $m>0$. The Hamiltonian center of mass associated with $(M,g)$, proposed by T.~Regge and C.~Teitelboim \cite{ReggeTeitelboim} and by R.~Beig and N.~\'{O} Murchadha \cite{BeigOMurchada},
is then given by $C=(C^1,C^2,C^3)$ where
\begin{equation}
\begin{aligned}
C^\ell =\,&\lim_{\lambda\to\infty}\frac{1}{16\,\pi\,m}\,\lambda^{-1}\,\int_{S_\lambda(0)}\bigg(\,\sum_{i,\,j=1}^3x^\ell\,x^j\,\big[(\partial_ig)(e_i,e_j)-(\partial_jg)(e_i,e_i)\big]
\\&\qquad \qquad\qquad\qquad\qquad\, \quad-\sum_{i=1}^3\big[x^i\,g(e_i,e_\ell)-x^\ell\,g(e_i,e_i)\big]\bigg)\,\mathrm{d}\bar\mu
\end{aligned}
\label{center of mass}
\end{equation}
provided the limit  exists for each $\ell=1,\,2,\,3$. These limits are known to exist if if $g$ satisfies certain asymptotic symmetry conditions; see Theorem \ref{com existence criterion} below. By contrast, as  observed in \cite{ReggeTeitelboim}, the limit in \eqref{center of mass} may not exist if $g$ does not satisfy such additional assumptions. Explicit examples of asymptotically flat initial data with divergent center of mass have been constructed by R.~Beig and N.~\'{O} Murchadha \cite{BeigOMurchada}, by L.-H.~Huang \cite{Huang3}, and by C.~Cederbaum and C.~Nerz \cite{cederbaumnerz}. 
\\
\indent Let $\Sigma\subset M$ be a closed, two-sided surface with designated outward normal $\nu$ and corresponding mean curvature $H$. The Hawking mass of $\Sigma$ is the quantity
\begin{align} \label{hawking mass}
m_H(\Sigma)=\sqrt{\frac{|\Sigma|}{16\,\pi}}\bigg(1-\frac{1}{16\,\pi}\int_{\Sigma} H^2\,\mathrm{d}\mu\bigg).
\end{align} To qualify as a quasi-local mass in the sense of \cite[p.~235]{BartnikQuasi}, one would expect that the Hawking mass both detects the local energy distribution and recovers global physical quantities such as the mass \eqref{ADM mass} and the center of mass \eqref{center of mass} of the initial data set as asymptotic limits; see also \cite[p.~636]{Penrose}. In \cite{HuiskenIlmanen}, G.~Huisken and T.~Ilmanen have proved the Riemannian Penrose inequality by comparing  the Hawking mass of an outermost minimal surface to that of a large coordinate sphere in the end of $(M,g)$ using inverse mean curvature flow. By contrast, the Hawking mass of a closed surface $\Sigma\subset\mathbb{R}^3$ is negative unless $\Sigma$ is a round sphere. As a measure of the gravitational field, the quantity $m_H(\Sigma)$ is therefore not appropriate unless $\Sigma$ is in some way special.\\ \indent  As discussed in e.g.~\cite{acws}, there are two classes of surfaces that are particularly well-adapted to the Hawking mass: 
\begin{enumerate}
	\item[$\circ$] stable constant mean curvature spheres
	\item[$\circ$] area-constrained Willmore spheres
\end{enumerate}
 In \cite{ChristodoulouYau}, D.~Christodoulou and S.-T.~Yau have observed that stable constant mean curvature spheres have non-negative Hawking mass if $(M,g)$ has non-negative scalar curvature. Meanwhile, area-constrained Willmore surfaces are by definition critical points of the Hawking mass with respect to an area constraint and thus potential maximizers of the Hawking mass among domains with a prescribed amount of perimeter. These surfaces satisfy the constrained Willmore equation
\begin{align}
\Delta H+(|\hcirc|^2+\operatorname{Ric}(\nu,\nu)+\kappa)\,H=0.
\label{constrained Willmore quantity}
\end{align}
Here, $\Delta$ is the non-positive Laplace-Beltrami operator with respect to the induced metric on $\Sigma$, $\hcirc$ the traceless part of the second fundamental form $h$,  $\operatorname{Ric}$ the Ricci curvature of $(M,g)$, and $\kappa\in\mathbb{R}$ a Lagrange multiplier. Note that area-constrained Willmore surfaces are also area-constrained critical points of the Willmore energy
\begin{align*} 
\frac14\int_{\Sigma} H^2\,\mathrm{d}\mu.
\end{align*}
%nothing
 T.~Lamm, J.~Metzger, and F.~Schulze have studied foliations of asymptotically flat Riemannian 3-manifolds by  area-constrained Willmore surfaces and investigated the monotonicity properties of the Hawking mass along this foliation; see \cite[Theorem 1, Theorem 2, and Theorem 4]{LammMetzgerSchulze} and also the subsequent work \cite{koerber} of the second-named author. Results analogous to those in \cite{LammMetzgerSchulze} but in a space-time setting have been obtained by A.~Friedrich in \cite{friedrich}.  \\
\indent
There have been many recent developments on large stable constant mean curvature spheres in asymptotically flat manifolds. In particular, it is known that the end of every asymptotically flat $3$-manifold $(M,g)$ with non-negative scalar curvature is foliated by large isoperimetric surfaces. This foliation detects the Hamiltonian center of mass \eqref{center of mass} of $(M,g)$ in a natural way. We provide a brief survey of these results in Appendix \ref{cmc appendix}.
\\ \indent
By comparison, much less is known about area-constrained Willmore surfaces. To describe recent developments, given an integer $k\geq 2$, we say that $(M,g)$ is $C^k$-asymptotic to Schwarzschild with mass $m>0$ if there is a non-empty compact subset of $M$ whose complement is diffeomorphic to $\{x\in\mathbb{R}^3:|x|>1\}$ and such that, in this so-called asymptotically flat chart of the end of $(M,g)$, there holds,  for every multi-index $J$ with $|J|\leq k$ and as $x\to\infty$,
\begin{align} 
g=\bigg(1+\frac{m}{2\,|x|}\bigg)^4\bar g+\sigma
\qquad\text{with}\qquad  \partial_J\sigma =O(|x|^{-2-|J|}).
 \label{schwarzschild sigma decay} \end{align}  
Here,  $\bar g$ is the Euclidean metric on $\mathbb{R}^3$.  Note that $(M,g)$ is modeled upon the initial data of a Schwarzschild black hole. 
Given such a manifold $(M,g)$, we fix an asymptotically flat chart and use $B_r$, where $r>1$, to denote the open, bounded domain in $(M,g)$ whose boundary corresponds to $S_r(0)$ with respect to this chart.  \\ \indent 
 In our recent paper \cite{acws}, we have  established the following existence and uniqueness result. For its statement, recall that
the area radius $\lambda(\Sigma)>0$ of a closed surface $\Sigma\subset M$ is defined by $$4\,\pi\, \lambda^2(\Sigma)=|\Sigma|$$ while the inner radius $\rho(\Sigma)$ of such a surface is defined by
$$
\rho(\Sigma)=\sup\{r>1: B_{r}\cap\Sigma=\emptyset\}.$$
\begin{thm}[{\cite[Theorems 5 and 8]{acws}}] 
Suppose that $(M,g)$ is $C^4$-asymptotic to Schwarzschild with mass $m>0$ and that its scalar curvature $R$ satisfies, as $x\to\infty$, \label{acw existence}
\begin{align*}
\sum_{i=1}^3x^i\,\partial_i(|x|^2\,R)\leq o(|x|^{-2})\qquad\text{and}\qquad  R(x)-R(-x)=o(|x|^{-4}). 
\end{align*}
Then there exist numbers $\kappa_0>0$ and $\epsilon_0>0$ and a family of stable area-constrained Willmore spheres \begin{align} \label{acw foliation}
\{\Sigma(\kappa):\kappa\in(0,\kappa_0)\}
\end{align}
that foliate the complement of a compact subset of $M$ and such that each sphere $\Sigma(\kappa)$ satisfies \eqref{constrained Willmore quantity} with  parameter $\kappa$. Moreover,
given $\delta>0$, there exists a number $\lambda_0>1$ such that every area-constrained Willmore sphere $\Sigma\subset M$ with
$$
\delta\,\lambda(\Sigma)<\rho(\Sigma),\qquad \delta\,\rho(\Sigma)<\lambda(\Sigma),\qquad |\Sigma|>4\,\pi\,\lambda_0^2,\qquad\text{and}\qquad \int_{\Sigma}|\hcirc|^2\,\mathrm{d}\mu<\epsilon_0
$$ satisfies $\Sigma=\Sigma(\kappa)$ for some $\kappa\in(0,\kappa_0)$.
\end{thm}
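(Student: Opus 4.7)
The plan is to carry out a Lyapunov-Schmidt reduction over the three-parameter family of centered coordinate spheres $S_\lambda(a)$ in the end of $(M,g)$. Because $(M,g)$ is $C^4$-asymptotic to Schwarzschild, a direct expansion shows that $S_\lambda(0)$ is an approximate area-constrained Willmore sphere when $\kappa$ is of order $\lambda^{-3}$, with error governed jointly by $\sigma$, the scalar curvature $R$, and the center shift $a$. I would parameterize nearby surfaces as normal graphs $\Sigma(\lambda,a,u)=\{x+u(x)\,\nu(x):x\in S_\lambda(a)\}$ and impose the gauge condition that $u$ be $L^2$-orthogonal on $S_\lambda(a)$ to the three coordinate functions $x^\ell-a^\ell$, which span the approximate kernel of the linearized Willmore operator at the Schwarzschild background.

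The infinite-dimensional step is to solve, for each admissible triple $(\lambda,a,\kappa)$, the constrained Willmore equation \eqref{constrained Willmore quantity} modulo its projection onto the coordinate functions. Linearizing at $u=0$ in Euclidean space and rescaling by $\lambda^{-4}$ yields the Jacobi operator of the Willmore energy on the round unit sphere; on the $L^2$-orthogonal complement of the first spherical harmonics, this operator is invertible with bounds uniform in $\lambda$. The decay \eqref{schwarzschild sigma decay} controls the perturbation from the exact Schwarzschild situation in a suitable weighted H\"older norm, and a contraction-mapping argument produces a unique small graph function $u=u(\lambda,a,\kappa)$ solving the projected equation.

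The finite-dimensional step is to solve the three remaining scalar equations by choosing the center $a$. Expanding each projected component in powers of $\lambda^{-1}$, the leading contribution takes the schematic form $\lambda^{-3}\,(a^\ell-P^\ell(\lambda))+o(\lambda^{-3})$, where $P(\lambda)$ is built from integrals of $\sigma$ and of $R$ against the coordinate functions over $S_\lambda(a)$. The hypothesis $R(x)-R(-x)=o(|x|^{-4})$ eliminates the odd part of the $R$-contribution that would otherwise prevent these integrals from converging at the required rate, while the hypothesis $\sum_i x^i\,\partial_i(|x|^2\,R)\leq o(|x|^{-2})$ provides the one-sided derivative bound needed for non-degeneracy of the reduced map in $a$. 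The implicit function theorem then yields, for each sufficiently small $\kappa>0$, a unique center $a(\kappa)$ and hence a sphere $\Sigma(\kappa)$. The foliation claim follows from monotone dependence of $\lambda$ on $\kappa$, and stability follows from the uniform spectral bound obtained in the infinite-dimensional step.

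The main obstacle is the uniqueness statement, which amounts to reversing the construction. Given any area-constrained Willmore sphere $\Sigma$ subject to the stated pinching and smallness assumptions, I would first invoke a De Lellis-M\"uller-type estimate, together with the two-sided comparison between $\rho(\Sigma)$ and $\lambda(\Sigma)$, to show that $\Sigma$ is a small normal graph, in a suitable weighted H\"older norm, over $S_{\lambda(\Sigma)}(a)$ for some $a$ with $|a|\ll\lambda(\Sigma)$. After a gauge fix making the graph function $L^2$-orthogonal to the coordinate functions on $S_{\lambda(\Sigma)}(a)$, the uniqueness part of the infinite-dimensional step identifies $\Sigma$ with $\Sigma(\lambda(\Sigma),a,u(\lambda(\Sigma),a,\kappa))$ for some $\kappa$, and uniqueness in the finite-dimensional step forces $\Sigma=\Sigma(\kappa)$. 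The hardest technical issue throughout is to track the error estimates sharply enough that the $O(\lambda^{-3})$ solvability term in the reduced equation is not absorbed by remainders; this is precisely where the full $C^4$-Schwarzschild decay and both asymptotic symmetry conditions on $R$ enter in an essential way.
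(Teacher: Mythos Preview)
Your overall architecture---Lyapunov--Schmidt reduction over centered coordinate spheres, with the infinite-dimensional step solved on the $L^2$-orthogonal complement of the first spherical harmonics---matches the construction reviewed in Appendix~\ref{LS appendix} (Proposition~\ref{LS prop}). The gap is in the finite-dimensional step.

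You model the reduced equation as $\lambda^{-3}\,(a^\ell-P^\ell(\lambda))+o(\lambda^{-3})$ and then invoke the implicit function theorem, claiming the one-sided hypothesis $\sum_i x^i\,\partial_i(|x|^2 R)\le o(|x|^{-2})$ gives ``non-degeneracy of the reduced map in $a$.'' This does not hold. In the paper's normalization the reduction is variational: $\Sigma_{\xi,\lambda}$ is area-constrained Willmore if and only if $\xi=\lambda^{-1}a$ is a critical point of $G_\lambda$, and Lemma~\ref{G lemma} gives $G_\lambda=G_1+G_{2,\lambda}+O(\lambda^{-1})$ where $G_1$ is the explicit Schwarzschild contribution \eqref{G1 definition} and $G_{2,\lambda}(\xi)=2\,\lambda\int_{\mathbb{R}^3\setminus B_\lambda(\lambda\xi)}R\,\mathrm{d}\bar v$. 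The crucial point (stressed in the outline of the paper) is that the Willmore energy is translation-invariant in exact Schwarzschild to leading order, so $\bar D G_1$ and $\bar D G_{2,\lambda}$ are both $O(1)$ in $\xi$: the scalar-curvature contribution \emph{competes} with the Schwarzschild contribution rather than being a lower-order perturbation of a diagonal map. Consequently the reduced system is not ``identity plus small,'' and a one-sided inequality on $R$ cannot yield the two-sided invertibility the implicit function theorem requires.

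What the paper actually does is use the explicit strict convexity of $G_1$ (Lemma~\ref{g1 properties}) together with the one-sided radial estimate $|\xi|^{-1}\sum_i\xi^i\,\partial_i G_{2,\lambda}(\xi)\ge -o(1)$ of Lemma~\ref{g der lemma}; the inequality hypothesis on $R$ enters precisely here, and the antisymmetry hypothesis $R(x)-R(-x)=o(|x|^{-4})$ is used in the same lemma to kill the odd part of the $R$-integral at the required rate. These combine to force any critical point of $G_\lambda$ to lie near the origin and to be unique (Proposition~\ref{existence prop}); existence then comes from minimizing $G_\lambda$, not from an implicit function theorem. Your uniqueness sketch for general $\Sigma$ (De Lellis--M\"uller roundness, gauge-fix, then fall into the Lyapunov--Schmidt chart) is in line with the argument in \cite{acws}, but it too ultimately relies on the same variational trap for $G_\lambda$ rather than on non-degeneracy of a linearization.
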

 The canonical foliation by area-constrained Willmore surfaces given in Theorem \ref{acw existence}  gives rise to a new notion of geometric center of mass,
\[C_{ACW}=(C_{ACW}^1,C_{ACW}^2,C_{ACW}^3),\] 
 where
\begin{align} \label{acwcom def}
C_{ACW}^\ell=\lim_{\kappa\to0} |\Sigma(\kappa)|^{-1}\int_{\Sigma(\kappa)} x^\ell\,\mathrm{d}\mu
\end{align}
provided this limit exists for each $\ell=1,\,2,\,3$.
\subsection*{Outline of the results}
Our first main result in this paper shows that the geometric center of mass of the foliation \eqref{acw foliation} exists and agrees with the Hamiltonian center of mass \eqref{center of mass} of $(M,g)$  if the scalar curvature is sufficiently symmetric with respect to the Hamiltonian center of mass.
\begin{thm}
	Let $(M,g)$ be $C^4$-asymptotic to Schwarzschild with mass $m>0$ and Hamiltonian center of mass $C=(C^1,\,C^2,\,C^3)$ and suppose that the scalar curvature satisfies, as $x\to\infty$,  \label{acwt1}
	\begin{align}  \sum_{i=1}^3\tilde x^i\,\partial_i(|\tilde x|^2\, R(\tilde x))&\leq o(|x|^{-3}),
	\label{improved center} \\ 	R(\tilde x)-R(-\tilde x)&=o(|x|^{-5}), 
	\label{improved center 2} 
	\end{align}
	 where $\tilde x=x-C$. 	 Then $C_{ACW}$ exists and $C=C_{ACW}.$ 
\end{thm}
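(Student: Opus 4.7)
The plan is to reduce to the case $C=0$ and then to show that the coordinate barycenter of the leaves,
\[
z(\kappa):= |\Sigma(\kappa)|^{-1}\int_{\Sigma(\kappa)}x\,\mathrm{d}\mu,
\]
tends to $0$ as $\kappa \to 0$. Translating the asymptotically flat chart by $C$, the hypotheses \eqref{improved center}--\eqref{improved center 2} of Theorem~\ref{acwt1} become strengthenings, by one order in $|x|$, of the scalar-curvature assumptions of Theorem~\ref{acw existence}. In particular, the foliation $\{\Sigma(\kappa):\kappa \in (0,\kappa_0)\}$ still exists in the translated chart, and proving the theorem is equivalent to proving $z(\kappa)\to 0$.

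The approach is to refine the Lyapunov-Schmidt analysis of \cite{acws}. In that paper, each leaf $\Sigma(\kappa)$ is realized as a normal graph, over some coordinate sphere $S_{\lambda(\kappa)}(z_0(\kappa))$, of a function in the $L^2$-complement of the approximate kernel spanned by the coordinate functions $x^1,x^2,x^3$ (which correspond to translations of the model sphere). The approximate center $z_0(\kappa)$, which agrees with $z(\kappa)$ up to errors that vanish as $\kappa\to 0$, is pinned down by a reduced three-dimensional equation, namely the projection of the constrained Willmore equation \eqref{constrained Willmore quantity} onto the approximate kernel. I would expand this reduced equation one order more precisely than was required in \cite{acws}, isolating the Schwarzschildian contribution---which is translation-covariant and vanishes exactly when $z_0=0$---from the correction induced by the non-Schwarzschildian perturbation $\sigma$ in \eqref{schwarzschild sigma decay}.

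The heart of the argument is to show, under the refined hypotheses \eqref{improved center}--\eqref{improved center 2}, that the correction from $\sigma$ is $o(1)$ as $\kappa\to0$ rather than merely $O(1)$. The leading contribution of $\sigma$ to the reduced equation enters through the curvature term $\operatorname{Ric}(\nu,\nu)\,H$ in \eqref{constrained Willmore quantity}. Using the twice-contracted second Bianchi identity to convert traces of Ricci into the scalar curvature $R$ and its derivative, and then integrating by parts on $S_{\lambda}(z_0)$, this contribution can be rewritten in terms of integrals of $|\tilde x|^2 R(\tilde x)$ and its Pohozaev derivative $\sum \tilde x^i\,\partial_i(|\tilde x|^2 R(\tilde x))$, plus an antisymmetric remainder controlled by $R(\tilde x)-R(-\tilde x)$. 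Hypothesis \eqref{improved center} then bounds the Pohozaev piece at the improved rate, while \eqref{improved center 2} eliminates the leading even part of $R$ when integrated against the odd coordinate functions $\tilde x^\ell$. Together, this forces $z_0(\kappa)\to 0$ and hence $C_{ACW}=0=C$.

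I expect the main obstacle to be the sharp tracking of how the graph function of $\Sigma(\kappa)$ and the area radius $\lambda(\kappa)$ feed back into the reduced equation. In \cite{acws}, estimates on the graph function are calibrated to the existence threshold, whereas here the asymptotic analysis needs to be consistent one order further, and the graph function itself depends nontrivially on $z_0(\kappa)$. A fixed-point / implicit-function scheme that simultaneously upgrades the graph function estimates of \cite{acws} at this finer order, carried out while exploiting the improved symmetry of $R$ near infinity through \eqref{improved center} and \eqref{improved center 2}, should close the loop.
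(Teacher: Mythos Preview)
Your proposal is correct and follows essentially the same route as the paper: refine the Lyapunov--Schmidt reduction of \cite{acws} by one order, separate the Schwarzschild contribution (which fixes $\xi$) from the $\sigma$-contribution, and use the strengthened scalar-curvature hypotheses to kill the latter. Your upfront translation to $C=0$ is a harmless simplification; the paper instead keeps $C$ explicit and arrives at the formula $\lambda\,\xi(\lambda)=C+\frac{1}{128\pi}\,\lambda^3\int_{S_\lambda(\lambda\xi(\lambda))}R\,\bar\nu\,\mathrm{d}\bar\mu+o(1)$ before invoking \eqref{improved center}--\eqref{improved center 2}.

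One point of caution: your sketch suggests that the entire $\sigma$-correction to the reduced equation can be rewritten, via Bianchi, in terms of the scalar curvature and its Pohozaev derivative. In the paper's computation this is not quite what happens. After linearizing $\operatorname{Ric}$ in $\sigma$ and integrating by parts, a genuine boundary integral of first derivatives of $\sigma$ survives on $S_\lambda(0)$---terms that do \emph{not} collapse to $R$. The key observation (Corollary~\ref{com coro}) is that this boundary integral is precisely the flux defining the Hamiltonian center of mass \eqref{center of mass}. In your translated chart that flux tends to zero, so your conclusion is unaffected; but when you carry out the computation you should expect and recognize this residual $\sigma$-boundary term rather than hope it disappears into scalar-curvature identities.
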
 
In particular, if $R=0$ outside a compact set, then $C_{ACW}$ exists and equals the Hamiltonian center of mass $C$.
\begin{rema}
	According to Theorem \ref{com existence criterion} and Remark \ref{com existence}, if
	\begin{align*}  R(x)-R(-x)=O(|x|^{-5}), 
	\end{align*}
	then $C$ exists.
\end{rema}
\begin{rema}
	The assumptions \eqref{improved center} and \eqref{improved center 2} of Theorem \ref{acwt1} hold if, for instance,
	$$
	R=o(|x|^{-4})\qquad \text{and} \qquad R(x)-R(-x)=o(|x|^{-5});
	$$
	see the argument leading to \eqref{dweakercenter}.
\end{rema}
The following result shows that the assumptions \eqref{improved center} and \eqref{improved center 2} in Theorem \ref{acwt1} cannot be relaxed in any substantial way.
\begin{thm}
	There exists a Riemannian $3$-manifold $(M,g)$ that is $C^k$-asymptotic to Schwarzschild with mass $m=2$ for every $k\geq 2$ and satisfies, for every multi-index $J$ and  as $x\to\infty$,
	$$
	\partial_J\sigma=O(|x|^{-3-|J|})
	$$
	  such that the Hamiltonian center of mass  exists while \label{com counterexample 2}
	the limit in \eqref{acwcom def} does not exist.
\end{thm}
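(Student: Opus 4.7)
The strategy is to give an explicit perturbation of Schwarzschild that quantitatively violates \eqref{improved center 2} while keeping the Hamiltonian center of mass convergent, and then to extract the divergence of $C_{ACW}$ from a refinement of the Lyapunov-Schmidt analysis of \cite{acws}. Let $g_{Schw}$ be the spatial Schwarzschild metric of mass $m=2$, let $\chi$ be a smooth cutoff that vanishes on $\{|x| \leq 1\}$ and equals $1$ for large $|x|$, and set $g = g_{Schw} + \sigma$ where
\[\sigma_{ij}(x) = \chi(|x|)\,\frac{x^1}{|x|^4}\,\bar g_{ij}.\]
Then $\sigma$ is smooth, odd under $x \mapsto -x$, and satisfies $\partial_J \sigma = O(|x|^{-3-|J|})$ for every multi-index $J$. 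Scaling $\sigma$ to be small in $C^2$, $g$ is Riemannian and $C^k$-asymptotic to Schwarzschild with mass $2$ for every $k \geq 2$. More generally, any $\sigma_{ij}(x) = \chi(|x|)\,\Psi_{ij}(x/|x|)\,|x|^{-3}$ with antipodally odd $\Psi$ on $S^2$ could be used in place of this specific choice.

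A direct calculation now verifies the two required properties of $g$. For \eqref{center of mass}, spherical symmetry causes the contribution of $g_{Schw}$ to integrate to zero on $S_\lambda(0)$, while the contribution of $\sigma$ is pointwise bounded by $O(|x|^{-2})$; after the prefactor $\lambda^{-1}$ and surface integration, it is $O(\lambda^{-1}) \to 0$. Hence the Hamiltonian center of mass $C = 0$ exists. For the scalar curvature, a direct calculation yields $R(x) = -8\,x^1/|x|^6 + O(|x|^{-6})$; the odd $|x|^{-5}$ structure reflects that $\sigma$ is odd while $g_{Schw}$ is even, so the linearized scalar curvature $L[\sigma]$ is odd. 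Consequently $R(x) - R(-x) = -16\,x^1/|x|^6 + o(|x|^{-5})$ is $O(|x|^{-5})$ but not $o(|x|^{-5})$, so the asymmetry hypothesis \eqref{improved center 2} of Theorem \ref{acwt1} is violated at the critical order.

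The non-existence of $C_{ACW}^1$ would then be deduced from a refinement of the Lyapunov-Schmidt reduction of \cite{acws}. The reduced equation for the position of $\Sigma(\kappa)$ should produce an expansion of the form
\[|\Sigma(\kappa)|^{-1}\int_{\Sigma(\kappa)}x^1\,d\mu = C^1 + T(\kappa) + o(1),\]
where $T(\kappa)$ is a specific linear functional of the first odd spherical harmonic mode of $|x|^5 R$ carried by $\Sigma(\kappa)$; under \eqref{improved center 2} this functional vanishes as $\kappa \to 0$, whereas for our $\sigma$ it diverges, at a rate expected to be logarithmic in the area radius $\lambda(\Sigma(\kappa))$. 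The principal obstacle is precisely this refinement: the expansion used in the proof of Theorem \ref{acwt1} closes up to $o(1)$ only under \eqref{improved center 2}, so one must sharpen the error analysis to isolate the borderline odd $|x|^{-5}$ mode of $R$ as an explicit source in the reduced equation rather than absorbing it into a remainder. With this refined expansion in place, the explicit $\sigma$ above produces the claimed non-convergence of the limit in \eqref{acwcom def}.
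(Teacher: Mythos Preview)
Your example does not produce the claimed non-convergence. With $\sigma_{ij}=\chi(|x|)\,x^1|x|^{-4}\,\bar g_{ij}$ the perturbation is, outside a compact set, exactly homogeneous of degree $-3$, and hence the leading part of the scalar curvature $R=-8\,x^1|x|^{-6}+O(|x|^{-6})$ is exactly homogeneous of degree $-5$. Your metric satisfies the standing hypotheses of Section~\ref{com section} (in particular $R(x)-R(-x)=O(|x|^{-5})$), so Lemma~\ref{xi estimate} and Corollary~\ref{com coro} already apply and give
\[
\lambda\,\xi(\lambda)=C+\frac{1}{128\,\pi}\,\lambda^3\int_{S_\lambda(\lambda\,\xi(\lambda))}R\,\bar\nu\,\mathrm{d}\bar\mu+o(1).
\]
A direct computation (as in the paper's proof) now shows
\[
\lambda^3\int_{S_\lambda(0)}\big(-8\,x^1|x|^{-6}\big)\,\bar\nu\,\mathrm{d}\bar\mu=-\frac{32\,\pi}{3}\,e_1,
\]
a \emph{constant}, so $\lambda\,\xi(\lambda)\to -\tfrac{1}{12}\,e_1$. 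By \eqref{com relation} the limit $C_{ACW}=-\tfrac{1}{12}\,e_1$ exists; it merely differs from $C=0$. Your expected logarithmic divergence does not occur: for any $\sigma$ with a fixed homogeneous profile at order $|x|^{-3}$, the source term $\lambda^3\int_{S_\lambda}R\,\bar\nu\,\mathrm{d}\bar\mu$ is bounded.

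The missing idea is to make the odd $|x|^{-5}$ scalar curvature \emph{oscillate} in $|x|$. The paper takes the conformal factor $1+|x|^{-1}-\tfrac18\,\eta(x)\,x^3|x|^{-4}$ with $\eta(x)=\sum_{k}\chi(10^{-k}|x|)$ supported on disjoint annuli, so that $R=4\,x^3|x|^{-6}$ on some radii and $R=0$ on others. Then $\lambda^3\int_{S_\lambda}R\,\bar\nu\,\mathrm{d}\bar\mu$ oscillates between $\tfrac{16\pi}{3}e_3$ and $0$ along two interlacing sequences of radii, forcing $\lambda\,\xi(\lambda)$ to oscillate between $\tfrac{1}{24}e_3$ and $0$, and the limit in \eqref{acwcom def} fails to exist. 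No refinement of the Lyapunov--Schmidt analysis beyond Corollary~\ref{com coro} is needed; the point is the construction of $\sigma$. Your example, as it stands, proves only the weaker statement that $C_{ACW}$ can exist yet differ from $C$.
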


\indent Theorem \ref{com counterexample 2} and its proof show that, in general, the positioning of the foliation \eqref{acw foliation} is not governed by the Hamiltonian center of mass of $(M,g)$ but instead fine-tuned to the asymptotic distribution of  scalar curvature; see Remark \ref{positioning remark}. By contrast, the positioning of large stable constant mean curvature spheres is not sensitive to the  distribution of scalar curvature; see Remark \ref{cmc positioning}. This suggests that large area-constrained Willmore spheres are better suited to detect the local energy distribution of an initial data set than large stable constant mean curvature spheres.
\\ \indent  In the second part of this paper, we lay the foundation to investigate the interplay between the positioning of area-constrained Willmore surfaces and the asymptotic distribution of the scalar curvature  more thoroughly by extending Theorem \ref{acw existence} to manifolds $(M,g)$ that are asymptotic to Schwarzschild but whose scalar curvature does not exhibit any asymptotic symmetries beyond those implied by \eqref{schwarzschild sigma decay}.

\begin{thm}
	Let $(M,g)$ be $C^4$-asymptotic to Schwarzschild with mass $m>0$ and scalar curvature $R$ satisfying, as $x\to\infty$,
	\begin{align}
	R\geq-o(|x|^{-4}). \label{r lower bound}
	\end{align}
  There exist a number  $\kappa_0>0$ and a family $\{\Sigma(\kappa):\kappa\in(0,\kappa_0)\}$ of   area-constrained Willmore spheres $\Sigma(\kappa)$ such that $\Sigma(\kappa)$ satisfies \eqref{constrained Willmore quantity} with parameter $\kappa$ and 
	$$
	\lim_{\kappa\to0} \rho(\Sigma(\kappa))=\infty, \qquad 
	\limsup_{\kappa\to0} \rho(\Sigma(\kappa))^{-1}\,\lambda(\Sigma(\kappa))<\infty, \qquad \text{and}\qquad \lim_{\kappa\to0} \int_{\Sigma(\kappa)} |\hcirc|^2\,\mathrm{d}\mu=0.
	$$
	Moreover, if the scalar curvature satisfies, as $x\to\infty$, \label{general existence thm}
	\begin{align}
\sum_{i=1}^3	x^i\,\partial_i(|x|^2\,R)&\leq o(|x|^{-2}), \label{growth with error 2}
	\end{align}
	then there exists a number $\epsilon_0>0$ with the following property. Given $\delta>0$, there exists a number $\lambda_0>1$ such that every area-constrained Willmore sphere $\Sigma\subset M$ with
	\begin{align} \label{required properties} 
\delta\,	\lambda(\Sigma)<\rho(\Sigma),\qquad\delta\, \rho(\Sigma)<\lambda(\Sigma),\qquad |\Sigma|>4\,\pi\,\lambda_0^2,\qquad \text{and}\qquad\int_{\Sigma}|\hcirc|^2\,\mathrm{d}\mu<\epsilon_0
	\end{align}  satisfies $\Sigma=\Sigma(\kappa)$ for some $\kappa\in(0,\kappa_0)$.
\end{thm}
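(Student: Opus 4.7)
The plan is to extend the Lyapunov--Schmidt analysis of \cite{acws} so that the symmetry hypothesis on the scalar curvature used there is not required for existence, and is replaced, for uniqueness, by the growth condition \eqref{growth with error 2}. Following \cite{acws}, I would parametrize candidate surfaces as normal graphs $\Sigma_{y,r,f}$ over Euclidean coordinate spheres $S_r(y)$, where $|y|$ and $r$ are both large with $|y|/r$ in a fixed compact subinterval of $(1,\infty)$ so that $\lambda/\rho$ stays bounded, and where $f$ is small in an appropriate H\"older norm. The infinite-dimensional part of the reduction proceeds as in \cite{acws}: coercivity of the linearization of the constrained Willmore operator on a round sphere modulo its three-dimensional translation kernel, together with the implicit function theorem, yields a function $f=f(y,r,\kappa)$ solving \eqref{constrained Willmore quantity} orthogonally to the translation kernel. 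No symmetry assumption on $R$ enters this step.

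What remains is a three-dimensional residual equation in $y$, with parameters $r$ and $\kappa$. Schematically, the leading-order expansion takes the form
\[
\Lambda_{r,\kappa}(y)\;=\;A_{\mathrm{Sch}}(y,r)\,+\,\Xi(y,r)\,+\,\text{lower order},
\]
where $A_{\mathrm{Sch}}$ is the Schwarzschild background attraction and $\Xi$ collects the contribution of the non-Schwarzschild part of the metric, essentially a weighted integral of $R$ against the coordinate functions over the approximating sphere. In the symmetric regime of Theorem \ref{acw existence}, $\Xi$ is subleading at the relevant order and $\Lambda_{r,\kappa}=0$ can be solved explicitly. In the present generality I would instead obtain $y(r,\kappa)$ as a critical point, for instance a maximizer, of the reduced functional whose gradient is $\Lambda_{r,\kappa}$. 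The lower bound \eqref{r lower bound} enters precisely here: it prevents the possibly indefinite term $\Xi$ from overwhelming the Schwarzschild attraction on the parameter region where $|y|/r$ lies in the prescribed compact subinterval, so that the reduced functional attains a maximum there. Coupling this with the area--Lagrange-multiplier relation determined by $\kappa$ produces the family $\Sigma(\kappa)=\Sigma_{y(\kappa),r(\kappa),f}$, and the three asymptotic properties $\rho(\Sigma(\kappa))\to\infty$, $\lambda(\Sigma(\kappa))/\rho(\Sigma(\kappa))$ bounded, and $\int|\hcirc|^2\,\mathrm{d}\mu\to 0$ follow from the quantitative control on $f$ together with $|y(\kappa)|,r(\kappa)\to\infty$.

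For the uniqueness statement under \eqref{growth with error 2}, I would follow the strategy of \cite[Theorem 8]{acws}: any area-constrained Willmore sphere $\Sigma$ satisfying \eqref{required properties} is $W^{2,2}$-close to some coordinate sphere $S_r(y)$, hence can be rewritten as a normal graph $\Sigma_{y,r,f}$ with small $f$. Uniqueness in the implicit function theorem step identifies $f$ with $f(y,r,\kappa)$, reducing the problem to showing that $\Lambda_{r,\kappa}=0$ has a unique solution within the parameter region. The role of \eqref{growth with error 2} is exactly as in \cite{acws}: it controls the sign of the error terms in the expansion of $\Lambda_{r,\kappa}$ in a way that forces uniqueness of the critical point of the reduced functional in the relevant region, thereby excluding spurious solutions.

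\textbf{Main obstacle.} The decisive new difficulty is the analysis of the three-dimensional residual equation once the symmetry of $R$ is dropped. There is no explicit solve; a critical point of the reduced functional must be produced variationally, shown to depend continuously on $\kappa$, and used to extract the three asymptotic properties. The scalar curvature lower bound \eqref{r lower bound} is precisely what allows this variational argument to close, taking the place of the cancellation that the symmetry of $R$ provided in \cite{acws}. This is also why only a family, and not a foliation, is obtained at the level of generality of Theorem \ref{general existence thm}.
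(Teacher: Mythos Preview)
Your overall architecture is correct and matches the paper: Lyapunov--Schmidt reduction as in \cite{acws}, existence via a variational argument on the reduced functional $G_\lambda$ (this is indeed what the paper does, simply citing the proof of \cite[Theorem 5]{acws}, where the lower bound \eqref{r lower bound} on $R$ guarantees that $G_\lambda$ attains an interior minimum on $\{|\xi|<1-\delta\}$), and uniqueness by showing that the reduced functional has only one critical point.

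The gap is in your uniqueness sketch. You write that the role of \eqref{growth with error 2} is ``exactly as in \cite{acws}'', controlling the sign of error terms. But in \cite{acws} the growth condition was paired with the antisymmetry hypothesis $R(x)-R(-x)=o(|x|^{-4})$, and it is only this combination that yields the radial derivative bound $|\xi|^{-1}\sum_i\xi^i\,\partial_i G_{2,\lambda}(\xi)\ge -o(1)$ (Lemma \ref{g der lemma}). Once symmetry is dropped, that argument no longer goes through: the radial derivative of $G_{2,\lambda}$ can have either sign and need not be small. The paper's actual mechanism is different and genuinely new: it proves that \eqref{growth with error 2} alone forces $G_{2,\lambda}$ to be \emph{convex}. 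This is the content of Lemma \ref{monotone gradient lemma} and Corollary \ref{monotone gradient}, established by a geometric comparison argument---for $f\ge 0$ with $\sum_i x^i\,\partial_i(|x|^2 f)\le 0$, one compares flux integrals $\int_{S_\lambda(\lambda\xi)} f\,\bar g(\bar\nu,\cdot)\,\mathrm{d}\bar\mu$ at two centers $\xi_1,\xi_2$ by projecting the hemispheres radially from the origin and using the decay of $|x|^2 f$ along rays. Convexity of $G_{2,\lambda}$, together with the strict convexity of $G_1$ (Lemma \ref{g1 properties}) and the expansion $G_\lambda=G_1+G_{2,\lambda}+O(\lambda^{-1})$, then makes $G_\lambda$ strictly convex for large $\lambda$, so it has at most one critical point. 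Only after this is established can one invoke the argument of \cite[Theorem 8]{acws}. Your proposal does not supply, or even hint at, this convexity step; without it the uniqueness claim is unsupported.
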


\begin{rema}
	The assumption $\delta\,\rho(\Sigma)<\lambda(\Sigma)$ in \eqref{required properties} can be dropped if one replaces \eqref{growth with error 2} by the stronger condition \label{no far outlying}
	$$
	\sum_{i=1}^3x^i\,\partial_i(|x|^2\,R)\leq 0;
	$$
	see \cite[Theorem 11]{acws}. 
\end{rema}
\begin{rema}
	Note that \eqref{r lower bound} follows from \eqref{growth with error 2}.
\end{rema} 
\begin{rema}  Comparing Theorem \ref{acw existence} and Theorem \ref{general existence thm}, it is tempting to conjecture that the asymptotic family $\{\Sigma(\kappa):\kappa\in(0,\kappa_0)\}$ from Theorem \ref{general existence thm} forms a foliation. A closer analysis shows that the foliation property of this family depends on the asymptotic behavior of the scalar curvature in a  delicate way. We plan to investigate this dependence in a future paper.
\end{rema}
\begin{figure}
	\centering
	\fbox{\begin{varwidth}{0.31\textwidth}
	\begin{subfigure}{\textwidth}
	\includegraphics[width=1\linewidth]{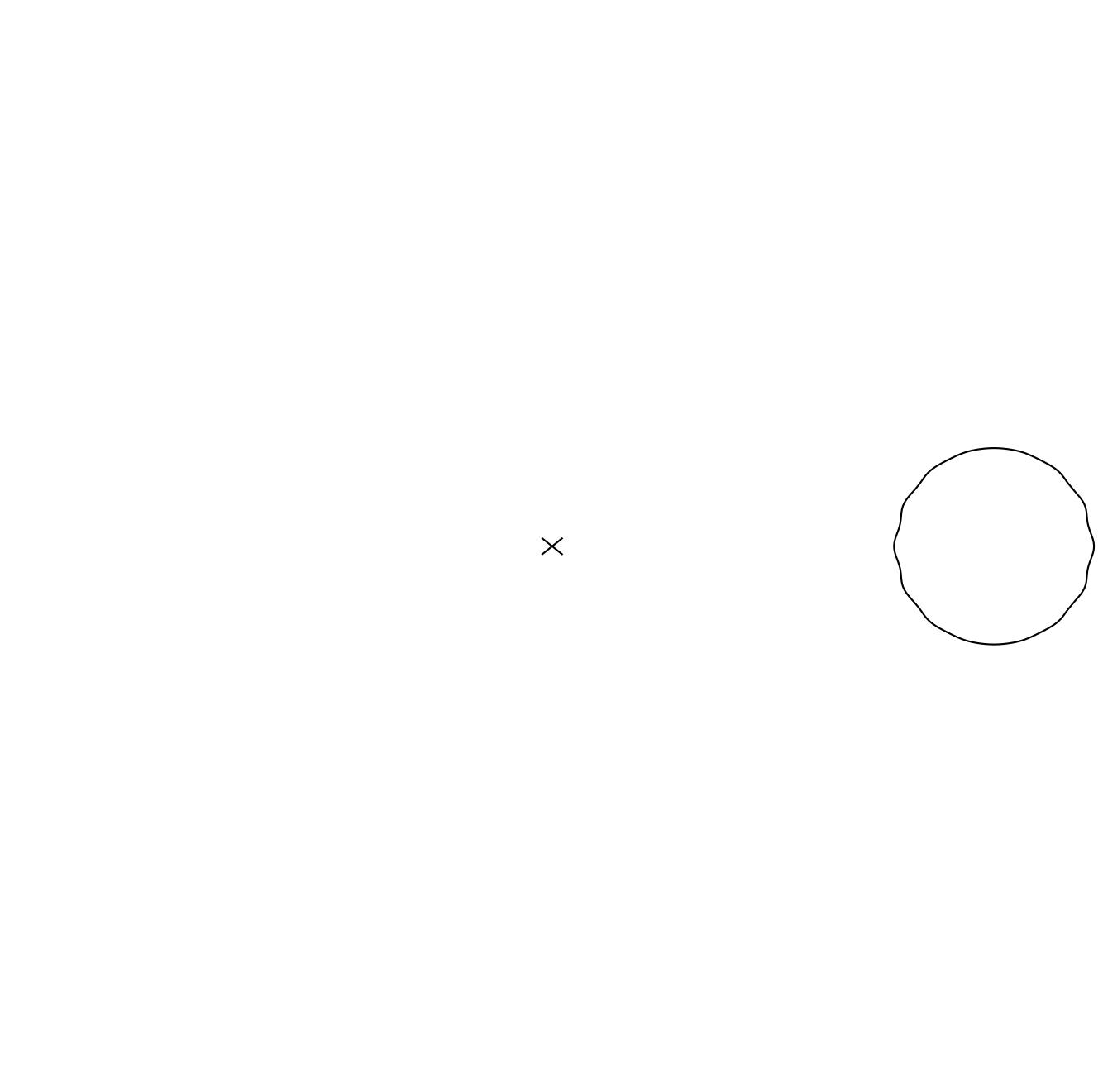}
\end{subfigure}
\end{varwidth}}
\hspace{-7.7pt}
	\fbox{\begin{varwidth}{0.31\textwidth}
		\begin{subfigure}{\textwidth}
			
			\includegraphics[width=1\linewidth]{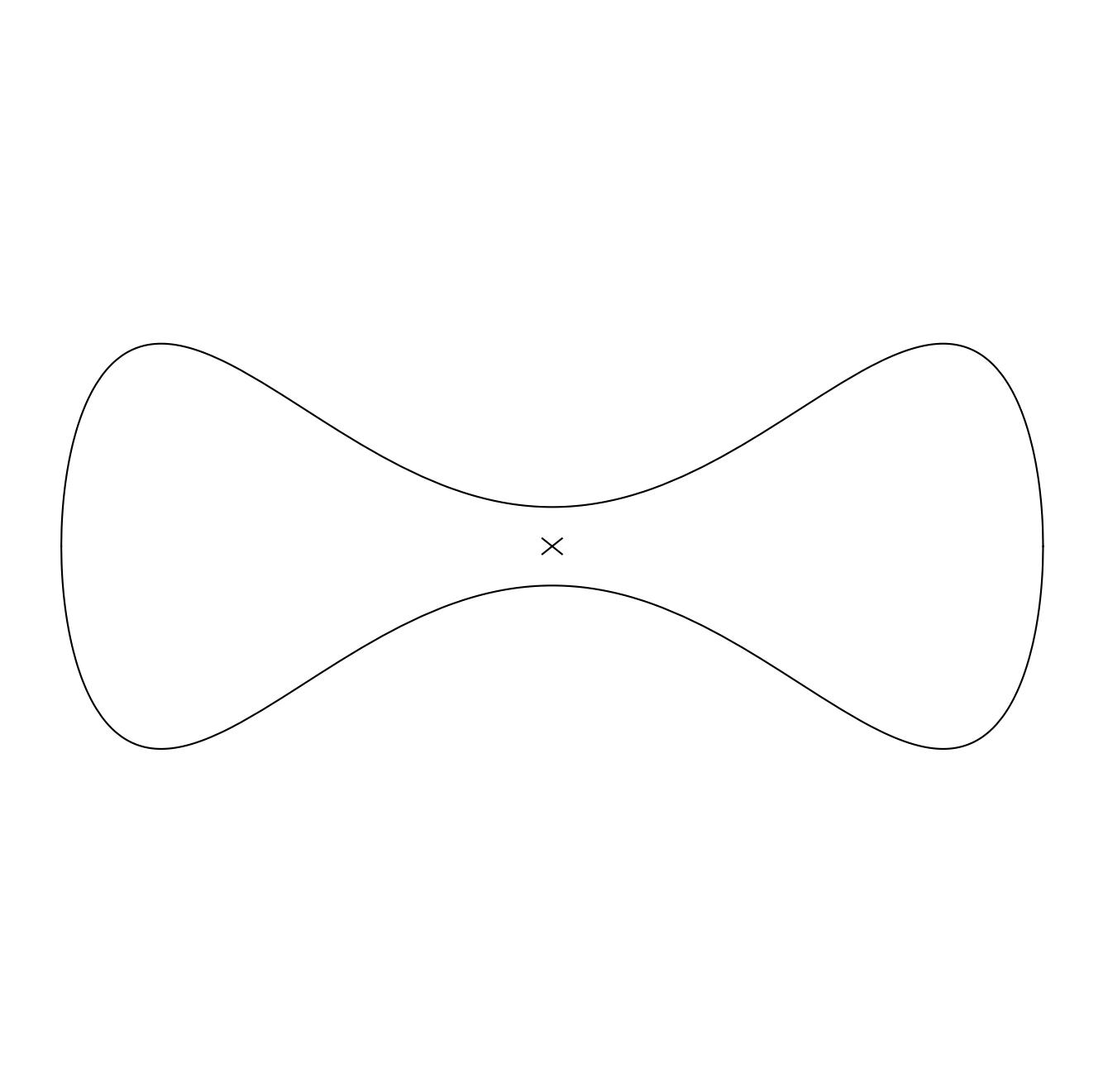}
		\end{subfigure}
\end{varwidth}}
\hspace{-7.8pt}
	\fbox{\begin{varwidth}{0.31\textwidth}
		\begin{subfigure}{\textwidth}
			
			\includegraphics[width=1\linewidth]{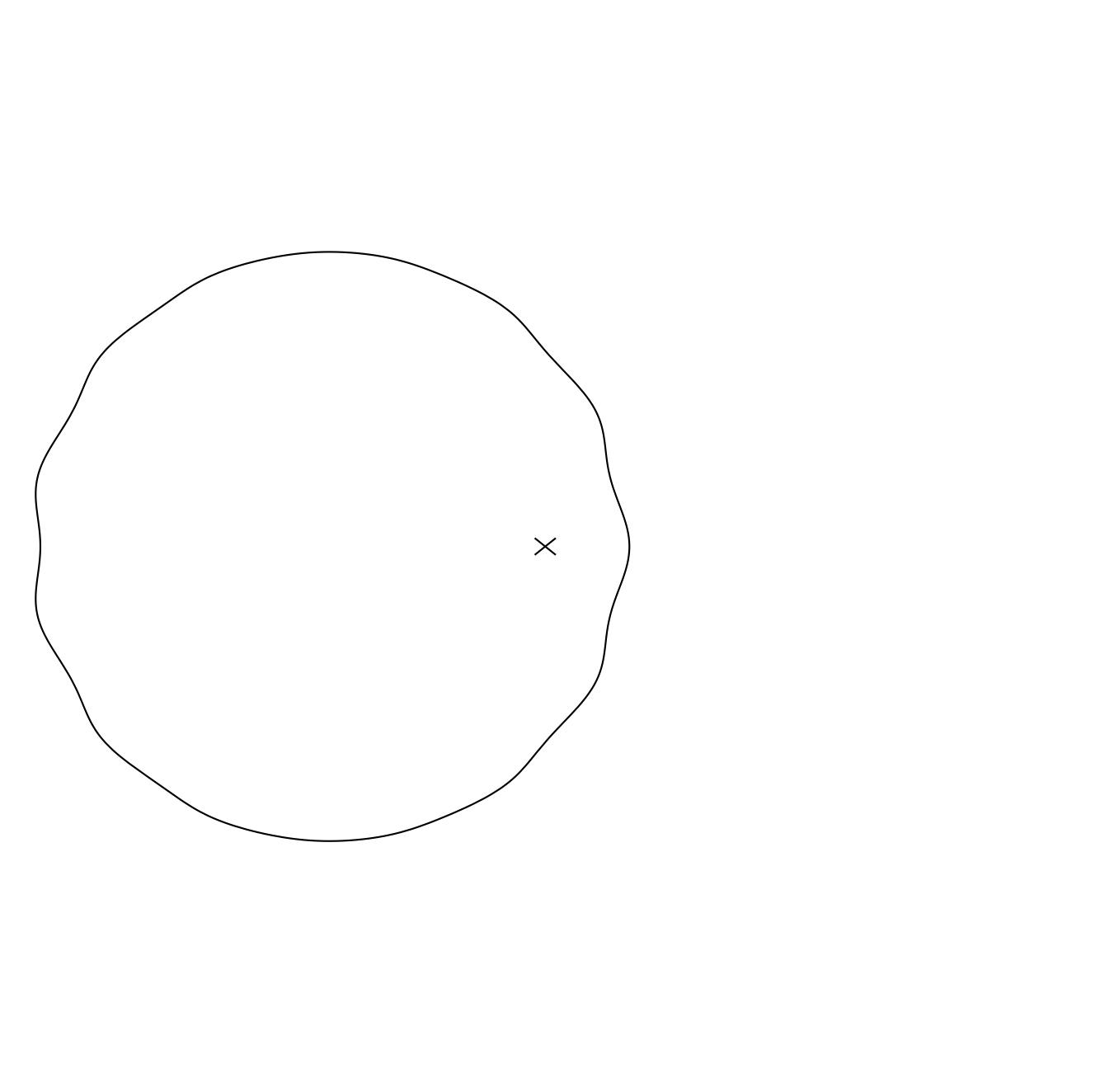}
		\end{subfigure}
\end{varwidth}}

	\caption{An illustration of the assumptions \eqref{required properties} in the uniqueness statement of Theorem \ref{general existence thm}. The cross indicates the origin in the asymptotically flat chart. The surface on the left violates the assumption $\rho(\Sigma)<4\,\lambda(\Sigma)$. The surface on the right violates the assumption $\lambda(\Sigma)<4\,\rho(\Sigma)$. The surface in the middle violates the small energy assumption.}
	\label{Figure alternatives}
\end{figure}
The assumptions on the scalar curvature in Theorem \ref{general existence thm} cannot be relaxed. On the one hand, the uniqueness statement fails if  assumption \eqref{growth with error 2} is dropped; see \cite[Theorem 13]{acws}. On the other hand, we show in the following that the existence of large area-constrained Willmore spheres with  comparable area radius and inner radius as well as small energy cannot be guaranteed if the scalar curvature is allowed to change signs.
\begin{thm} There exists a Riemannian $3$-manifold $(M,g)$ that is $C^k$-asymptotic to Schwarzschild with mass $m=2$ for every $k\geq 2$ with the following property. There exists no family $\{\Sigma(\kappa):\kappa\in(0,\kappa_0)\}$ of area-constrained Willmore spheres $\Sigma(\kappa)$ that enclose $B_2$ and  satisfy \eqref{constrained Willmore quantity}  with parameter $\kappa$ such that \label{existence counter}
$$
\lim_{\kappa\to0} \rho(\Sigma(\kappa))=\infty,\qquad 
\limsup_{\kappa\to0} \rho(\Sigma(\kappa))^{-1}\,\lambda(\Sigma(\kappa))<\infty,\qquad \text{and}\qquad \lim_{\kappa\to0} \int_{\Sigma(\kappa)}|\hcirc|^2\,\mathrm{d}\mu=0.
$$
\end{thm}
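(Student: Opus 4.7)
My plan is to construct the counterexample by prescribing the scalar curvature of $g$ to have an asymmetric profile at infinity that violates the lower bound \eqref{r lower bound}, and then to derive a contradiction from the reduced Lyapunov--Schmidt equation satisfied by the translation vector of any hypothetical family $\Sigma(\kappa)$. Concretely, I would start from the spatial Schwarzschild metric $g_{\mathrm{Schw}}$ of mass $m = 2$ and add to it a symmetric two-tensor $\sigma$ with $\partial_J\sigma = O(|x|^{-2-|J|})$ for every multi-index $J$, engineered so that the scalar curvature of $g = g_{\mathrm{Schw}} + \sigma$ satisfies
$$R(x) = -|x|^{-4}\,\psi(x/|x|) + O(|x|^{-5}) \qquad \text{as } x \to \infty$$
for a smooth function $\psi$ on $S^2$ to be specified. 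Such a $\sigma$ can be produced by prescribing its conformal part and solving a linear elliptic equation in the asymptotic end, in the spirit of the constructions of \cite{BeigOMurchada} and \cite{Huang3}. The resulting $g$ is $C^k$-asymptotic to Schwarzschild of mass $2$ for every $k \geq 2$, and $R$ violates \eqref{r lower bound} in every direction where $\psi > 0$.

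Assuming for contradiction that a family $\{\Sigma(\kappa) : \kappa \in (0,\kappa_0)\}$ as in the theorem exists, the combination of $\rho(\Sigma(\kappa)) \to \infty$, the ratio bound on $\lambda/\rho$, the small-energy condition, and the enclosure $B_2 \subset \Sigma(\kappa)$ allow me to invoke the first stage of the Lyapunov--Schmidt analysis from \cite{acws}: for $\kappa$ sufficiently small, $\Sigma(\kappa)$ is a normal $C^{2,\alpha}$-small graph over a centered coordinate sphere $S_{\lambda(\kappa)}(z(\kappa))$ of the Schwarzschild background, with $\lambda(\kappa) \to \infty$ and $|z(\kappa)| \leq (1 - c_0)\,\lambda(\kappa)$ for some uniform $c_0 > 0$. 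This step is insensitive to the sign of $R$. Testing the constrained Willmore equation \eqref{constrained Willmore quantity} against the three approximate translation modes then produces a finite-dimensional reduced equation for $z(\kappa)$. Expanding along the lines of the existence proof of Theorem \ref{acw existence}, the two leading contributions are a pure Schwarzschild term $A(z/\lambda)$ of order $\lambda^{-2}$, vanishing only at $z = 0$, and an $R$-induced term $\Psi(z/\lambda)$ of order $\lambda^{-1}$ obtained as an explicit spherical-moment integral of $\psi$. Since $\Psi$ dominates $A$ for $\lambda$ large, the reduced equation collapses, to leading order, to $\Psi(z(\kappa)/\lambda(\kappa)) = o(\lambda^{-1})$. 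The map $\psi \mapsto \Psi$ is linear with rich range, so I can choose $\psi$, for instance a well-chosen odd low-frequency function on $S^2$, so that $\Psi$ is uniformly bounded away from zero on the closed ball $\{|\xi| \leq 1 - c_0\}$. This contradicts the reduced equation and completes the argument.

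The main obstacle is the derivation of the reduced equation \emph{uniformly} over the large admissible region of centers $\{|\xi| \leq 1 - c_0\}$, rather than only in a neighborhood of an unperturbed center. This requires uniform control of the right inverse of the linearization of the constrained Willmore operator over the whole family of coordinate spheres $S_\lambda(z)$ with $|z| \leq (1 - c_0)\,\lambda$, and matching uniform bounds on the nonlinear error in the Lyapunov--Schmidt decomposition. These can be obtained by adapting the weighted estimates of \cite[Sections 3--4]{acws} to annular regions $\rho < |x| < C\rho$, but the book-keeping is substantially heavier than in the punctured-neighborhood setting studied there, and the selection of $\psi$ must be made to match the explicit form of the moment kernel generating $\Psi$.
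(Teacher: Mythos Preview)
Your proposal has a decisive gap in the scaling analysis of the reduced equation. You assert that the Schwarzschild term $A(z/\lambda)$ is of order $\lambda^{-2}$ while the $R$-induced term $\Psi(z/\lambda)$ is of order $\lambda^{-1}$, so that $\Psi$ dominates and it suffices to choose $\psi$ with $\Psi$ bounded away from zero. This is incorrect: the hypothesis $\partial_J\sigma = O(|x|^{-2-|J|})$ forces $R = O(|x|^{-4})$, and with this decay both contributions to the reduced gradient enter at the \emph{same} order. In the normalization of Lemma~\ref{G lemma} and \eqref{G decomposition}, the Schwarzschild contribution $\bar D G_1(\xi)$ is $\lambda$-independent, while $\bar D G_{2,\lambda}(\xi) = -2\lambda^2\int_{S_{\xi,\lambda}} R\,\bar\nu\,\mathrm{d}\bar\mu$ scales as $\lambda^2\cdot\lambda^{-4}\cdot\lambda^2 = O(1)$ when $R\sim|x|^{-4}$. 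Neither term dominates the other, and arranging $\Psi\neq 0$ alone yields no contradiction: you must ensure that the \emph{sum} $\bar D G_1+\bar D G_{2,\lambda}$ has no zero in $\{|\xi|\le 1-\delta\}$ for every $\delta>0$ (the constant $c_0$ depends on the hypothetical family, not on the metric, so your $\psi$ would have to work for all $\delta$).

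This is precisely where the difficulty lies, and your exactly homogeneous profile $R\sim -|x|^{-4}\psi(x/|x|)$ does not meet it. The field $\bar D G_1$ is radial, vanishes at $\xi=0$, and has three distinct singular terms $(1-|\xi|)^{-2}$, $(1-|\xi|)^{-1}$, $\log(1-|\xi|)$ as $|\xi|\to 1$; see \eqref{dg1 counter intro}. A degree argument shows that adding a perturbation whose singular structure does not match these will still leave a zero somewhere in the open unit ball. The paper's construction is accordingly much more specific than yours: it takes a conformal metric $g=(1+|x|^{-1}+\tfrac12\eta)^4\bar g$ with $\eta$ supported in a sequence of widening annuli and built from \emph{scale-dependent} blocks
\[
a_1|x|^{-2}+a_2\lambda_k^{-1}|x|^{-1}\log(\lambda_k/|x|)+a_3\lambda_k^{-2}\log(|x|/\lambda_k)+a_4\lambda_k^{-5}(x^3)^3,
\]
so that at each scale $\lambda_k$ the contribution $\bar D G_{2,\lambda_k}$ carries three independently tunable singular coefficients; see \eqref{dg2 counter intro}. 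Choosing $(a_1,a_2,a_3)=(1,4,4)$ cancels all three singular terms of $\bar D G_1$ exactly, and the odd term governed by $a_4$ then shifts the bounded remainder by a large constant vector, yielding $\bar D G_{\lambda_k}(\xi)=64\pi a_4 e_3+O(1)$ with no zero. Your ``well-chosen odd low-frequency $\psi$'' provides no such cancellation mechanism. Finally, the uniform Lyapunov--Schmidt reduction over $\{|\xi|\le 1-\delta\}$ that you flag as the main obstacle is not one here: it is already established in \cite{acws} and the paper simply invokes it.
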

Theorem \ref{general existence thm} and Remark \ref{no far outlying} do not preclude the possibility of a sequence $\{\Sigma_i\}_{i=1}^\infty$ of large area-constrained Willmore spheres $\Sigma_i\subset M$ with
$$
\lim_{i\to\infty} \int_{\Sigma_i} |\hcirc|^2\,\mathrm{d}\mu=0
$$
that are slowly divergent in the sense that $$
\lim_{i\to\infty} \rho(\Sigma_i)=\infty\qquad\text{but}\qquad \lim_{i\to\infty} \rho(\Sigma_i)^{-1}\,\lambda(\Sigma_i)=\infty.
$$ 
As we discuss in \cite{acws}, it is a challenging analytical problem to rule out such sequences. Theorem \ref{slow divergence counter} below confirms that the existence of such a sequence hinges on the asymptotic behavior of the scalar curvature, too.   It should be compared with the uniqueness result obtained by J.~Qing and G.~Tian in \cite{qing2007uniqueness} for large stable constant mean curvature spheres.
\begin{thm}
	There exists a Riemannian $3$-manifold $(M,g)$ that is $C^k$-asymptotic to Schwarzschild with mass $m=2$ for every $k\geq 2$ such that the following holds. There exists a sequence $\{\Sigma_i\}_{i=1}^\infty$ of area-constrained Willmore spheres $\Sigma_i\subset M$ enclosing $B_2$ such that \label{slow divergence counter}
	$$
	\lim_{i\to\infty} \rho(\Sigma_i)=\infty,
	$$
	$\lambda(\Sigma_i)^{-1}\,\Sigma_i$ converges smoothly to a round sphere, but
	$$
	\lim_{i\to\infty} \rho(\Sigma_i)^{-1}\,\lambda_i(\Sigma_i)=\infty$$ and
	$m_H(\Sigma_i)>2$
	for every $i$.
	
\end{thm}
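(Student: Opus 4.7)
I plan to construct $(M,g)$ as an explicit perturbation of the Schwarzschild metric of mass $m=2$ whose scalar curvature is designed to support large area-constrained Willmore spheres sitting off to one side of the origin, and to exhibit the sequence $\{\Sigma_i\}$ via the Lyapunov-Schmidt reduction underlying Theorem \ref{general existence thm}.

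\emph{Choice of the metric.} Fix a direction $e\in S^2$ and sequences $\lambda_i\to\infty$, $\rho_i\to\infty$ with $\rho_i/\lambda_i\to 0$, and set $p_i:=(\lambda_i-\rho_i)\,e$, so that the Euclidean sphere $S_{\lambda_i}(p_i)$ has inner radius $\rho_i$. I take $g=u^4\,\bar g$ with $u(x)=1+1/|x|+\sum_{i=1}^{\infty}\phi_i(x)$, where each $\phi_i$ is a smooth bump supported in an annular neighborhood of $S_{\lambda_i}(p_i)$ of thickness comparable to $\rho_i$. The amplitudes of the $\phi_i$ are chosen so that $\sigma:=g-(1+1/|x|)^4\,\bar g$ satisfies $\partial_J\sigma=O(|x|^{-3-|J|})$ for every multi-index $J$, so that $(M,g)$ is $C^k$-asymptotic to Schwarzschild of mass $2$ for every $k$, and so that the scalar-curvature bump near $S_{\lambda_i}(p_i)$ produces a nondegenerate local critical point of the reduced Willmore potential in a neighborhood of $(p_i,\lambda_i)$.

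\emph{Producing the surfaces and the Hawking mass bound.} Following the reduction used to prove Theorem \ref{general existence thm}, I look for $\Sigma_i$ as a normal graph over $S_{\lambda_i}(p_i)$. The constrained Willmore equation \eqref{constrained Willmore quantity} becomes a contraction in the complement of a four-dimensional approximate kernel generated by translations and the conformal scaling of the round sphere, reducing the problem to finding critical points of an effective potential $\mathcal{F}_i$ on $\mathbb{R}^3\times\mathbb{R}_+$. The bump $\phi_i$ provides such a critical point near $(p_i,\lambda_i)$, which yields $\Sigma_i$. The properties $\rho(\Sigma_i)\to\infty$, $\lambda(\Sigma_i)/\rho(\Sigma_i)\to\infty$, and smooth convergence of $\lambda(\Sigma_i)^{-1}\Sigma_i$ to $S_1(e)$ then follow directly from the normal-graph representation. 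The bound $m_H(\Sigma_i)>2$ is arranged by choosing the sign of $\phi_i$ so that its contribution to the scalar curvature enclosed by $\Sigma_i$ is definite and, via the expansion of the Hawking mass for nearly-round large spheres in perturbed Schwarzschild, boosts $m_H(\Sigma_i)$ past the ADM mass.

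\emph{Main obstacle.} The principal difficulty is that the regime $|p_i|/\lambda_i\to 1$ lies outside the scope of the Lyapunov-Schmidt setup of \cite{acws} and of the proof of Theorem \ref{general existence thm}: as $\rho_i/\lambda_i\to 0$, the sphere $S_{\lambda_i}(p_i)$ passes arbitrarily close to the origin, causing the uniform mapping estimates for the linearized constrained Willmore operator to degenerate and the usual expansion of the reduced potential in powers of $|p|/\lambda$ to break down. Overcoming this requires anisotropic function spaces adapted to the near-tangency configuration, together with a delicate calibration of the amplitudes of the $\phi_i$ so that their contribution to $\mathcal{F}_i$ dominates the increasingly sensitive Schwarzschild background contributions near tangency while remaining compatible with the decay requirement $\partial_J\sigma=O(|x|^{-3-|J|})$.
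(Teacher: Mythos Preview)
You have correctly diagnosed the central difficulty: the target regime $\rho(\Sigma_i)/\lambda(\Sigma_i)\to 0$ corresponds to $|\xi|\to 1$ in the Lyapunov-Schmidt reduction, where the uniform estimates of \cite{acws} degenerate. But your proposed cure---anisotropic function spaces and a direct attack on the near-tangency regime---is neither substantiated nor what the paper does, and there is a concrete reason to doubt it can work as stated. Your decay requirement $\partial_J\sigma=O(|x|^{-3-|J|})$ forces the scalar-curvature perturbation to be $O(|x|^{-5})$, so its contribution to the reduced function $G_{2,\lambda}$ is $O(\lambda^{-1})$; this is far too small to compete with the Schwarzschild part $G_1$, which blows up like $(1-|\xi|)^{-2}$ as $|\xi|\nearrow 1$. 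Localized bumps at this decay rate cannot create the required critical points.

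The paper sidesteps the obstacle altogether by a two-stage diagonal argument. First (Lemma~\ref{sd lemma}), for each fixed integer $k$ one builds a \emph{rotationally symmetric} conformal metric $g_k=(1+|x|^{-1}+\tfrac12\sum_\ell\eta_{k,\ell})^4\bar g$ with $\sigma=O(|x|^{-2})$, where the radial profiles $\eta_{k,\ell}$ are chosen with specific coefficients so that the scalar-curvature contribution $G_{2,\lambda_{k,\ell}}$ exactly cancels the leading singular terms of $G_1$ near $|\xi|=1$; comparing \eqref{dg1 counter intro} and \eqref{dg2 counter intro} one gets $\sum_j\xi^j\partial_j(G_1+G_{2,\lambda_{k,\ell}})(\xi)=-16\pi(1-|\xi|)^{-2}+O(1)$. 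Together with the vanishing of $R$ on suitable intermediate annuli, this traps a local minimum of $G_{\lambda_{k,\ell}}$ in the shell $1-2k^{-2}<|\xi|<1-\delta(k)$, still within the range where the standard reduction of \cite{acws} applies. The Hawking-mass bound $m_H>2$ drops out from $G_{\lambda_{k,\ell}}(\xi)<0$ at the minimum via the definition \eqref{G definition}. Second, one lets $k\to\infty$ and glues the metrics $g_k$ together with cutoffs so that $g=g_k$ near the $k$-th sphere; the surfaces $\Sigma_k$ survive unchanged, and now $\rho(\Sigma_k)/\lambda(\Sigma_k)<k^{-1}\to 0$. No new analysis in the degenerate regime is ever needed.
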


\subsection*{Outline of the paper} In order to prove Theorems \ref{acwt1}, \ref{com counterexample 2}, \ref{general existence thm}, \ref{existence counter}, and \ref{slow divergence counter}, we refine the Lyapunov-Schmidt analysis developed in our recent paper \cite{acws}. The method of Lyapunov-Schmidt analysis has previously been used by  S.~Brendle and the first-named author \cite{brendle2014large} and by O.~Chodosh and the first-named author \cite{chodoshfar} to study large stable constant mean curvature speres in Riemannian $3$-manifolds asymptotic to Schwarzschild. Contrary to the area-functional under a volume constraint, the Willmore energy under an area constraint  is translation invariant up to lower-order terms in exact Schwarzschild; see Lemma \ref{G lemma}. New difficulties owing to the competing contributions of the Schwarzschild background respectively the lower-order perturbation of the metric $\sigma$ arise when studying the center of mass of large area-constrained Willmore spheres.
 \\ \indent By scaling, we may assume throughout that $m=2$. Geometric computations are performed in the asymptotically flat chart \eqref{schwarzschild sigma decay}.  We use a bar to indicate that a geometric quantity has been computed with respect to the Euclidean background metric $\bar g$. When the Schwarzschild metric
$$
g_S=(1+|x|^{-1})^4\,\bar g
$$
with mass $m=2$ has been used in the computation, we use the subscript $S$.
\\ \indent 
Let $\delta\in(0,1/2)$. In \cite{acws}, we have used the implicit function theorem to construct surfaces $\Sigma_{\xi,\lambda}$ as perturbations of $S_\lambda(\lambda\,\xi)$ where $\xi\in\mathbb{R}^3$ with $|\xi|<1-\delta$ and $\lambda>1$ is large   such that $|\Sigma_{\xi,\lambda}|=4\,\pi\,\lambda^2$ and $\Sigma_{\xi,\lambda}$ is an area-constrained Willmore surface if and only if $\xi$ is a critical point of the function $G_\lambda$ defined by
$$G_\lambda(\xi)=\lambda^2\bigg(\int_{\Sigma_{\xi,\lambda}} H^2\,\mathrm{d}\mu-16\,\pi+64\,\pi\,\lambda^{-1}\bigg).
$$
We have also proven that 
$$
G_\lambda(\xi)=G_1(\xi)
+G_{2,\lambda}(\xi)+ G_{3,\lambda}(\xi)
$$
where $G_1$ is a rotationally symmetric and strictly convex function independent of $\lambda$, 
$$
G_{2,\lambda}(\xi)= 2\,\lambda\int_{\mathbb{R}^3\setminus{B_{\lambda}(\lambda\,\xi)}} R\,\mathrm{d}\bar{v},
$$
and $G_{3,\lambda}=O(\lambda^{-1})$ as $\lambda\to\infty$.  Here, $R$ is the scalar curvature of $(M,g)$. We refer to Appendix \ref{LS appendix} for more details on this construction. \\
\indent Under the assumptions of Theorem \ref{acw existence}, we have shown in \cite{acws} that the function $G_\lambda$ has a unique critical point $\xi(\lambda)\in\mathbb{R}^3$ with $\xi(\lambda)=o(1)$ as $\lambda\to\infty$. The sphere $\Sigma_{\xi(\lambda),\lambda}$ corresponds to a leaf  $\Sigma(\kappa)$ of the foliation \eqref{acw foliation} for suitable $\kappa=\kappa(\lambda)$. Moreover, we observe that
\begin{align*}
\lambda\,\xi(\lambda)=|\Sigma(\kappa(\lambda))|^{-1}\int_{\Sigma(\kappa(\lambda))} x^\ell\,\mathrm{d}\mu+O(\lambda^{-1}).
\end{align*}
On the one hand, we compute here that $\lambda\,(\bar DG_{3,\lambda})({\xi(\lambda)})$ is essentially proportional to the Hamiltonian center of mass $C$ provided $\lambda>1$ is sufficiently large. On the other hand, we prove that
$
\lambda\,(\bar D G_{2,\lambda})({\xi(\lambda)})
$
is small if the scalar curvature satisfies the assumptions of Theorem \ref{acwt1}. Since $\xi(\lambda)$ is a critical point of $G_\lambda$,  this proves Theorem \ref{acwt1}. By contrast, we show by explicit example  that $\lambda\,(\bar D G_{2,\lambda})({\xi(\lambda)})$ need not converge as $\lambda\to\infty$ if the assumptions on the scalar curvature are relaxed only slightly. This proves Theorem \ref{com counterexample 2}. \\
\indent To prove Theorem \ref{general existence thm}, we use a geometric argument to show that the function $G_{2,\lambda}$ is convex if the scalar curvature satisfies the growth condition \eqref{growth with error 2}. In particular, the function $G_\lambda$ has a critical point $\xi(\lambda)$  that is unique among all $\xi\in\mathbb{R}^3$ with $|\xi|<1-\delta$ provided $\lambda>1$ is sufficiently large. By contrast, we construct a metric whose scalar curvature changes signs such that for some sequence $\{\lambda_i\}_{i=1}^\infty$ with $\lim_{i\to\infty}\lambda_i=\infty$ and every $\delta>0$ there are infinitely many $i$ for which the function $G_{\lambda_i}$ has no critical points $\xi\in\mathbb{R}^3$ with $|\xi|<1-\delta$. Likewise, we construct a metric such that for every $\delta>0$ there are infinitely many $i$ for which $G_{\lambda_i}$ has a critical point $\xi_i \in\mathbb{R}^3$ with $1-\delta<|\xi_i|<1$. This proves Theorems \ref{existence counter} and \ref{slow divergence counter}.  
\subsection*{Acknowledgments} The authors would like to thank Otis Chodosh, Jan Metzger, and Felix Schulze for helpful discussions.  The authors acknowledge the support of the START-Project Y963 of the Austrian Science Fund (FWF). This version of the article has been accepted for publication after peer review
but is not the Version of Record and does not reflect post-acceptance improvements, or any
corrections. The Version of Record is available online at: \url{http://dx.doi.org/10.1007/s00220-022-04349-2}.
\section{Proof of Theorem \ref{acwt1} and Theorem \ref{com counterexample 2}}
 Throughout this section, we  assume that $(M,g)$ is $C^4$-asymptotic to Schwarzschild with mass $m=2$ and scalar curvature $R$ satisfying \label{com section}
\begin{align} \sum_{i=1}^3x^i\,\partial_i(| x|^2\, R)&\leq O(|x|^{-3}), \notag
\\  R(x)-R(-x)&=O(|x|^{-5}) \label{weaker center 2.2}.
\end{align}
\indent To prove Theorem \ref{acwt1}, we expand upon the Lyapunov-Schmidt analysis developed in our recent work \cite{acws}. The required concepts and estimates are summarized in Appendix \ref{LS appendix}. 
\\ \indent Let $\delta=1/4$, $\lambda_0>1$ be the constant from Proposition \ref{LS prop}, and $\xi\in\mathbb{R}^3$ with $|\xi|<3/4$. Recall the definitions \eqref{G definition}  of the function $G_\lambda$, \eqref{sigma xi lambda def} of the surface $\Sigma_{\xi,\lambda}$,  and \eqref{spheres def} of the sphere $S_{\xi,\lambda}$. Let $\xi(\lambda)$ be the unique critical point of $G_\lambda$ with
$
|\xi(\lambda)|<3/4
$ whose existence is asserted in Proposition \ref{existence prop}.\\ \indent Recall the Lagrange parameter $\kappa$ defined in \eqref{kappa function}. It follows from Proposition \ref{existence prop}, Proposition \ref{LS prop}, and Remark \ref{kappa remark} that 
$\Sigma_{\xi(\lambda),\lambda}
$ 
is the area-constrained Willmore sphere $\Sigma(\kappa)$ from \eqref{acw foliation} with $\kappa=\kappa(\Sigma(\lambda))$.
\begin{lem} There holds, as $\lambda\to\infty$,  \label{xi estimate}
	$$
	\xi(\lambda)=O(\lambda^{-1}).
	$$ 
\end{lem}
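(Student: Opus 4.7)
The plan is to exploit the decomposition $G_\lambda=G_1+G_{2,\lambda}+G_{3,\lambda}$ recalled in Appendix~\ref{LS appendix} and to take the inner product of the critical-point identity $\bar DG_\lambda(\xi(\lambda))=0$ with $\xi(\lambda)$ itself, producing
\[
\bar DG_1(\xi(\lambda))\cdot\xi(\lambda)=-\bar DG_{2,\lambda}(\xi(\lambda))\cdot\xi(\lambda)-\bar DG_{3,\lambda}(\xi(\lambda))\cdot\xi(\lambda).
\]
Since $G_1$ is rotationally symmetric and strictly convex with minimum at the origin, there is a constant $c>0$ with $\bar DG_1(\xi)\cdot\xi\geq c\,|\xi|^2$ for $|\xi|<3/4$. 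To prove the lemma, it therefore suffices to bound each summand on the right by $C\,\lambda^{-1}\,|\xi(\lambda)|$.

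For $G_{3,\lambda}$, the uniform $C^1$-estimate $|\bar DG_{3,\lambda}(\xi)|=O(\lambda^{-1})$ on $|\xi|<3/4$ recorded in Appendix~\ref{LS appendix} gives this bound immediately. For $G_{2,\lambda}$, I first compute $\bar DG_{2,\lambda}(0)$: changing variables by $y=x-\lambda\,\xi$ in
\[
G_{2,\lambda}(\xi)=2\,\lambda\int_{\mathbb{R}^3\setminus B_\lambda(\lambda\,\xi)}R\,\mathrm{d}\bar v,
\]
differentiating at $\xi=0$, and applying the divergence theorem (the decay $R=O(|x|^{-4})$ eliminates the boundary term at infinity) produces
\[
\bar DG_{2,\lambda}(0)\cdot v=-2\,\lambda\int_{\partial B_\lambda(0)}R\,(v\cdot x)\,\mathrm{d}\bar\mu.
\]
Symmetrizing via the substitution $x\mapsto -x$ on the sphere gives
\[
\bar DG_{2,\lambda}(0)\cdot v=-\lambda\int_{\partial B_\lambda(0)}\bigl[R(x)-R(-x)\bigr]\,(v\cdot x)\,\mathrm{d}\bar\mu,
\]
which is $O(|v|\,\lambda^{-1})$ by \eqref{weaker center 2.2} together with $|v\cdot x|\leq\lambda\,|v|$ and $|\partial B_\lambda(0)|=4\,\pi\,\lambda^2$.

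To pass from $\xi=0$ to the critical point $\xi(\lambda)$, I would use that the growth condition $\sum_{i=1}^3x^i\,\partial_i(|x|^2\,R)\leq O(|x|^{-3})$ imposed at the top of this section is strictly stronger than \eqref{growth with error 2}, and therefore, via the geometric argument used in \cite{acws} to prove Theorem~\ref{general existence thm} and summarized in Appendix~\ref{LS appendix}, implies that $G_{2,\lambda}$ is convex on $\{\xi\in\mathbb{R}^3:|\xi|<3/4\}$. The supporting-hyperplane inequality at $\xi=0$, combined with the bound on $\bar DG_{2,\lambda}(0)$, yields
\[
\bar DG_{2,\lambda}(\xi(\lambda))\cdot\xi(\lambda)\geq\bar DG_{2,\lambda}(0)\cdot\xi(\lambda)\geq -C\,\lambda^{-1}\,|\xi(\lambda)|.
\]
Inserting these estimates into the critical-point identity gives $c\,|\xi(\lambda)|^2\leq C\,\lambda^{-1}\,|\xi(\lambda)|$, whence $|\xi(\lambda)|=O(\lambda^{-1})$.

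The technical crux is the improvement from the naive $O(1)$ to $O(\lambda^{-1})$ in the bound on $\bar DG_{2,\lambda}(0)$: without the antisymmetry hypothesis \eqref{weaker center 2.2}, the pointwise decay $R=O(|x|^{-4})$ alone would only yield $|\bar DG_{2,\lambda}(0)|=O(1)$, and the argument would fail to produce any rate in $\lambda$.
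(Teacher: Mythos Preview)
Your high-level strategy coincides with the paper's: dot $\bar DG_\lambda(\xi(\lambda))=0$ against $\xi(\lambda)$, extract a quadratic lower bound from $G_1$, and control the remaining pieces by $O(\lambda^{-1})\,|\xi(\lambda)|$. The paper obtains the key estimate $\bar DG_{2,\lambda}(\xi)\cdot\xi\geq-O(\lambda^{-1})\,|\xi|$ by quoting Lemma~\ref{g der lemma} directly, whereas you try to reprove it by combining convexity of $G_{2,\lambda}$ with the value $\bar DG_{2,\lambda}(0)$.

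Your computation of $\bar DG_{2,\lambda}(0)$ and its antisymmetrization via $x\mapsto-x$ are correct. The gap is in the convexity step. The argument you invoke is the one in Section~3 (not Appendix~\ref{LS appendix}), and it shows that $G_\lambda$, not $G_{2,\lambda}$, is strictly convex: Corollary~\ref{monotone gradient} only yields the monotone-gradient inequality for $R$ up to an error, and as stated that error is $o(\lambda^{-2})$, which after multiplication by $2\,\lambda^2$ leaves a defect of size $o(1)$ in your supporting-hyperplane inequality---too large to conclude $|\xi(\lambda)|=O(\lambda^{-1})$. The repair is to note that the Section~\ref{com section} hypothesis $\sum_i x^i\,\partial_i(|x|^2 R)\leq O(|x|^{-3})$ integrates radially to $R\geq -O(|x|^{-5})$; on the region $|x|\geq\delta\lambda$ this lets you choose the auxiliary parameter in the proof of Corollary~\ref{monotone gradient} to be $\epsilon=O(\lambda^{-1})$ rather than merely $o(1)$, sharpening the defect in the supporting-hyperplane inequality to $O(\lambda^{-1})\,|\xi|$. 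With that correction your route is complete and is in effect a self-contained proof of the second assertion of Lemma~\ref{g der lemma}.
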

\begin{proof}
	Using Lemma \ref{G lemma}, Lemma \ref{g der lemma}, and that $(\bar DG_\lambda)({\xi(\lambda)})=0$, we find 
	$$
	0=|\xi(\lambda)|^{-1}\,\sum_{i=1}^3\xi(\lambda)^i(\partial_i G_\lambda)({\xi(\lambda)})\geq|\xi(\lambda)|^{-1}\,\sum_{i=1}^3\xi(\lambda)^i\,(\partial_i G_{1})({\xi(\lambda)})-O(\lambda^{-1}).
	$$ 
	Using \eqref{G1 definition}, we obtain
	$$
	|\xi(\lambda)|^{-1}\,\sum_{i=1}^3\xi(\lambda)^i\,(\partial_i G_{1})({\xi(\lambda)})\geq 256\,\pi\,|\xi(\lambda)|.
	$$ The assertion of the lemma follows from combining these estimates. 	
\end{proof}

Recall from Proposition \ref{LS prop} that $\Sigma_{\xi,\lambda}=\Sigma_{\xi,\lambda}(u_{\xi,\lambda})$, i.e.~$\Sigma_{\xi,\lambda}$  is the radial graph \eqref{radial graph} of the function $u_{\xi,\lambda}$ over $S_{\xi,\lambda}$.
We define
$$
\tilde u_{\xi,\lambda}=u_{\xi,\lambda}+2
$$
so that $$\Sigma_{\xi,\lambda}=\Sigma_{\tilde \xi,\tilde \lambda}(\tilde u_{\xi,\lambda})$$ 
with
\begin{align} \label{tilde normal relation}
\tilde \lambda = \lambda-2\qquad\text{and}\qquad  \tilde \xi=(\lambda-2)^{-1}\,\lambda\,\xi.
\end{align}
Note that $\lambda\,\xi=\tilde\lambda\,\tilde\xi$.\\ \indent 
Recall the vector field $Z_{\xi,\lambda}$ defined in \eqref{Z definition}. We abbreviate $u_{\xi,\lambda},\,\tilde u_{\xi,\lambda},\, Z_{\xi,\lambda},$ and $Z_{\tilde \xi,\tilde \lambda}$ by $u,\,\tilde u,\,Z$, and $\tilde Z$, respectively. Moreover, we let $\Lambda_0(S_{\tilde \xi,\tilde \lambda})\subset C^\infty(S_{\tilde \xi,\tilde \lambda})$ be the space of constant functions and $\Lambda_0^\perp(S_{\tilde \xi,\tilde \lambda})$ be its orthogonal complement. We abbreviate $\Lambda_0(S_{\tilde \xi,\tilde \lambda})$ by $\Lambda_0$
and $\Lambda_0^\perp(S_{\tilde \xi,\tilde \lambda})$ by $\Lambda_0^\perp$.
\begin{lem} There exists  $\delta\in(0,1/4)$ such that   \label{tilde u estimate}
	\begin{align} \label{tilde u estimate eq} 
	\tilde u=O(|\xi|^2)+O(\lambda^{-1})
	\end{align}
	and,  uniformly for every $\xi\in\mathbb{R}^3$ with $|\xi|<\delta $ as $\lambda\to\infty$,
	\begin{align} \label{tilde u proj}
	\operatorname{proj}_{\Lambda_0}\tilde u=-\lambda^{-1}-\frac{1}{16\,\pi}\,\lambda^{-1}\int_{S_{\xi,\lambda}}[\bar{\operatorname{tr}}\,\sigma-\sigma(\bar\nu,\bar\nu)]\,\mathrm{d}\bar\mu+O(\lambda^{-2})+O(\lambda^{-1}\,|\xi|^2).
	\end{align}
	 These expansions may be differentiated once with respect to $\xi$.
\end{lem}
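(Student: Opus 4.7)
The plan is to prove \eqref{tilde u estimate eq} and \eqref{tilde u proj} separately by combining the estimate on $u_{\xi,\lambda}$ furnished by the Lyapunov-Schmidt construction recorded in Proposition \ref{LS prop} with a direct expansion of the area constraint $|\Sigma_{\xi,\lambda}|_g=4\,\pi\,\lambda^2$, viewing $\Sigma_{\xi,\lambda}$ as the radial graph of $\tilde u$ over $S_{\tilde\xi,\tilde\lambda}$.

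For the size estimate \eqref{tilde u estimate eq}, the key observation is that in pure Schwarzschild ($\sigma\equiv 0$) with $\xi=0$, the area constraint $(1+r^{-1})^4\cdot 4\,\pi\,r^2=4\,\pi\,\lambda^2$ forces the Euclidean radius to satisfy $r=\lambda-2+O(\lambda^{-1})$, so $u_{0,\lambda}=-2+O(\lambda^{-1})$ and hence $\tilde u_{0,\lambda}=O(\lambda^{-1})$. For $\xi\neq 0$, the translation invariance of the Willmore functional in Schwarzschild recorded in Lemma \ref{G lemma} shows that, to leading order, $\Sigma_{\xi,\lambda}$ is the translate of $\Sigma_{0,\lambda}$ by $\lambda\,\xi$, and this translation is exactly absorbed by the passage from $S_{0,\tilde\lambda}$ to $S_{\tilde\xi,\tilde\lambda}$. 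Thus, in pure Schwarzschild, $\tilde u=O(\lambda^{-1})$ with no $\xi$-dependence. Turning on $\sigma=O(|x|^{-2})$ directly contributes at most $O(\lambda^{-1})$ and, via the breaking of translation invariance, at most $O(|\xi|^2)$ through the smooth $\xi$-dependence built into the implicit function theorem, yielding \eqref{tilde u estimate eq}.

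For the projection formula \eqref{tilde u proj}, I would linearize the $g$-area of the graph of $\tilde u$ over $S_{\tilde\xi,\tilde\lambda}$ at $\tilde u=0$:
\begin{align*}
|\Sigma_{\xi,\lambda}|_g=|S_{\tilde\xi,\tilde\lambda}|_{g_S}+\tfrac12\int_{S_{\tilde\xi,\tilde\lambda}}[\bar{\operatorname{tr}}\,\sigma-\sigma(\bar\nu,\bar\nu)]\,\mathrm{d}\bar\mu+\int_{S_{\tilde\xi,\tilde\lambda}}H_S\,\tilde u\,\mathrm{d}\mu_{g_S}+\mathrm{h.o.t.}
\end{align*}
The elementary identity $|S_{0,\tilde\lambda}|_{g_S}=4\,\pi\,(\tilde\lambda+1)^4/\tilde\lambda^2=4\,\pi\,\lambda^2+8\,\pi+O(\lambda^{-1})$, together with the evenness in $\xi$ of $|S_{\tilde\xi,\tilde\lambda}|_{g_S}$ (which makes the linear-in-$\xi$ correction vanish), gives $|S_{\tilde\xi,\tilde\lambda}|_{g_S}=4\,\pi\,\lambda^2+8\,\pi+O(\lambda^{-1})+O(|\xi|^2\,\lambda)$. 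Using $\int_{S_{\tilde\xi,\tilde\lambda}}H_S\,\mathrm{d}\mu_{g_S}=8\,\pi\,\lambda+O(\lambda^{-1})+O(|\xi|^2\,\lambda)$, projecting the area expansion onto $\Lambda_0$, using Step 1 to control both the non-constant modes of $\tilde u$ and the quadratic-in-$\tilde u$ remainders, and finally swapping $S_{\tilde\xi,\tilde\lambda}$ for $S_{\xi,\lambda}$ in the $\sigma$-integral at the cost of an $O(\lambda^{-2})$ error (since $\sigma=O(|x|^{-2})$), I arrive at \eqref{tilde u proj}. Differentiability in $\xi$ is inherited from the smoothness of the implicit-function construction of $u_{\xi,\lambda}$ and of the ingredients of the expansion.

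The main technical obstacle lies in the bookkeeping of the second step: one has to cleanly separate the contributions of $g_S$ and $\sigma$ to the area expansion, identify the precise $O(1)$ discrepancy between $|S_{\tilde\xi,\tilde\lambda}|_{g_S}$ and $4\,\pi\,\lambda^2$ responsible for the explicit $-\lambda^{-1}$ term, track the conformal correction to $H_S$, and invoke the Step 1 control on the non-constant modes of $\tilde u$ to prevent nonlinear coupling from polluting the constant mode at the error level $O(\lambda^{-2})+O(\lambda^{-1}|\xi|^2)$.
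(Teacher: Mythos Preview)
Your Step 2 strategy (linearize the area constraint around $S_{\tilde\xi,\tilde\lambda}$) is essentially what the paper does. However, there are two genuine gaps.

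\textbf{Step 1.} The claim that in pure Schwarzschild $\tilde u=O(\lambda^{-1})$ with no $\xi$-dependence, and that the $O(|\xi|^2)$ contribution arises only from $\sigma$ breaking translation invariance, is incorrect. The explicit Legendre expansion in Proposition~\ref{LS prop},
\[
u_{\xi,\lambda}=-2+4\sum_{\ell\ge 2}\frac{|\xi|^\ell}{\ell}\,P_\ell\bigl(-|\xi|^{-1}\bar g(\bar\nu,\xi)\bigr)+O(\lambda^{-1}),
\]
shows that $\tilde u=u+2$ already carries $O(|\xi|^2)$ terms at \emph{leading} order in $\lambda$; these encode the genuine distortion of the Lyapunov--Schmidt spheres in Schwarzschild when the center is off the origin and have nothing to do with $\sigma$. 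The approximate translation invariance of the Willmore \emph{energy} (Lemma~\ref{G lemma}) does not imply that the surfaces $\Sigma_{\xi,\lambda}$ are near-translates of $\Sigma_{0,\lambda}$. The paper's proof of \eqref{tilde u estimate eq} is a one-liner: since the Legendre sum starts at $\ell=2$, one reads off $\tilde u=O(|\xi|^2)+O(\lambda^{-1})$ directly from \eqref{u estimate}.

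\textbf{Step 2.} Your error bound $|S_{\tilde\xi,\tilde\lambda}|_{g_S}=4\pi\lambda^2+8\pi+O(\lambda^{-1})+O(|\xi|^2\,\lambda)$ is one power of $\lambda$ too weak: fed into the area balance it yields only $\operatorname{proj}_{\Lambda_0}\tilde u=\cdots+O(|\xi|^2)$, not the claimed $O(\lambda^{-1}|\xi|^2)$. Evenness in $\xi$ alone does not give the needed bound; you must use that $\int_{S_{\tilde\lambda}(\lambda\xi)}|x|^{-1}\,\mathrm d\bar\mu=4\pi\tilde\lambda$ is \emph{exactly} $\xi$-independent (mean value property of the harmonic function $|x|^{-1}$ on a sphere enclosing the origin), so the first $\xi$-dependent contribution enters only at the $|x|^{-2}$ level and is $O(|\xi|^2)$ with no factor of $\lambda$. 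The paper obtains this implicitly by expanding the area element $\mathrm d\mu=[1+4|x|^{-1}+6|x|^{-2}+\tfrac12(\bar{\operatorname{tr}}\,\sigma-\sigma(\bar\nu,\bar\nu))+O(|x|^{-3})]\,\mathrm d\bar\mu$ and integrating term by term over $S_{\tilde\xi,\tilde\lambda}$.
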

\begin{proof}
	\eqref{tilde u estimate eq} follows directly from  \eqref{u estimate}. Using the identity
	$$
	\mathrm{d}\mu=\left[1+4\,|x|^{-1}+6\,|x|^{-2}+\frac12\,(\bar{\operatorname{tr}}\,\sigma-\sigma(\bar\nu,\bar\nu))+O(|x|^{-3})\right]\mathrm{d}\bar\mu,
	$$
\eqref{tilde normal relation}, and the fact that $(M,g)$ is $C^4$-asymptotic to Schwarzschild, we find that
$$
|S_{\tilde \xi,\tilde\lambda}|=4\,\pi\,\lambda^2+8\,\pi+\frac12 \int_{S_{\xi,\lambda}}[\bar{\operatorname{tr}}\,\sigma-\sigma(\bar\nu,\bar\nu)]\,\mathrm{d}\bar\mu+O(\lambda^{-1})+O(|\xi|^{2})
$$
provided that $\delta>0$ is sufficiently small. Using also that $|\Sigma_{\xi,\lambda}|=4\,\pi\,\lambda^2$, that $H(S_{\tilde\xi,\tilde\lambda})=2\,\lambda^{-1}+O(\lambda^{-2})$, and \eqref{tilde u estimate eq}, the first variation of area formula  therefore yields 
\begin{align*} 
2\,\lambda^{-1}\,\int_{S_{\tilde\xi,\tilde \lambda}}\tilde u\,\mathrm{d}\bar\mu=\,&|\Sigma_{\xi,\lambda}|-|S_{\tilde\xi,\tilde\lambda}|+O(\lambda^{-1})+O(|\xi|^{2})\\=\,&
-8\,\pi- \frac12 \int_{S_{\xi,\lambda}}[\bar{\operatorname{tr}}\,\sigma-\sigma(\bar\nu,\bar\nu)]\,\mathrm{d}\bar\mu+O(\lambda^{-1})+O(|\xi|^{2}).
\end{align*}
This implies \eqref{tilde u proj}.
\end{proof}
We proceed to compute a precise estimate for the Willmore energy of $\Sigma_{\xi,\lambda}$.
\begin{lem}  There exists $\delta\in(0,1/4)$ such that, \label{willmore expansion}  uniformly for every $\xi\in\mathbb{R}^3$ with $|\xi|<\delta $ as $\lambda\to\infty$,  
	\begin{align*} 
	\int_{\Sigma_{\xi,\lambda}}H^2\,\mathrm{d}\mu=&\,\int_{S_{\tilde\xi,\tilde\lambda}} H^2\,\mathrm{d}\mu-64\,\pi\,\lambda^{-3}-4\,\lambda^{-3}\,\int_{S_{\xi,\lambda}}[\bar{\operatorname{tr}}\,\sigma-\sigma(\bar\nu,\bar\nu)]\,\mathrm{d}\bar\mu\\&\qquad+O(\lambda^{-3}\,|\xi|^2)+O(\lambda^{-2}\,|\xi|^4\,)+O(\lambda^{-4}).
	\end{align*} 
	  This expansion may be differentiated once with respect to $\xi$.
\end{lem}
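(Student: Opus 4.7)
The plan is to expand $F(\tilde u):=\int_{\Sigma_{\tilde\xi,\tilde\lambda}(\tilde u)}H^2\,\mathrm{d}\mu$ by Taylor's theorem around $\tilde u=0$ to first order and then substitute the decomposition of $\tilde u$ afforded by Lemma \ref{tilde u estimate}. The standard first variation formula for the Willmore energy under a normal perturbation $\tilde u\,\nu$ reads
\begin{align*}
(DF)_0[\tilde u]=-2\int_{S_{\tilde\xi,\tilde\lambda}}\bigl[\Delta H+(|\hcirc|^2+\operatorname{Ric}(\nu,\nu))\,H\bigr]\tilde u\,\mathrm{d}\mu,
\end{align*}
and the Taylor remainder is $O(\lambda^{-2}\,\|\tilde u\|_{C^2}^2)$, reflecting that the Willmore Hessian at a large near-Euclidean sphere has leading part $\Delta^2$ of natural scale $\lambda^{-4}$ integrated over area $\sim\lambda^2$. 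Using $\|\tilde u\|_{C^2}\lesssim|\xi|^2+\lambda^{-1}$ from Proposition \ref{LS prop} and \eqref{tilde u estimate eq}, this remainder is absorbed in $O(\lambda^{-2}|\xi|^4)+O(\lambda^{-3}|\xi|^2)+O(\lambda^{-4})$.

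Second, one computes the integrand of $(DF)_0$ on the sphere $S_{\tilde\xi,\tilde\lambda}$ in the ambient metric $g=g_S+\sigma$. Since $\int_S\Delta H\,\mathrm{d}\mu=0$ and coordinate spheres are umbilic in the conformally flat background $g_S$, the term $|\hcirc|^2$ is $O(\lambda^{-4})$ on $S_{\tilde\xi,\tilde\lambda}$, coming entirely from the $\sigma$-perturbation. The main contribution comes from $H\,\operatorname{Ric}(\nu,\nu)$: the explicit conformal formula for the Ricci tensor of $g_S=(1+|x|^{-1})^4\,\bar g$ combined with $H\approx 2\,\lambda^{-1}$ gives $\operatorname{Ric}_S(\nu,\nu)\approx -4\,\lambda^{-3}$, so integration over the sphere of area $\approx 4\,\pi\,\lambda^2$ yields
\begin{align*}
\int_{S_{\tilde\xi,\tilde\lambda}}(-2)\bigl[\Delta H+(|\hcirc|^2+\operatorname{Ric}(\nu,\nu))\,H\bigr]\mathrm{d}\mu=\frac{64\,\pi}{\lambda^2}+O(\lambda^{-3}).
\end{align*}
The $\sigma$-dependent corrections to $H$, $\operatorname{Ric}(\nu,\nu)$, $|\hcirc|^2$, and $\mathrm{d}\mu$ affect this coefficient only at order $\lambda^{-3}$, and thus, paired with $\operatorname{proj}_{\Lambda_0}\tilde u=O(\lambda^{-1})$, produce errors of order $\lambda^{-4}$.

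Finally, decompose $\tilde u=\operatorname{proj}_{\Lambda_0}\tilde u+\operatorname{proj}_{\Lambda_0^\perp}\tilde u$. The constant component contributes, via \eqref{tilde u proj},
\begin{align*}
\tfrac{64\,\pi}{\lambda^2}\cdot\operatorname{proj}_{\Lambda_0}\tilde u=-\tfrac{64\,\pi}{\lambda^3}-\tfrac{4}{\lambda^3}\int_{S_{\xi,\lambda}}[\bar{\operatorname{tr}}\,\sigma-\sigma(\bar\nu,\bar\nu)]\,\mathrm{d}\bar\mu+O(\lambda^{-4})+O(\lambda^{-3}\,|\xi|^2),
\end{align*}
which reproduces the advertised main terms. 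The orthogonal component pairs only against the non-constant part of the integrand; the Lyapunov-Schmidt construction in Proposition \ref{LS prop} orthogonalizes $u_{\xi,\lambda}$ against the translation kernel (the $\ell=1$ spherical harmonic modes), so the potentially worst dipole contribution of size $|\xi|\,\lambda^{-4}$ drops out, and the surviving $\ell\geq 2$ modes of size $|\xi|^2\,\lambda^{-4}$ yield, by Cauchy-Schwarz with the $L^2$ bound on $\operatorname{proj}_{\Lambda_0^\perp}\tilde u$, an error absorbed into $O(\lambda^{-2}|\xi|^4)+O(\lambda^{-3}|\xi|^2)$. Differentiability with respect to $\xi$ follows by differentiating the expansion term by term, legitimized by the smooth $\xi$-dependence of $\tilde u_{\xi,\lambda}$ from Proposition \ref{LS prop}. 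The main obstacle is the precise bookkeeping of all $\sigma$-dependent corrections, confirming that no $\sigma$-dependent contribution appears at order $\lambda^{-3}$ beyond the one transmitted through $\operatorname{proj}_{\Lambda_0}\tilde u$.
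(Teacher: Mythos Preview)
Your approach is essentially the same as the paper's. The paper's proof is a three-line application of Lemma~\ref{willmore expansion lemma} (the Taylor expansion with remainder $O(\epsilon^2\lambda^{-2})$), Lemma~\ref{tilde u estimate} (the decomposition of $\tilde u$ into its $\Lambda_0$ and $\Lambda_0^\perp$ parts), and Lemma~\ref{willmore sphere} (the Legendre expansion of $W(S_{\tilde\xi,\tilde\lambda})$, which yields $\operatorname{proj}_{\Lambda_0}W=-8\lambda^{-4}+O(\lambda^{-5})$ and $\operatorname{proj}_{\Lambda_0^\perp}W=O(\lambda^{-4}|\xi|^2)+O(\lambda^{-5})$). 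You unpack exactly this argument, recovering the $\Lambda_0$-coefficient by a direct Schwarzschild computation rather than by quoting Lemma~\ref{willmore sphere}.

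Two minor remarks. First, your stated first-variation formula $(DF)_0[\tilde u]=-2\int[\Delta H+(|\hcirc|^2+\operatorname{Ric}(\nu,\nu))H]\tilde u\,\mathrm{d}\mu$ carries the opposite sign from Lemma~\ref{willmore expansion lemma} (which gives $-2\int W\tilde u\,\mathrm{d}\mu=+2\int[\Delta H+\dots]\tilde u\,\mathrm{d}\mu$); your subsequent numerics are internally consistent and land on the correct answer, so this is a convention mismatch rather than an error. Second, your worry about ``bookkeeping of all $\sigma$-dependent corrections'' is resolved in the paper by Lemma~\ref{willmore sphere}: the full expansion of $W(S_{\tilde\xi,\tilde\lambda})$ in the metric $g$ is already controlled to $O(\lambda^{-5})$, so no separate tracking of $\sigma$-terms is needed. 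Your alternative mechanism for suppressing the $\ell=1$ contribution---the orthogonality $\tilde u\in\Lambda_1^\perp$---is valid, but the paper instead uses the intrinsic vanishing of the $\ell=1$ coefficient $(\ell-1)(\ell+1)(\ell+2)|_{\ell=1}=0$ in the Legendre expansion of $W$.
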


\begin{proof} According to Lemma \ref{willmore sphere}, we have
	$$
	\operatorname{proj}_{\Lambda_0}W(S_{\tilde\xi,\tilde\lambda})=-8\,\lambda^{-4}+O(\lambda^{-5})\qquad\text{and}\qquad 
\operatorname{proj}_{\Lambda_0^\perp}W(S_{\tilde \xi,\tilde\lambda})=O(\lambda^{-4}\,|\xi|^2)+O(\lambda^{-5}).
	$$
	The assertion follows from this, Lemma \ref{tilde u estimate} and Lemma \ref{willmore expansion lemma}. 
\end{proof}
\begin{rema} Let $\delta\in(0,1/4)$ and suppose that
	$$
	\mathcal{E}:\{\xi\in\mathbb{R}^3:|\xi|<\delta\}\times\{\lambda\in\mathbb{R}:\lambda>1\}\to\mathbb{R}
	$$ satisfies, as $\lambda\to\infty$,  $$\mathcal{E}=O(\lambda^{-3}\,|\xi|^2)+O(\lambda^{-2}\,|\xi|^4\,)+O(\lambda^{-4})$$ and
	$$
	\bar D\mathcal E=O(\lambda^{-3}\,|\xi|)+O(\lambda^{-2}\,|\xi|^3\,)+O(\lambda^{-4})
	$$
	where differentiation is with respect to $\xi$.
	Using Lemma \ref{xi estimate}, we find that, as $\lambda\to\infty$,
	$$
	(\bar D\mathcal{E})({\xi(\lambda)})=O(\lambda^{-4}).
	$$
\end{rema}

\begin{lem}	\label{schwarzschild contribution} There exists $\delta\in(0,1/4)$ such that,  uniformly for every $\xi\in\mathbb{R}^3$ with $|\xi|<\delta $ as $\lambda\to\infty$,
	$$
	\int_{S_{\tilde \xi,\tilde\lambda}} H_S^2\,\mathrm{d}\mu_S=16\,\pi-64\,\pi\,\lambda^{-1}+128\,\pi\,|\xi|^2\,\lambda^{-2}+O(\lambda^{-3}\,|\xi|^2)+O(\lambda^{-4}).
	$$
  	This expansion may be differentiated once with respect to $\xi$.
\end{lem}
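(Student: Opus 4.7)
The plan is a direct Euclidean calculation that exploits the conformal structure $g_S=\phi^4\,\bar g$ with $\phi=1+|x|^{-1}$. Combining the conformal transformation of the mean curvature, $H_S=\phi^{-2}(\bar H+4\,\phi^{-1}\,\bar\nabla_{\bar\nu}\phi)$, with the area element $\mathrm{d}\mu_S=\phi^4\,\mathrm{d}\bar\mu$ yields the convenient identity
$$
H_S^2\,\mathrm{d}\mu_S=(\bar H+4\,\phi^{-1}\,\bar\nabla_{\bar\nu}\phi)^2\,\mathrm{d}\bar\mu,
$$
so the conformal factor no longer appears in the area element. The computation therefore reduces to an explicit Euclidean integral over the sphere $S_{\tilde\xi,\tilde\lambda}$.

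First, I would treat the centered case $\xi=0$. On $S_{0,\lambda-2}$ the function $\phi=(\lambda-1)/(\lambda-2)$ is constant, and a direct evaluation yields
$$
\int_{S_{0,\lambda-2}}H_S^2\,\mathrm{d}\mu_S=16\,\pi\,\frac{(\lambda-3)^2}{(\lambda-1)^2}=16\,\pi-64\,\pi\,\lambda^{-1}+(\text{terms absorbed into the allowed error}),
$$
which supplies the $|\xi|^0$-contribution. Equivalently, this is a restatement of the classical identity $m_H(S_r(0))=m=2$ for centered coordinate spheres in Schwarzschild.

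For general small $\xi$ I would parametrize $S_{\tilde\xi,\tilde\lambda}$ by $x=\lambda\,\xi+(\lambda-2)\,\omega$ with $\omega\in\mathbb{S}^2$, so that $\bar\nu=\omega$, $\bar H=2/(\lambda-2)$, and
$$
|x|^2=(\lambda-2)^2+2\,\lambda\,(\lambda-2)\,(\xi\cdot\omega)+\lambda^2\,|\xi|^2, \qquad \phi^{-1}\,\bar\nabla_{\bar\nu}\phi=-\,\frac{\lambda\,(\xi\cdot\omega)+(\lambda-2)}{|x|^2\,(|x|+1)}.
$$
I would then Taylor-expand $|x|^{-k}$ simultaneously in $\lambda^{-1}$ and $|\xi|$ to the required order, substitute into $(\bar H+4\,\phi^{-1}\,\bar\nabla_{\bar\nu}\phi)^2$, and integrate over $\mathbb{S}^2$. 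The essential simplification is the rotational invariance of $g_S$ about the origin, which forces the integral to depend only on $|\xi|$: all odd powers of $\omega$ integrate to zero and only even-in-$\xi$ contributions survive. Using the standard moments $\int_{\mathbb{S}^2}(\xi\cdot\omega)^2\,\mathrm{d}\omega=(4\,\pi/3)\,|\xi|^2$ and their higher-order analogues, the surviving $|\xi|^2$-coefficient evaluates to $128\,\pi\,\lambda^{-2}$, with residual corrections absorbed into the stated $O(\lambda^{-3}\,|\xi|^2)$ and $O(\lambda^{-4})$ errors; contributions of order $|\xi|^4$ and higher fall into the same classes because $|\xi|<\delta$ is small.

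For the claim that the expansion may be differentiated once in $\xi$, note that $\xi$ enters the integrand only through $x=\lambda\,\xi+(\lambda-2)\,\omega$, so $\bar D_\xi$ acts as $\lambda\,\bar\nabla_x$. Repeating the above Taylor expansion after one differentiation yields the analogous bounds, which match the derivative of the leading term $128\,\pi\,|\xi|^2\,\lambda^{-2}$. The main obstacle will be the bookkeeping through the multiple layers of Taylor expansion and the squaring: numerous cross-terms appear at intermediate stages, and it is the rotational symmetry of $g_S$, which annihilates all odd-in-$\omega$ and odd-in-$\xi$ contributions, that keeps the surviving expression as short as asserted.
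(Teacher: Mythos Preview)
Your approach—exploiting the conformal identity $H_S^2\,\mathrm{d}\mu_S=(\bar H+4\,\phi^{-1}\,\bar\nabla_{\bar\nu}\phi)^2\,\mathrm{d}\bar\mu$, Taylor-expanding in $\lambda^{-1}$ and $|\xi|$, and using the rotational symmetry of $g_S$ to eliminate odd moments—is precisely the ``direct computation'' the paper invokes (it refers to the analogous calculation in \cite[Lemma~42]{acws} together with the shift $\tilde\lambda=\lambda-2$, $\tilde\xi=(\lambda-2)^{-1}\lambda\,\xi$). One small caution on your centered check: $16\pi(\lambda-3)^2/(\lambda-1)^2=16\pi-64\pi\,\lambda^{-1}+64\pi\,\lambda^{-3}+O(\lambda^{-4})$, so a $\xi$-independent $\lambda^{-3}$ term survives and is not literally absorbed by the stated $O(\lambda^{-4})$; this is harmless for the paper's purposes because the lemma is only used through its $\xi$-derivative, under which that constant term vanishes.
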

\begin{proof}
	This follows from \eqref{tilde normal relation} and a direct computation similar to that in \cite[Lemma 42]{acws}.
\end{proof}
Recall the conformal Killing operator $\mathcal{D}$ defined in \eqref{killing definition}.
\begin{lem}  There exists $\delta\in(0,1/4)$ such that, 	\label{willmore linearization}  uniformly for every $\xi\in\mathbb{R}^3$ with $|\xi|<\delta $ as $\lambda\to\infty$,
	\begin{align*}
	\int_{S_{\tilde \xi,\tilde \lambda}} H^2\,\mathrm{d}\mu&-\int_{S_{\tilde \xi,\tilde \lambda}} H_S^2\,\mathrm{d}\mu_S
	\\=\,&2\,{\tilde \lambda}^{-1}\int_{\mathbb{R}^3\setminus B_{\tilde\lambda}(\lambda\,\xi)} R\,\mathrm{d}\bar v 	
	\\\,&\qquad+8\,\lambda^{-3}\int_{S_{\xi,\lambda}}\bigg[(\sigma(\bar\nu,\bar\nu)-6\,\bar g(\xi,\bar\nu))\,\sigma(\bar\nu,\bar\nu)+3\,\sigma(\bar\nu,\xi)\bigg]\,\mathrm{d}\bar\mu
	\\\,&\qquad -4\,\lambda^{-1}\,\int_{\mathbb{R}^3\setminus B_\lambda(\lambda\,\xi)}|x|^{-3} \,\sum_{i=1}^3\bigg[2\,(\bar D^2_{e_i,x}\sigma)(e_i,x)-(\bar D^2_{e_i,e_i}\sigma)(x,x)-(\bar D^2_{x,x}\sigma)(e_i,e_i)\\&\qquad \qquad\qquad\qquad\qquad\qquad\qquad\quad
	+\sum_{j=1}^3\big[(\bar D^2_{e_i,e_i}\sigma)(e_j,e_j)-(\bar D^2_{e_i,e_j}\sigma)(e_i,e_j)\big]\bigg]\,\mathrm{d}\bar\mu
	\\\,&\qquad +4\int_{\mathbb{R}^3\setminus B_\lambda(\lambda\,\xi)}|x|^{-3} \,\sum_{i=1}^3\,\bigg[(\bar D^2_{e_i,x}\sigma)(e_i,\xi)+ (\bar D^2_{e_i,\xi}\sigma)(e_i,x)
	\\&\qquad\qquad\qquad\qquad\qquad\qquad\qquad-(\bar D^2_{e_i,e_i}\sigma)(x,\xi)-(\bar D^2_{x,\xi}\sigma)(e_i,e_i)\bigg]\,\mathrm{d}\bar\mu
	\\\,&\qquad -4\int_{\mathbb{R}^3\setminus B_\lambda(\lambda\,\xi)} 
	|x|^{-3}\bigg[\lambda^{-1}\bar D_x\bar{\operatorname{tr}}\,\sigma-3\,\lambda^{-1}\,|x|^{-2}\,(\bar D_x\sigma)(x,x)-\bar D_\xi\bar{\operatorname{tr}}\,\sigma\\&\qquad \qquad\qquad\qquad\qquad\qquad+3\,|x|^{-2}\,(\bar D_\xi\sigma)(x,x)\bigg]
	\,\mathrm{d}\bar\mu
	\\\,&\qquad +O(\lambda^{-4})+O(|\xi|^2\,\lambda^{-3}).
	\end{align*}
	 This expansion may be differentiated once with respect to $\xi$.
\end{lem}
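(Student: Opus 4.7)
The plan is to expand $\int_{S_{\tilde\xi,\tilde\lambda}} H^{2}\,\mathrm{d}\mu$ as a linear-plus-quadratic functional in the perturbation $\sigma=g-g_{S}$ on the fixed Euclidean sphere $S_{\tilde\xi,\tilde\lambda}$. Since $\partial_{J}\sigma=O(|x|^{-2-|J|})$ and every point of $S_{\tilde\xi,\tilde\lambda}$ has $|x|$ comparable to $\lambda$, the quadratic-in-$\sigma$ remainder contributes $O(\lambda^{-4})$ from the rotationally symmetric part and $O(|\xi|^{2}\,\lambda^{-3})$ from the displacement by $\lambda\,\xi$. The linear part is a surface integral in $\sigma$ and $\bar D\sigma$ obtained from the first-variation formulas for the mean curvature and area element under a metric perturbation; the fact that the base metric is $g_{S}=(1+|x|^{-1})^{4}\bar g$ rather than $\bar g$ only introduces explicit $|x|^{-k}$ weights.

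The main step is to reorganize this linear functional as (i) a divergence-form part and (ii) a non-divergence-form part. Applying the Euclidean divergence theorem on the exterior $\mathbb{R}^{3}\setminus B_{\tilde\lambda}(\lambda\,\xi)$ converts the divergence-form part into a bulk integral whose integrand is, up to acceptable errors, the linearization of the scalar curvature of $g$ about $g_{S}$. Since $R_{g_{S}}=0$, this linearization equals $R$ modulo quadratic corrections and lower-order terms from the Schwarzschild conformal factor, all of which fit into $O(\lambda^{-4})+O(|\xi|^{2}\,\lambda^{-3})$. Accounting for the relation \eqref{tilde normal relation} between $S_{\xi,\lambda}$ and $S_{\tilde\xi,\tilde\lambda}$ yields the main term $2\,\tilde\lambda^{-1}\int_{\mathbb{R}^{3}\setminus B_{\tilde\lambda}(\lambda\,\xi)} R\,\mathrm{d}\bar v$. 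The explicit bulk integrals of $(\bar D^{2}\sigma)$-contractions in the statement encode the coordinate form of the linearized scalar curvature, expanded along the two distinguished directions (the position vector $x$ and the transversal direction $\xi$), which is why two separate bulk integrals appear, one weighted by $\lambda^{-1}$ and one with $\xi$ as a coefficient.

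The remaining surface integrals over $S_{\xi,\lambda}$ come from two sources: the non-divergence part of the linearization, and the inner-boundary contributions produced by the divergence theorem along $S_{\lambda}(\lambda\,\xi)$. The prefactor $\lambda^{-3}$ is consistent with $H_{S}^{2}=O(\lambda^{-2})$ and $\sigma=O(\lambda^{-2})$ on $S_{\xi,\lambda}$, so the linearization of $H^{2}\,\mathrm{d}\mu$ contributes at order $\lambda^{-3}$. The differentiability-in-$\xi$ assertion follows by differentiating under the integral sign, using that all surfaces depend smoothly on $\xi$ and that the estimates are uniform for $|\xi|<\delta$.

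The principal obstacle is the bookkeeping: one must correctly identify the divergence-form and non-divergence-form parts of the linearization, track the lower-order Schwarzschild-background corrections (which produce many internal cancellations), and verify that the inner-boundary terms generated by the divergence theorem combine with the non-divergence part to produce precisely the explicit surface integrals in the statement, modulo the allowed errors. This computation is parallel to but considerably more delicate than the one in Lemma \ref{schwarzschild contribution}, since $\sigma$ need not enjoy any asymptotic symmetries and the bulk integrands must be grouped so that all non-divergence contributions land in the listed surface or volume terms.
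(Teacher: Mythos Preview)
Your proposal takes a genuinely different route from the paper, and in doing so misses the structural device that makes the computation feasible.

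The paper does not linearize $H^{2}\,\mathrm{d}\mu$ directly. Instead it invokes an exact (non-perturbative) integral identity from \cite[Lemma 42]{acws}, valid for any metric, which expresses the Willmore energy of $S_{\tilde\xi,\tilde\lambda}$ as
\[
\int H^{2}\,\mathrm{d}\mu
= 16\pi-64\pi\tilde\lambda^{-1}
+2\int |\hcirc|^{2}\,\mathrm{d}\mu
+\tfrac{2}{3}\!\int_{\text{ext}} (\operatorname{div}\tilde Z)\,R\,\mathrm{d}v
+4\!\int\operatorname{Ric}(\nu-\tilde Z,\nu)\,\mathrm{d}\mu
-2\!\int_{\text{ext}} g(\operatorname{Ric},\mathcal{D}\tilde Z)\,\mathrm{d}v,
\]
where $\tilde Z=Z_{\tilde\xi,\tilde\lambda}$ is the vector field \eqref{Z definition} and $\mathcal{D}$ is the conformal Killing operator \eqref{killing definition}. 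This identity already contains the bulk integrals; the divergence theorem has been applied once and for all to the Einstein tensor contracted with $\tilde Z$, using the contracted Bianchi identity. One then subtracts the same identity written for $g_{S}$, for which each integral vanishes, and linearizes each piece separately: $\operatorname{Ric}-\operatorname{Ric}_{S}$ via \eqref{ricci expansion equation}, $\mathcal{D}\tilde Z-\mathcal{D}_{S}\tilde Z$ via \eqref{Killing changes}, $\nu-\nu_{S}$ via \eqref{nu vs nus}, and $\operatorname{div}\tilde Z$ directly. The bulk $\bar D^{2}\sigma$ integrals in the statement are \emph{not} the linearized scalar curvature: they are the linearized Ricci tensor of \eqref{ricci expansion equation} contracted against the explicit tensor $\mathcal{D}_{S}Z$ of \eqref{schwarzschild killing}, and the $|x|^{-3}$ weight comes from $\mathcal{D}_{S}Z$, not from any conformal factor. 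The final bulk integral involving first derivatives $\bar D\sigma$ arises from $\operatorname{Ric}_{S}$ paired with $\mathcal{D}Z-\mathcal{D}_{S}Z$.

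Your plan to linearize on the sphere and then hunt for a divergence structure could in principle reproduce this, but your identification of the bulk terms as ``the coordinate form of the linearized scalar curvature, expanded along $x$ and $\xi$'' is incorrect, and without the $Z$--$\mathcal{D}Z$ formalism you have no mechanism that singles out the weight $|x|^{-3}$ or explains why the integrand splits into the $x$-contracted and $\xi$-contracted pieces. The ``bookkeeping'' you flag as the principal obstacle is precisely what the identity above and the expansions of Appendix~D absorb; absent those, your sketch does not yet constitute a proof.
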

\begin{proof}
	As in the proof of \cite[Lemma 42]{acws}, there holds
	\begin{equation} \label{w 0}
	\begin{aligned}
	\int_{S_{\tilde \xi,\tilde \lambda}} H^2\,\mathrm{d}\mu=&16\,\pi-64\,\pi\,\tilde\lambda^{-1}+2\,\int_{S_{\tilde\xi,\tilde\lambda}} |\hcirc|^2\,\mathrm{d}\mu+\frac{2}{3}\int_{\mathbb{R}^3\setminus B_{\tilde \lambda} (\lambda\,\xi)}(\operatorname{div} \tilde Z)\, R\,
	\mathrm{d}v
	\\&+4\int_{S_{\tilde \xi,\tilde \lambda}} \operatorname{Ric}(\nu-\tilde Z,\nu)\,\mathrm{d}\mu
	-2\int_{\mathbb{R}^3\setminus B_{\tilde \lambda}(\lambda\,\xi)} g(\operatorname{Ric},\mathcal{D}\tilde Z)\,\mathrm{d}v.
	\end{aligned}
	\end{equation}

\indent Note that the first integral on the right-hand side is conformally invariant. It follows that
\begin{align}
 \label{w 1a}
\int_{S_{\tilde\xi,\tilde\lambda}} |\hcirc_S|^2\,\mathrm{d}\mu_S=0.
\end{align}
Likewise, using $R_S=0$, we find that
\begin{align} \label{w 1b} \int_{\mathbb{R}^3\setminus B_{\tilde \lambda} (\lambda\,\xi)}
	\operatorname{div}_S  \tilde Z\,R_S\,\mathrm{d}v_S
	=0.	
	\end{align}
	Similarly, we have $\tilde Z=\nu_S$ on $S_{\tilde\xi,\tilde\lambda}$ and consequently
	\begin{align} \label{w 1c}
	\int_{S_{\tilde \xi,\tilde \lambda}} \operatorname{Ric}_S(\nu_S-\tilde Z,\nu_S)\,\mathrm{d}\mu_S=0.
	\end{align}
\indent 	According to Lemma \ref{hcirc estimate}, there holds
	\begin{align} \label{w 2}
	\int_{S_{\tilde \xi,\tilde \lambda}} |\hcirc|^2\,\mathrm{d}\mu=O(\lambda^{-4}).
	\end{align}
\indent 	We compute that
	$$
	\operatorname{div}\tilde Z=3\,(1+|x|^{-1})^{-2}\,\tilde \lambda^{-1}-4\,\tilde \lambda^{-1}\,|x|^{-1}+4\,|x|^{-3}\,\bar g(\tilde \xi,x)+O(\lambda^{-1}\,|x|^{-2})+O(|x|^{-3}).
	$$
	In conjunction with the estimate $$\mathrm{d}\mu=[1+4\,|x|^{-1} +O(|x|^{-2})]\,\mathrm{d}\bar\mu,$$ it follows that
	\begin{equation}  \label{w 3}
	\begin{aligned}
	\int_{\mathbb{R}^3\setminus B_{\tilde \lambda} (\lambda\,\xi)} \operatorname{div} \tilde Z\,R\,
	\mathrm{d}v=\,&\tilde \lambda^{-1} \int_{\mathbb{R}^3\setminus B_{\tilde \lambda} (\lambda\,\xi)}
	\bigg[3+2\,|x|^{-1}+4\,|x|^{-3}\,\bar g(\xi,x)\bigg]\,R\,\mathrm{d}\bar v\\&\qquad +O(\lambda^{-4})+O(|\xi|^2\,\lambda^{-3}).
	\end{aligned} 
	\end{equation}
\indent	Using \eqref{tilde normal relation}, \eqref{rics}, and \eqref{nu vs nus}, we find 
	\begin{equation} \label{w 4}
	\begin{aligned}
	\int_{S_{\tilde \xi,\tilde \lambda}} \operatorname{Ric}(\nu-\tilde Z,\nu)\,\mathrm{d}\mu=\,&\int_{S_{\xi,\lambda}} \operatorname{Ric}_S(\nu-Z,\bar\nu)\,\mathrm{d}\bar \mu+O(\lambda^{-4})
	\\=\,&2\,\lambda^{-3}\int_{S_{\xi,\lambda}}\bigg[\sigma(\bar\nu,\bar\nu)-6\,\bar g(\xi,\bar\nu)\,\sigma(\bar\nu,\bar\nu)+3\,\sigma(\bar\nu,\xi)\bigg]\,\mathrm{d}\bar\mu\\&\qquad +O(\lambda^{-4})+O(|\xi|^2\,\lambda^{-3}).
	\end{aligned}
	\end{equation}
	\indent Using \eqref{tilde normal relation}, we obtain
	\begin{equation} 
	 \label{w 5}
	\begin{aligned}
	\int_{\mathbb{R}^3\setminus B_{\tilde \lambda}(\lambda\,\xi)}& g(\operatorname{Ric},\mathcal{D}\tilde Z)\,\mathrm{d}\mu-\int_{\mathbb{R}^3\setminus B_{\tilde \lambda}(\lambda\,\xi)} g_S(\operatorname{Ric}_S,\mathcal{D}_S\tilde Z)\,\mathrm{d}\mu_S\\=\,&\int_{\mathbb{R}^3\setminus B_{\tilde \lambda}(\lambda\,\xi)} \bigg[\bar g({\operatorname{Ric}}-\operatorname{Ric}_S,\mathcal{D}_S\tilde Z)+\bar g({\operatorname{Ric}_S},\mathcal{D}\tilde Z-\mathcal{D}_S\tilde Z)\bigg]\,\mathrm{d}\bar\mu
	+O(\lambda^{-4})
	\\=\,&\int_{\mathbb{R}^3\setminus B_{ \lambda}(\lambda\,\xi)} \bigg[\bar g({\operatorname{Ric}}-\operatorname{Ric}_S,\mathcal{D}_SZ)+\bar g({\operatorname{Ric}_S},\mathcal{D} Z-\mathcal{D}_S Z)\bigg]\,\mathrm{d}\bar\mu+O(\lambda^{-4}).
	\end{aligned}
	\end{equation} 
	Using \eqref{ricci expansion equation}, \eqref{schwarzschild killing}, and that $R_S=0$, we compute
	\begin{equation} 
	\begin{aligned}
	\int_{\mathbb{R}^3\setminus B_\lambda(\lambda\,\xi)} &\bar g({\operatorname{Ric}-\operatorname{Ric}_S},\mathcal{D}_SZ)\,\mathrm{d}\bar\mu\\=&\,2\,\lambda^{-1}\int_{\mathbb{R}^3\setminus B_\lambda(\lambda\,\xi)}|x|^{-3}\,\sum_{i=1}^3 \bigg[2\,(\bar D^2_{e_i,x}\sigma)(e_i,x)-(\bar D^2_{e_i,e_i}\sigma)(x,x)- (\bar D^2_{x,x}\sigma)(e_i,e_i)\bigg]\,\mathrm{d}\bar\mu\\
	&\qquad \,-2\int_{\mathbb{R}^3\setminus B_\lambda(\lambda\,\xi)}|x|^{-3}\,\sum_{i=1}^3 \bigg[(\bar D^2_{e_i,x}\sigma)(e_i,\xi)+(\bar D^2_{e_i,\xi}\sigma)(e_i,x)
	\\&\qquad\qquad\qquad\qquad\qquad\qquad\qquad-(\bar D^2_{e_i,e_i}\sigma)(x,\xi)-(\bar D^2_{x,\xi}\sigma)(e_i,e_i)\bigg]\,\mathrm{d}\bar\mu
	\\&\qquad \,+\frac43\int_{\mathbb{R}^3\setminus B_{\lambda}(\lambda\,\xi)}\bigg[|x|^{-3}\,\bar g(x,\xi)-\lambda^{-1}\,|x|^{-1}\bigg]\,R\,\mathrm{d}\bar\mu \\	
	&\qquad \,+O(\lambda^{-4}).
	\end{aligned} \label{w 6}
	\end{equation}
	For the first line of the right-hand side of \eqref{w 6}, we note that  
	\begin{align} 
	R=\sum_{i,\,j=1}^3\big[(\bar D^2_{e_i,e_j}\sigma)(e_i,e_j)-\bar D^2_{e_i,e_i}\sigma(e_j,e_j)\big]+O(|x|^{-5});
	\end{align} 
	see \eqref{ricci expansion equation}.
	Finally, using \eqref{Killing changes} and that $R_S=0$, we obtain
	\begin{equation} \label{w 7}
	\begin{aligned}
	\int_{\mathbb{R}^3\setminus B_\lambda(\lambda\,\xi)}& \bar g({\operatorname{Ric}_S},{\mathcal{D}Z}-\mathcal{D}_SZ)\,\mathrm{d}\bar\mu\\=&\,2\int_{\mathbb{R}^3\setminus B_\lambda(\lambda\,\xi)} 
	|x|^{-3}\bigg[\lambda^{-1}(\bar D_x\bar{\operatorname{tr}}\,\sigma)-3\,\lambda^{-1}\,|x|^{-2}\,(\bar D_x\sigma)(x,x)-(\bar D_\xi\bar{\operatorname{tr}}\,\sigma)\\&\qquad\qquad\qquad\qquad+3\,|x|^{-2}\,(\bar D_\xi\sigma)(x,x)\bigg]
	\,\mathrm{d}\bar\mu
	\\&\qquad +O(\lambda^{-4}).
	\end{aligned}
	\end{equation}
	\indent Assembling (\ref{w 0}-\ref{w 7}), the assertion of the lemma follows.
\end{proof}
\begin{prop}
	There holds, as $\lambda\to\infty$, \label{der prop}
	\begin{align*}
	256\,\pi\,\lambda\,\xi(\lambda)=&\,2\,\lambda^3\,\int_{S_{\lambda}(\lambda\,\xi(\lambda))}R\,\bar \nu\,\mathrm{d}\bar\mu\\&\qquad +
	8\,\lambda\int_{S_\lambda(0)}\left[(\bar{D}\,\bar{\operatorname{tr}}\,\sigma)-(\bar D\sigma)(\bar\nu,\bar\nu)-2\,\lambda^{-1}\,\bar{\operatorname{tr}}\,\sigma\,\bar\nu\right]\,\mathrm{d}\bar\mu\\&\qquad +O(\lambda^{-1}).
	\end{align*}
\end{prop}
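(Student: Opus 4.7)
The plan is to combine Lemmas \ref{willmore expansion}, \ref{schwarzschild contribution}, and \ref{willmore linearization} with the definition \eqref{G definition} of $G_\lambda$ into an explicit expansion
\[
G_\lambda(\xi) = 128\,\pi\,|\xi|^2 + 2\,\lambda^2\,\tilde\lambda^{-1}\int_{\mathbb{R}^3\setminus B_{\tilde\lambda}(\lambda\,\xi)} R\,\mathrm{d}\bar v + \Phi_\lambda(\xi) + \mathcal{E}(\xi,\lambda),
\]
where $\Phi_\lambda(\xi)$ collects the $\sigma$-dependent integrals from Lemma \ref{willmore linearization} together with the boundary integral $-4\,\lambda^{-1}\int_{S_{\xi,\lambda}}[\bar{\operatorname{tr}}\,\sigma-\sigma(\bar\nu,\bar\nu)]\,\mathrm{d}\bar\mu$ arising from the second term in Lemma \ref{willmore expansion}, and $\mathcal{E}$ lies in the class of error terms described in the remark following Lemma \ref{willmore expansion}. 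Differentiating in $\xi$, imposing $(\bar D G_\lambda)(\xi(\lambda)) = 0$, invoking $\xi(\lambda) = O(\lambda^{-1})$ from Lemma \ref{xi estimate} to control the differentiated error, and then multiplying by $\lambda$ gives
\[
256\,\pi\,\lambda\,\xi(\lambda) = -2\,\lambda^3\,\tilde\lambda^{-1}\,\bar D_\xi\!\int_{\mathbb{R}^3\setminus B_{\tilde\lambda}(\lambda\,\xi)} R\,\mathrm{d}\bar v\,\bigg|_{\xi(\lambda)} - \lambda\,(\bar D\Phi_\lambda)(\xi(\lambda)) + O(\lambda^{-1}).
\]

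For the scalar curvature term, since $\xi$ enters the integral only through the center of the excluded ball, the chain rule together with the divergence theorem yields $\partial_{\xi^j}\int_{\mathbb{R}^3\setminus B_{\tilde\lambda}(\lambda\,\xi)}R\,\mathrm{d}\bar v = -\lambda\int_{S_{\tilde\lambda}(\lambda\,\xi)} R\,\bar\nu^j\,\mathrm{d}\bar\mu$. Substituting, the first summand above becomes $2\,\lambda^4\,\tilde\lambda^{-1}\int_{S_{\tilde\lambda}(\lambda\,\xi(\lambda))} R\,\bar\nu\,\mathrm{d}\bar\mu$. Replacing $\lambda^4\,\tilde\lambda^{-1}$ by $\lambda^3$ and the sphere $S_{\tilde\lambda}$ by $S_\lambda$ yields corrections which, exploiting the antipodal symmetry bound $R(x)-R(-x)=O(|x|^{-5})$ from \eqref{weaker center 2.2}, fit into the $O(\lambda^{-1})$ remainder. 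This accounts for the first term on the right-hand side of the stated identity.

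The main computational step is to identify
\[
-\lambda\,(\bar D\Phi_\lambda)(\xi(\lambda)) = 8\,\lambda\int_{S_\lambda(0)}\left[(\bar D\,\bar{\operatorname{tr}}\,\sigma) - (\bar D\sigma)(\bar\nu,\bar\nu) - 2\,\lambda^{-1}\,\bar{\operatorname{tr}}\,\sigma\,\bar\nu\right]\mathrm{d}\bar\mu + O(\lambda^{-1}).
\]
To this end, I will differentiate each $\sigma$-integral from Lemma \ref{willmore linearization} in turn. The $\xi$-dependence comes from two sources: either $\xi$ appears only through the domain of integration (via the excluded ball $B_\lambda(\lambda\,\xi)$ or the sphere $S_{\xi,\lambda}$), in which case differentiation produces boundary integrals by the divergence theorem; or it enters linearly in the integrand through $\bar g(\cdot,\xi)$, $\sigma(\cdot,\xi)$, $(\bar D^2_{\cdot,\xi}\sigma)$, and $(\bar D_\xi\sigma)$, yielding volume integrals. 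I will integrate the latter by parts and reduce them to boundary terms, using the contracted identity $R = \sum_{i,j=1}^3[(\bar D^2_{e_i,e_j}\sigma)(e_i,e_j) - (\bar D^2_{e_i,e_i}\sigma)(e_j,e_j)] + O(|x|^{-5})$ recorded in the proof of Lemma \ref{willmore linearization} to combine higher-derivative contributions. After cancellation, the sphere $S_\lambda(\lambda\,\xi(\lambda))$ is replaced by $S_\lambda(0)$ at a cost of $O(\lambda^{-1})$, once more using Lemma \ref{xi estimate}.

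I expect the main obstacle to be precisely this final consolidation: the many $\bar D\sigma$ and $\bar D^2\sigma$ contributions must be organized into divergence forms and combined so that only the three boundary terms displayed in the statement survive. Keeping all remainders at order $\lambda^{-1}$ under control while simultaneously performing the integrations by parts and translating the center of integration from $\lambda\,\xi(\lambda)$ to $0$ is the delicate point of the argument.
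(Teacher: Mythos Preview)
Your plan is correct and follows essentially the same route as the paper's proof: both assemble Lemmas \ref{willmore expansion}, \ref{schwarzschild contribution}, and \ref{willmore linearization}, differentiate in $\xi$, impose $(\bar D G_\lambda)(\xi(\lambda))=0$, use Lemma \ref{xi estimate} to control the differentiated error, and then reduce the $\sigma$-integrals to boundary terms by integration by parts. The paper carries out your ``consolidation'' step explicitly by (i) integrating by parts tangentially on $S_\lambda(0)$ after observing that all $\bar D^2_{\bar\nu,\bar\nu}$ contributions cancel, and (ii) introducing the vector field $T=\sum_i[(\bar D_Y\sigma)(a,e_i)+(\bar D_a\sigma)(Y,e_i)-(\bar D_{e_i}\sigma)(Y,a)-\bar g(Y,e_i)\bar D_a\bar{\operatorname{tr}}\,\sigma]\,e_i$ with $Y=|x|^{-3}x$ to convert the bulk second-derivative terms into a single boundary integral; this is exactly the divergence-form reorganization you anticipate but do not spell out.
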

\begin{proof}
Recall that
	$
	(\bar D G_\lambda)({\xi(\lambda)})=0.
	$
In conjunction with Lemma \ref{xi estimate} and Lemma \ref{willmore expansion}, this implies
	\begin{align} \label{p1}
	\left(\bar D \int_{S_{\xi,\tilde \lambda}} H^2\,\mathrm{d}\mu\right)({\tilde \xi(\lambda)})-4\,\lambda^{-2}\,\int_{S_{\xi,\lambda}}\big[\bar D\,\bar{\operatorname{tr}}\,\sigma-(\bar D\sigma)(\bar\nu,\bar\nu)\big]\,\mathrm{d}\bar\mu+O(\lambda^{-4})=0.
	\end{align}
 Lemma \ref{xi estimate} and Lemma \ref{schwarzschild contribution} imply that
	\begin{align} \label{p2}
	\left(\bar D \int_{S_{\xi,\tilde \lambda}} H_S^2\,\mathrm{d}\mu_S\right)({\tilde \xi(\lambda)})=256\,\pi\,\lambda^{-2}\,\xi(\lambda)+O(\lambda^{-4}).
	\end{align}
	Using \eqref{tilde normal relation}, Lemma \ref{xi estimate}, and the fact that $(M,g)$ is $C^4$-asymptotic to Schwarzschild, we infer from Lemma \ref{willmore linearization} that, for every $a\in\mathbb{R}^3$ with $|a|=1$, we have
	\begin{equation} \label{p3}
	\begin{aligned}
	&\left(\bar D_a \int_{S_{ \xi,\tilde \lambda}} H^2\,\mathrm{d}\mu\right)({\tilde \xi(\lambda)})-\left(\bar D_a\int_{S_{\xi,\tilde \lambda}} H_S^2\,\mathrm{d}\mu_S\right)({\tilde \xi(\lambda)})
	\\&\qquad =\,-2 \,\lambda\,\tilde \lambda\,\int_{S_{\tilde \lambda}(\lambda\, \xi(\lambda))}\bar g(\bar \nu,a)\,R\,\mathrm{d}\bar\mu
	\\
	&\qquad\qquad +\,8\,\lambda^{-3}\int_{S_{\lambda}(0)}\bigg[\lambda\,(\bar D_a\sigma)(\bar\nu,\bar\nu)-6\,\bar g(a,\bar\nu)\,\sigma(\bar\nu,\bar\nu)+3\,\sigma(\bar\nu,a)\bigg]\,\mathrm{d}\bar\mu
	\\\,&\qquad\qquad+4\,\lambda^{-1}\,\int_{S_\lambda(0)}\bar g(\nu,a)\,\sum_{i=1}^3\bigg[2\,(\bar D^2_{e_i,\bar\nu}\sigma)(\bar\nu,e_i)-(\bar D^2_{e_i,e_i}\sigma)(\bar\nu,\bar\nu)-(\bar D^2_{\bar\nu,\bar\nu}\sigma)(e_i,e_i)\\&\qquad\qquad\qquad\qquad\qquad\qquad\qquad\qquad +\sum_{j=1}^3\big[(\bar D^2_{e_i,e_i}\sigma)(e_j,e_j)-(\bar D^2_{e_i,e_j}\sigma)(e_i,e_j)\big]\bigg]\,\mathrm{d}\bar\mu
	\\\,&\qquad\qquad+4\,\int_{\mathbb{R}^3\setminus B_\lambda(0)}|x|^{-3} \,\sum_{i=1}^3\bigg[(\bar D^2_{e_i,x}\sigma)(e_i,a)+(\bar D^2_{e_i,a}\sigma)(e_i,x)\\&\qquad\qquad\qquad\qquad\qquad\qquad\qquad\quad -(\bar D^2_{e_i,e_i}\sigma)(x,a)-(\bar D^2_{x,a}\sigma)(e_i,e_i)\bigg]\,\mathrm{d}\bar\mu
	\\\,&\qquad\qquad+4\,\lambda^{-2}\,\int_{S_\lambda(0)} \bar g(\bar\nu,a)\, 
	\bigg[\bar D_{\bar\nu}\bar{\operatorname{tr}}\,\sigma-3\,(\bar D_{\bar\nu}\sigma)(\bar\nu,\bar\nu)\bigg]\,\mathrm{d}\bar\mu
	\\\,&\qquad\qquad+4\,\int_{\mathbb{R}^3\setminus B_\lambda(0)} |x|^{-3}\,
	\bigg[\bar D_a\bar{\operatorname{tr}}\,\sigma-3\,|x|^{-2}\,(\bar D_a\sigma)(x,x)\bigg]
	\,\mathrm{d}\bar\mu
	\\\,&\qquad\qquad+O(\lambda^{-4}).
	\end{aligned}
	\end{equation}
\indent 

	Since $(M,g)$ is $C^4$-asymptotic to Schwarzschild, we obtain from (\ref{weaker center 2.2})  that
	\begin{align}
\sum_{i=1}^3	x^i\big[(\partial_i R)(x)+(\partial_i R)(-x)\big]=O(|x|^{-5}).
	\label{dweakercenter}
	\end{align}
	Indeed, if (\ref{dweakercenter}) failed, integration along radial lines would imply that (\ref{weaker center 2.2}) fails as well.
	Consequently, in conjunction with \eqref{tilde normal relation} and Lemma \ref{xi estimate},  we obtain 
	\begin{equation} \label{p5}
	\int_{S_{\tilde \lambda}(\lambda\, \xi(\lambda))}\,\bar g(\bar \nu,a)\,R\,\mathrm{d}\bar\mu=\int_{S_{\lambda}(\lambda\, \xi(\lambda))}\,\bar g(\bar \nu,a)\,R\,\mathrm{d}\bar\mu+O(\lambda^{-1}).
	\end{equation}
	\indent Next, we observe that all derivatives of the form $\bar D^2_{\bar\nu,\bar\nu}$ in the term
	$$
\sum_{i=1}^3\bigg[	2\,(\bar D^2_{e_i,\bar\nu}\sigma)(\bar\nu,e_i)-(\bar D^2_{e_i,e_i}\sigma)(\bar\nu,\bar\nu)-(\bar D^2_{\bar\nu,\bar\nu}\sigma)(e_i,e_i)+\sum_{j=1}^3\big[(\bar D^2_{e_i,e_i}\sigma)(e_j,e_j)-(\bar D^2_{e_i,e_j}\sigma)(e_i,e_j)\big]\bigg]
	$$cancel. We may therefore use integration by parts and the decomposition $a=a^\perp+a^\top$ with respect to $\bar g$ to find that
	\begin{equation}
	\begin{aligned}
	&\int_{S_\lambda(0)}\bar g(\nu,a)\,\sum_{i=1}^3\bigg[	2\,(\bar D^2_{e_i,\bar\nu}\sigma)(\bar\nu,e_i)-(\bar D^2_{e_i,e_i}\sigma)(\bar\nu,\bar\nu)-(\bar D^2_{\bar\nu,\bar\nu}\sigma)(e_i,e_i)\\&\qquad\qquad\qquad\qquad\quad +\sum_{j=1}^3\big[(\bar D^2_{e_i,e_i}\sigma)(e_j,e_j)-(\bar D^2_{e_i,e_j}\sigma)(e_i,e_j)\big]\bigg]
	\,\mathrm{d}\bar\mu
	\\ &\qquad =\lambda^{-1}\int_{S_\lambda(0)}\bigg[\bar D_a\bar{\operatorname{tr}}\,\sigma-(\bar D_a\sigma)(\bar\nu,\bar\nu)-2\,\lambda^{-1}\,\bar{\operatorname{tr}}\,\sigma\bigg]\,\mathrm{d}\bar\mu.
	\end{aligned}	
	\end{equation}
\indent 	Next, note that the vector field $Y=|x|^{-3}\,x$ is divergence free. Let
	$$
	T=\sum_{i=1}^3\bigg[(\bar D_Y\sigma)(a,e_i)+(\bar D_a\sigma)(Y,e_i)-(\bar D_{e_i}\sigma)(Y,a)-\bar g(Y,e_i)\,\bar D_a\bar{\operatorname{tr}}\,\sigma\bigg]e_i.
	$$
	There holds
	\begin{align*}
	\bar {\operatorname{div}}\,T=&|x|^{-3}\,\sum_{i=1}^3 \bigg[(\bar D^2_{e_i,x}\sigma)(e_i,a)+(\bar D^2_{e_i,a}\sigma)(e_i,x)-(\bar D^2_{e_i,e_i}\sigma)(x,a)-(\bar D^2_{x,a}\sigma)(e_i,e_i)\bigg]\\&\qquad  +|x|^{-3}\,\big[\bar D_a\bar{\operatorname{tr}}\,\sigma-3\,|x|^{-2}\,(\bar D_a\sigma)(x,x)\big].
	\end{align*}
	Consequently,
		\begin{equation} \label{p60}
	\begin{aligned}
&\int_{\mathbb{R}^3\setminus B_\lambda(0)}|x|^{-3} \sum_{i=1}^3\bigg[(\bar D^2_{e_i,x}\sigma)(e_i,a)+(\bar D^2_{e_i,a}\sigma)(e_i,x)-(\bar D^2_{e_i,e_i}\sigma)(x,a)-(\bar D^2_{x,a}\sigma)(e_i,e_i)\bigg]\,\mathrm{d}\bar\mu	
	\\\,&\qquad+\int_{\mathbb{R}^3\setminus B_\lambda(0)} |x|^{-3}\,
	\bigg[\bar D_a\bar{\operatorname{tr}}\,\sigma-3\,|x|^{-2}\,(\bar D_a\sigma)(x,x)\bigg]
	\,\mathrm{d}\bar\mu
	\\&\qquad\qquad=\,\lambda^{-2}\int_{S_\lambda(0)} \bigg[\bar D_a\bar{\operatorname{tr}}\,\sigma-(\bar D_a\sigma)(\bar\nu,\bar\nu)\bigg]\,\mathrm{d}\bar\mu.
	\end{aligned}
	\end{equation}
\indent Finally, we use integration by parts and the decomposition $a=a^\perp+a^\top$ to find that
	\begin{equation} \label{p6}
	\begin{aligned}
	&2\, \lambda^{-3}\int_{S_{\lambda}(0)}\bigg[\lambda\,(\bar D_a\sigma)(\bar\nu,\bar\nu)-6\,\bar g(a,\bar\nu)\,\sigma(\bar\nu,\bar\nu)+3\,\sigma(\bar\nu,a)\bigg]\,\mathrm{d}\bar\mu
	\\\,&\qquad+\,\lambda^{-2}\int_{S_\lambda(0)} \bar g(\bar\nu,a)\, 
	\bigg[\bar D_{\bar\nu}\bar{\operatorname{tr}}\,\sigma-3\,(\bar D_{\bar\nu}\sigma)(\bar\nu,\bar\nu)\bigg]\,\mathrm{d}\bar\mu
	\\&\qquad\qquad  = \lambda^{-2}\int_{S_\lambda(0)}\bigg[\bar{D}_a\bar{\operatorname{tr}}\,\sigma-(\bar D_a\sigma)(\bar\nu,\bar\nu)-2\,g(a,\bar\nu)\,\bar{\operatorname{tr}}\,\sigma\bigg]\,\mathrm{d}\bar\mu
	\\&\qquad\qquad\qquad +O(\lambda^{-4}).
	\end{aligned}
	\end{equation}
\indent The assertion follows from assembling (\ref{p1}-\ref{p6}).
\end{proof} For the corollary below, recall the definition \eqref{center of mass} of the Hamiltonian center  $C=(C^1,\,C^2,\,C^3)$ of $(M,g)$. 
\begin{coro}
	There holds, as $\lambda\to\infty$, \label{com coro}
	$$
	\lambda\,\xi(\lambda)=C+\frac{1}{128\,\pi}\,\lambda^3\,\int_{S_{\lambda}(\lambda\,\xi(\lambda))}R\,\bar \nu\,\mathrm{d}\bar\mu+o(1).
	$$ 
\end{coro}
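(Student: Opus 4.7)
The plan is to divide the identity in Proposition~\ref{der prop} by $256\,\pi$, recognize the first term on the right as the $R$-contribution appearing in the corollary, and identify the remaining surface integral involving $\sigma$ with the Hamiltonian center of mass $C$ as $\lambda\to\infty$.

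After dividing by $256\,\pi$, one is reduced to proving
\begin{align*}
\frac{1}{32\,\pi}\,\lambda\int_{S_\lambda(0)}\bigl[\bar D\,\bar{\operatorname{tr}}\,\sigma - (\bar D\sigma)(\bar\nu,\bar\nu) - 2\,\lambda^{-1}\,\bar{\operatorname{tr}}\,\sigma\,\bar\nu\bigr]\,\mathrm{d}\bar\mu \;\longrightarrow\; C \qquad\text{as } \lambda\to\infty.
\end{align*}
To do so, I would decompose $g = g_S + \sigma$ with $g_S = (1+|x|^{-1})^4\,\bar g$ the exact Schwarzschild metric of mass $m=2$, and substitute into the Regge--Teitelboim formula~\eqref{center of mass}. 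The pure $g_S$-contribution to the RT integrand is a radial function of $|x|$ multiplied by $\bar\nu^\ell$, and hence integrates to zero on $S_\lambda(0)$ by odd symmetry. The existence of the limit defining $C$ under the assumption~\eqref{weaker center 2.2} is guaranteed by Theorem~\ref{com existence criterion}, so it is legitimate to pass to the limit afterwards.

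It then remains to establish the algebraic identity that the $\sigma$-contribution to $\lambda^{-1}\int_{S_\lambda(0)}[\,\cdot\,]_{\text{RT}}\,\mathrm{d}\bar\mu$ agrees, up to $o(1)$ as $\lambda\to\infty$, with $\lambda\int_{S_\lambda(0)}[\,\cdot\,]_{\text{prop}}\,\mathrm{d}\bar\mu$. I would derive this by applying the Euclidean divergence theorem on the exterior region $\mathbb{R}^3\setminus B_\lambda(0)$ to divergence-free vector fields assembled from $\sigma$ and from $|x|^{-3}\,x$, much as in the manipulations carried out in the proof of Lemma~\ref{willmore linearization}. The decay rates $\sigma = O(|x|^{-2})$ and $\bar D\sigma = O(|x|^{-3})$ ensure that all remainder terms produced during these integrations by parts are $o(1)$ after the $\lambda$-scaling.

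The main obstacle is the book-keeping in the integration-by-parts step: one must organize the various $\sigma$, $\bar D\sigma$, and $x$-contractions coming out of the RT integrand, identify those that reassemble into the proposition's integrand, and verify that the residual terms are genuinely $o(1)$ rather than $O(1)$. In particular, the correction $2\,\lambda^{-1}\,\bar{\operatorname{tr}}\,\sigma\,\bar\nu$ in the proposition should arise naturally as a boundary term from these manipulations, and the Schwarzschild piece must be tracked through the RT integrand to confirm that it cancels exactly rather than merely to leading order.
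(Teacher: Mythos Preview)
Your overall strategy is correct and matches the paper's: divide Proposition~\ref{der prop} by $256\,\pi$, observe that the Schwarzschild part of the Regge--Teitelboim integrand vanishes by parity, and then identify
\[
\frac{1}{32\,\pi}\,\lambda\int_{S_\lambda(0)}\bigl[\bar D\,\bar{\operatorname{tr}}\,\sigma-(\bar D\sigma)(\bar\nu,\bar\nu)-2\,\lambda^{-1}\,\bar{\operatorname{tr}}\,\sigma\,\bar\nu\bigr]\,\mathrm{d}\bar\mu
\]
with the Regge--Teitelboim expression for $C$ written in terms of $\sigma$.

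The one place you overshoot is the mechanism for this last identification. You propose a bulk divergence theorem on $\mathbb{R}^3\setminus B_\lambda(0)$ with divergence-free vector fields built from $|x|^{-3}\,x$, in the style of Lemma~\ref{willmore linearization}. That machinery is unnecessary here: both expressions are surface integrals over the \emph{same} sphere $S_\lambda(0)$, so there is no bulk region to invoke. The paper simply writes $x^j=\lambda\,\bar\nu^j$ on $S_\lambda(0)$, decomposes $e_\ell=e_\ell^\perp+e_\ell^\top$ into normal and tangential parts, and integrates by parts \emph{tangentially on the sphere}. This yields an exact equality between the Regge--Teitelboim integrand and the proposition's integrand at every finite $\lambda$, not merely an asymptotic one, so there is no need to chase $o(1)$ remainders from bulk integrals or boundary terms at infinity. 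Your approach would eventually work, but the tangential route is shorter and eliminates the book-keeping you flagged as the main obstacle.
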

\begin{proof}
	We define the quantities
	\begin{equation*}
	\begin{aligned}
		z^\ell =\,&\frac{1}{32\,\pi}\,\lambda^{-1}\,\int_{S_\lambda(0)}\bigg(\,\sum_{i,\,j=1}^3x^\ell\,x^j\,\bigg[(\partial_i\sigma)(e_i,e_j)-(\partial_j\sigma)(e_i,e_i)\bigg]
		\\& \qquad\qquad\qquad\qquad-\sum_{i=1}^3\bigg[x^i\,\sigma(e_i,e_\ell)-x^\ell\,\sigma(e_i,e_i)\bigg]\bigg)\,\mathrm{d}\bar\mu
	\end{aligned}
\end{equation*}
where $\ell=1,\,2,\,3$. Note that, by \eqref{center of mass}, 
\begin{equation} \label{lim}
\lim_{\lambda\to\infty}z^\ell=C^\ell.
\end{equation} Using integration by parts and the decomposition $e_\ell=e_\ell^\perp +e_\ell^\top$ with respect to $\bar g$, we obtain 	\begin{equation} \label{lim2}
	z^\ell=\frac{1}{32\,\pi}\,\lambda\int_{S_\lambda(0)}\bigg[( \partial_{\ell}\sigma)(\bar\nu,\nu)- \partial_{\ell}\bar{\operatorname{tr}}\,\sigma+2\,\lambda^{-1}\,\bar\nu^\ell\,\bar{\operatorname{tr}}\,\sigma\bigg]\mathrm{d}\mu.
	\end{equation} 
	The assertion follows from \eqref{lim}, \eqref{lim2}, and Proposition \ref{der prop}.
\end{proof}
\begin{proof}[Proof of Theorem \ref{acwt1}] 
	Corollary \ref{com coro} implies that
	$$
	|\lambda\,\xi(\lambda)-C|\leq \frac{1}{128\,\pi}\,\lambda^3\,|\lambda\,\xi(\lambda)-C|^{-1}\int_{S_{\lambda}(\lambda\,\xi(\lambda))}R\,\bar g(\lambda\,\xi(\lambda)-C,\bar\nu)\,\mathrm{d}\bar\mu+o(1).
	$$
	Arguing as in the proof of Lemma \ref{g der lemma} but using the stronger asymptotic conditions \eqref{improved center}, we obtain 
	$$
	\frac{1}{128\,\pi}\,\lambda^3\,|\lambda\,\xi(\lambda)-C|^{-1}\int_{S_{\lambda}(\lambda\,\xi(\lambda))}R\,\bar g(\lambda\,\xi(\lambda)-C,\bar\nu)\,\mathrm{d}\bar\mu\leq o(1).
	$$
	Conversely, \eqref{tilde u estimate} and Lemma \ref{xi estimate} imply that
	\begin{align} 
	|\Sigma_{\xi(\lambda),\lambda}|^{-1}\int_{\Sigma_{\xi(\lambda),\lambda}}x^\ell\,\mathrm{d}\mu=\lambda\,\xi(\lambda)+O(\lambda^{-1}).
	\label{com relation}
	\end{align}
	The assertion  follows from these estimates.
\end{proof}
	\begin{figure}\centering
	\includegraphics[width=0.5\linewidth]{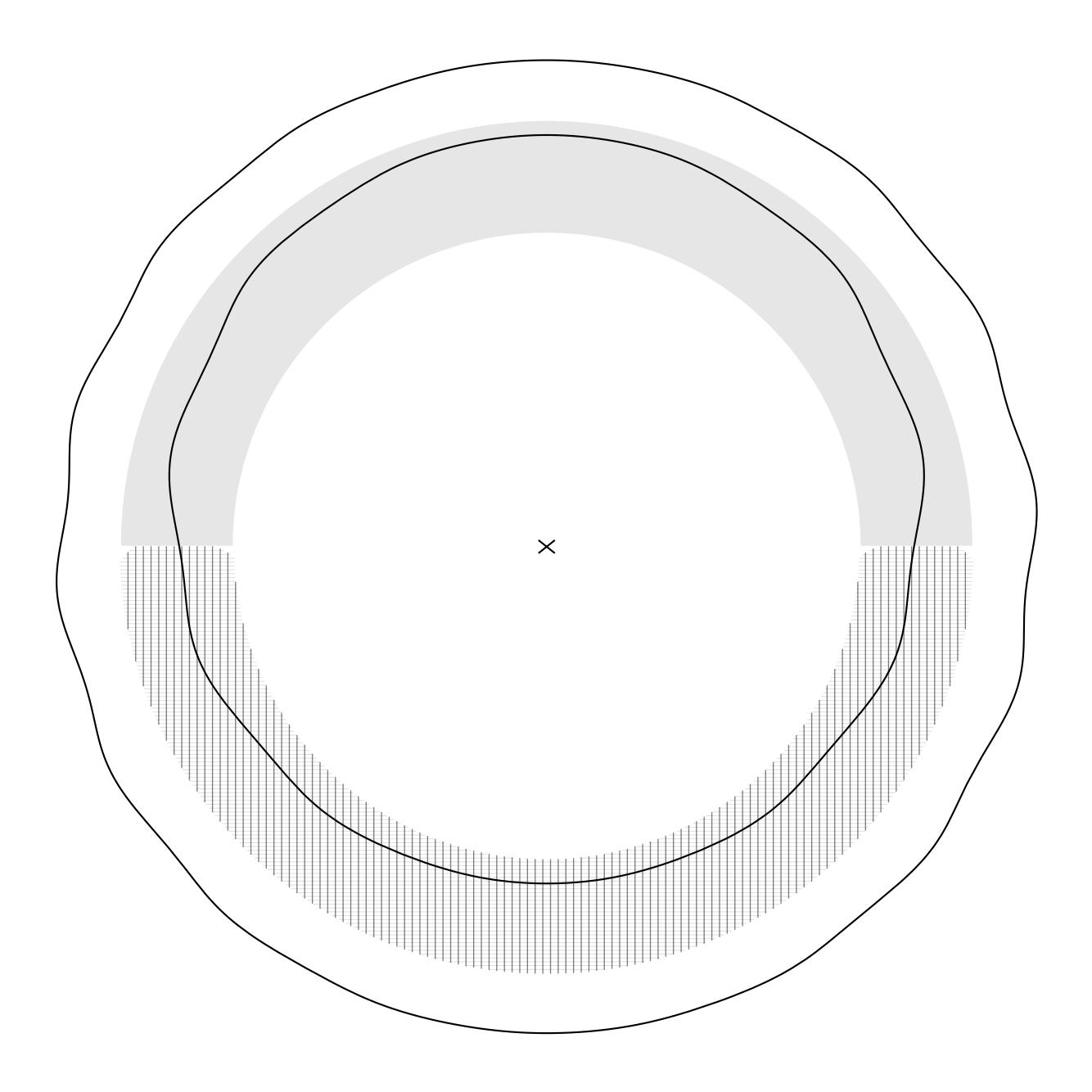}
	\caption{An illustration of the proof of Theorem \ref{com counterexample 2}. The
		 scalar curvature is positive in the shaded region, negative in the
		hatched region, and vanishes elsewhere. The cross marks the Hamiltonian center of mass $C$ in the asymptotically flat chart. The barycenter of the larger sphere  $\Sigma_{\hat\lambda_k,\xi(\hat\lambda_k)}$ agrees with $C$. By contrast, the asymmetric distribution of scalar curvature moves the  barycenter of the smaller sphere $\Sigma_{\lambda_k,\xi(\lambda_k)}$ away from $C$. }
	\label{comccounterfigure}
\end{figure}
\begin{proof}[Proof of Theorem \ref{com counterexample 2}] Let $\chi\in C^\infty(\mathbb{R})$ be such that $\chi(t)=1$ for all $t\in(3,5)$ and $\operatorname{supp}(\chi)\subset[2,6]$. We  define $\eta\in C^\infty(\mathbb{R}^3)$ by
	$$
	\eta(x)=\sum_{k=0}^\infty\chi(10^{-k}\,|x|).
	$$
	Consider the metric
	$$
	g=\left(1+|x|^{-1}-\frac18\,\eta(x)\,x^3\,|x|^{-4}\right)^4\bar g
	$$
	 on $\mathbb{R}^3\setminus\{0\}$. Note that
	$$
	g=\bigg[(1+|x|^{-1})^{-4}+O(|x|^{-3})\bigg]\,\bar g.
	$$
	It follows that the limit in \eqref{center of mass} exists and that $C=0$. \\ \indent Let $k\geq 1$ be an integer and suppose that $x\in\mathbb{R}^3$ with $3<10^{-k}\,|x|<5$. Using that $\eta=1$ near $x$, we compute
	\begin{align} \label{R does not vanish} 
	R(x)=\sum_{i=1}^3\bar D^2_{e_i,e_i}(x^3\,|x|^{-4})=4\,x^3\,|x|^{-6}.
	\end{align} 
	Conversely, if $6<10^{-k}\,|x|<8$, there holds $\eta= 0$ near $x$. We find that
	\begin{align*} 
	R(x)=0.
	\end{align*}
	Let $\lambda_{k}=4\cdot10^k$ and $\hat \lambda_{k}=7\cdot10^k$. 
	Using Lemma \ref{xi estimate}, that $\bar DR=O(|x|^{-6})$, and \eqref{R does not vanish}, we compute
	$$
	\lambda_{k}^3\,\int_{S_{\lambda_{k},\xi(\lambda_{k})}}R\,\bar \nu\,\mathrm{d}\bar\mu=\lambda_{k}^3\,\int_{S_{\lambda_{k}}(0)}R\,\bar \nu\,\mathrm{d}\bar\mu+O(10^{-k})
	=
	\frac{16\,\pi}{3}\,e_3+O(10^{-k}).
	$$
	In conjunction with $C=0$ and Corollary \ref{com coro}, we find
	$$
	\lambda_k\,\xi(\lambda_k)=\frac{1}{24}\,e_3+O(10^{-k}).
	$$
	Likewise, we obtain
	$$
	\hat\lambda_k\,\xi(\hat\lambda_k)=O(10^{-k});
	$$
	see Figure \ref{comccounterfigure}.
	It follows from this and \eqref{com relation} that the limit in \eqref{acwcom def} does not exist.
\end{proof}

\section{Proof of Theorem \ref{general existence thm}}
Throughout this section, we assume that $(M,g)$ is $C^4$-asymptotic to Schwarzschild with mass $m=2$. \\
\indent Let $\delta\in(0,1/2)$. We recall the definitions \eqref{G definition}, \eqref{G1 definition}, and \eqref{G2 definition} of the functions $$G_\lambda,\, G_1,\, G_{2,\lambda}:\{\xi\in\mathbb{R}^3:|\xi|<1-\delta\}\to\mathbb{R}.$$ Moreover, recall from \eqref{G decomposition} that  
	\begin{align*} 
G_\lambda(\xi)=G_1(\xi)+G_{2,\lambda}(\xi)+O(\lambda^{-1}).
\end{align*}
\indent According to Lemma \ref{g1 properties}, the function $G_1$ is strictly convex. We now identify a useful convexity criterion for functions that resemble $G_{2,\lambda}$.    
\begin{lem}
	Let $f\in C^{1}(\mathbb{R}^3)$ be a non-negative function satisfying
	\begin{align}
\sum_{i=1}^3	x^i\,\partial_i(|x|^2\,f)\leq0. \label{exact growth}
	\end{align} 
	For every $\xi_1,\xi_2\in\mathbb{R}^3$ with $|\xi_1|,\,|\xi_2|<1$ and $\lambda>0$ there holds
 \label{monotone gradient lemma}
$$
\int_{S_{\xi_1,\lambda}}\bar g(\bar\nu,\xi_2-\xi_1)\,f\,\mathrm{d}\bar\mu\geq\int_{S_{\xi_2,\lambda}}\bar g(\bar\nu,\xi_2-\xi_1)\,f\,\mathrm{d}\bar\mu.
$$

\end{lem}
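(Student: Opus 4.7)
The plan is to recognize the asserted inequality as monotonicity of the gradient of the function
$$
\Psi(\xi) := \int_{B_\lambda(\lambda\,\xi)} f\,\mathrm{d}\bar v
$$
on $\{\xi\in\mathbb{R}^3:|\xi|<1\}$, and then to establish that $\Psi$ is concave. Substituting $y = x - \lambda\,\xi$ in the integral defining $\Psi$ and then applying the divergence theorem to the constant vector field $f\,v$ with $v := \xi_2 - \xi_1$ gives
$$
(\bar D\,\Psi)(\xi)\cdot v = \lambda\,\int_{S_{\xi,\lambda}} f\,\bar g(\bar\nu,v)\,\mathrm{d}\bar\mu,
$$
so the asserted inequality reduces to $\bigl((\bar D\,\Psi)(\xi_2) - (\bar D\,\Psi)(\xi_1)\bigr)\cdot(\xi_2 - \xi_1) \leq 0$, i.e.~to concavity of $\Psi$.

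Since $|\xi|<1$, the origin lies in the interior of $B_\lambda(\lambda\,\xi)$, so each ray $\{r\,\tau:r>0\}$ with $\tau\in S^2$ exits this ball at a unique radius
$$
r_+(\tau,\xi) = \lambda\,\Bigl(\bar g(\xi,\tau) + \sqrt{1 - |\xi|^2 + \bar g(\xi,\tau)^2}\,\Bigr).
$$
In these spherical coordinates centered at the origin,
$$
\Psi(\xi) = \int_{S^2}\Phi_0(r_+(\tau,\xi),\tau)\,\mathrm{d}\sigma(\tau),\qquad \Phi_0(R,\tau) := \int_0^R r^2\,f(r\,\tau)\,\mathrm{d}r.
$$
Evaluating the hypothesis at $x = r\,\tau$ yields $\partial_r\bigl(r^2\,f(r\,\tau)\bigr)\leq 0$, so $r\mapsto r^2\,f(r\,\tau)$ is non-negative and non-increasing; accordingly, $R\mapsto\Phi_0(R,\tau)$ is both non-decreasing and concave.

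The remaining ingredient is the concavity of $r_+(\tau,\cdot)$ on $\{|\xi|<1\}$. The summand $\lambda\,\bar g(\xi,\tau)$ is affine in $\xi$. The expression $1 - |\xi|^2 + \bar g(\xi,\tau)^2$ has Hessian $2(\tau\otimes\tau - I)$, which is negative semi-definite, and it is strictly positive on $\{|\xi|<1\}$; since $\sqrt{\,\cdot\,}$ is concave and non-decreasing on $(0,\infty)$, the radical is concave, and $r_+(\tau,\cdot)$ is concave as the sum of two concave functions. Composing the concave non-decreasing map $\Phi_0(\cdot,\tau)$ with the concave map $r_+(\tau,\cdot)$ produces a concave function of $\xi$ for each $\tau$, and integration over $S^2$ preserves concavity. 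Hence $\Psi$ is concave and the lemma follows. The only conceptual step is realizing that spherical coordinates centered at the \emph{origin} --- rather than at $\lambda\,\xi$ --- convert the radial hypothesis on $f$ directly into concavity of the integrand in $\xi$; the remaining computations are routine.
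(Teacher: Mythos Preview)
Your proof is correct and takes a genuinely different route from the paper's. Both arguments ultimately exploit the hypothesis as the statement that $r\mapsto r^2 f(r\tau)$ is non-increasing along each ray from the origin, but they deploy this fact differently. The paper argues by direct geometric comparison: it splits each sphere into the hemispheres where $\bar g(\bar\nu,\xi_2-\xi_1)$ is non-negative respectively non-positive, parametrizes them, and relates points on $S_{\xi_2,\lambda}$ to points on $S_{\xi_1,\lambda}$ via a radial scaling $x\mapsto t^{-1}x$ from the origin with $t>1$; the crux is a Jacobian inequality $\dot\theta\,\sin\theta\,\cos\theta\geq t^{-2}\,\sin\zeta\,\cos\zeta$ which, combined with $t^{-2}f(t^{-1}x)\geq f(x)$, gives the result hemisphere by hemisphere. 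You instead recast the inequality as monotonicity of the gradient of $\Psi(\xi)=\int_{B_\lambda(\lambda\xi)} f\,\mathrm{d}\bar v$ and prove $\Psi$ is concave by writing it in spherical coordinates centered at the origin: the radial integral $\Phi_0(\cdot,\tau)$ is concave and non-decreasing by the hypothesis, the exit radius $r_+(\tau,\cdot)$ is concave by an elementary Hessian computation, and composition followed by integration over $S^2$ preserves concavity. Your argument is shorter and more structural, avoiding the somewhat delicate trigonometric manipulations in the paper; the paper's approach is more hands-on and makes the comparison between the two spheres geometrically explicit.
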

\begin{proof} By scaling, we may assume that $\lambda=1$. Moreover, we may assume that $\xi_2\neq\xi_1$. We define the hemispheres 
	$$
	S_+^\ell=\{x\in S_{1}(\xi_\ell):\bar g(\bar\nu,\xi_2-\xi_1)\geq 0\} \qquad \text{and} \qquad 	S_-^\ell=\{x\in S_{1}(\xi_\ell):\bar g(\bar\nu,\xi_2-\xi_1)\leq 0\}
	$$
	where $\ell=1,2$. We choose an orthonormal basis $\{e_1,\,e_2,\,e_3\}$ of $\mathbb{R}^3$ with $e_1\perp \operatorname{span}\{\xi_1,\xi_2\}$ and
	$$
	e_3=\frac{\xi_2-\xi_1}{|\xi_2-\xi_1|}
	$$ 
	and parametrize almost all of $S_2^+$ via
	$$
\Psi:(0,\pi)\times(0,2\,\pi)\to S_2^+ \qquad\text{given by}\qquad 	\Psi(\zeta,\varphi)= \xi_2+(\sin\zeta\,\sin\varphi,\sin\zeta\,\cos\varphi,\cos\zeta).
	$$
	Likewise, we parametrize almost all of $S_1^+$ by 
	$$
	(0,\pi)\times(0,2\,\pi)\to S_1^+\qquad\text{where}\qquad	(\theta,\varphi)\mapsto \xi_1+(\sin\theta\,\sin\varphi,\sin\theta\,\cos\varphi,\cos\theta).
	$$
	\indent It is geometrically evident and straightforward to check  that, given $\zeta$, there is $\theta=\theta(\zeta)$ with $\theta\leq\zeta$ and $t=t(\zeta)>1$ such that
	\begin{align}\label{multipliable equation}
	t\,\left[\xi_1+(\sin\theta\,\sin\varphi,\sin\theta\,\cos\varphi,\cos\theta)\right]=\xi_2+(\sin\zeta\,\sin\varphi,\sin\zeta,\cos\varphi,\cos\zeta); 
	\end{align}
	see Figure \ref{convexity}.
	\begin{figure}\centering
		\includegraphics[width=0.4\linewidth]{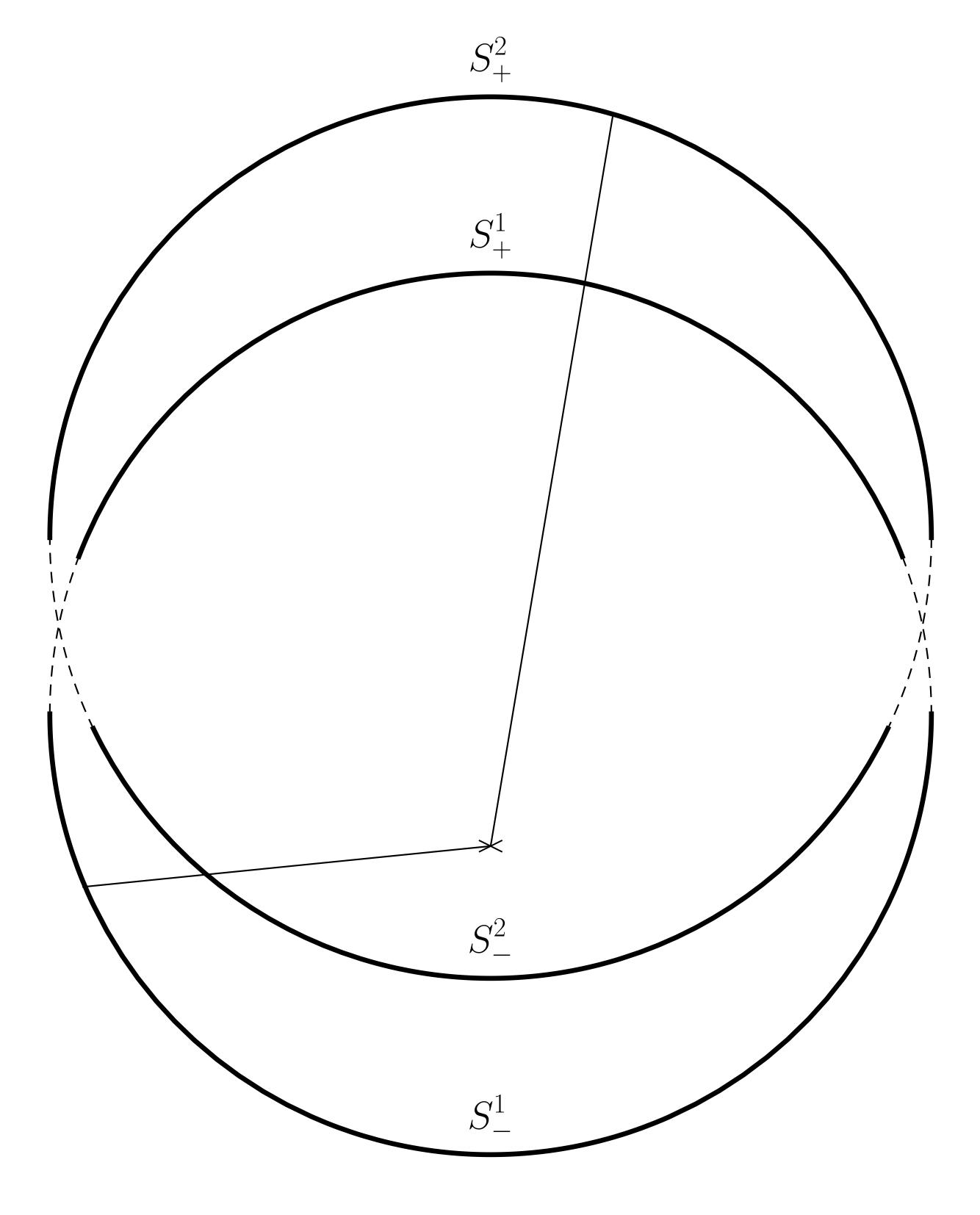}
		\caption{An illustration of the proof of Lemma \ref{monotone gradient lemma}. The function $f$ is compared along the lines connecting $S^1_\pm$ and $S^2_\pm$. The cross marks the origin of $\mathbb{R}^3$.  }
		\label{convexity}
	\end{figure}
		We define
	$
	a=\bar g(\xi_1,e_3)$ and $b=\bar g(\xi_2,e_3).
	$
	Dotting \eqref{multipliable equation} with $e_1$, we obtain
	$$
	t=\frac{\sin\zeta}{\sin\theta}.
	$$
Likewise, dotting \eqref{multipliable equation} with $e_3$, we find that
	$$
	t=\frac{\cos\zeta+b}{\cos\theta+a}.
	$$
	In particular, we obtain the relation
	\begin{align} \label{differentiable equation}
	\frac{\cos\zeta+b}{\cos\theta+a}=\frac{\sin\zeta}{\sin\theta}.
	\end{align}
	Differentiating \eqref{differentiable equation} with respect to $\zeta$, we find
	$$
	-\frac{\sin\zeta}{\cos\theta+a}+t\,\frac{\sin\theta}{\cos\theta+a}\,\dot\theta=\frac{\cos\zeta}{\sin\theta}-t\,\frac{\cos\theta}{\sin\theta}\,\dot\theta.
	$$
	Equivalently,
	$$
	\dot\theta = t^{-1}\,\frac{\cos\theta\,\cos\zeta+\cos\zeta\,a+\sin\zeta\,\sin\theta}{1+a\,\cos\theta}.
	$$
Using $\zeta\geq\theta$, we obtain that
	$$
	\cos\theta\,\frac{\cos\theta\,\cos\zeta+\cos\zeta\,a+\sin\zeta\,\sin\theta}{1+a\,\cos\theta}\geq \cos\zeta.
	$$

It follows that
	$$
	\dot\theta\,\sin\theta\,\cos\theta\geq t^{-2}\, \sin\zeta\,\cos\zeta.
	$$
\indent 	Using that $f$ is non-negative and \eqref{exact growth}, it follows that
	\begin{equation}  \label{c1}
	\begin{aligned}
	&\int_{S^1_{+}}f\,\bar g(\bar\nu,\xi_2-\xi_1)\,\mathrm{d}\bar\mu -\int_{S^2_{+}}f\,\bar g(\bar\nu,\xi_2-\xi_1)\,\mathrm{d}\bar\mu
\\	& \qquad\geq |\xi_2-\xi_1|\int_0^{2\,\pi}\int_0^\pi \left[t^{-2}\,f(t^{-1}\,\Psi(\zeta,\varphi))-f(\Psi(\zeta,\varphi))\right]\,\sin\zeta\,\cos\zeta\,\mathrm{d}\zeta\,\mathrm{d}\varphi
\\& \qquad \geq 0.
	\end{aligned}
	\end{equation}
	The same argument shows that
	\begin{equation} \label{c2}
	\int_{S^1_{-}}f\,\bar g(\bar\nu,\xi_2-\xi_1)\,\mathrm{d}\bar\mu -\int_{S^2_{-}}f\,\bar g(\bar\nu,\xi_2-\xi_1)\,\mathrm{d}\bar\mu\geq 0.
	\end{equation}
\indent 	The assertion of the lemma follows from \eqref{c1} and \eqref{c2}.
\end{proof}
\begin{coro}
	Let $\delta\in(0,1/2)$ and suppose that $f\in C^{1}(\mathbb{R}^3)$ satisfies, as $x\to\infty$,
$$
f\geq-o(|x|^{-4})\qquad\text{and}\qquad \sum_{i=1}^3x^i\,\partial_i(|x|^2\,f)\leq o(|x|^{-2}).
$$
There holds,  \label{monotone gradient}  uniformly  for all $\xi_1,\,\xi_2\in\mathbb{R}^3$ with $|\xi_1|,\,|\xi_2|<1-\delta$ as $\lambda\to\infty$,
$$
\int_{S_{\xi_1,\lambda}}f\,\bar g(\bar\nu,\xi_2-\xi_1)\,\mathrm{d}\bar\mu\geq \int_{S_{\xi_2,\lambda}}f\,\bar g(\bar\nu,\xi_2-\xi_1)\,\mathrm{d}\bar\mu-o(\lambda^{-2}).
$$

\end{coro}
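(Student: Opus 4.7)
The plan is to reduce the corollary to Lemma \ref{monotone gradient lemma} by perturbing $f$ with a small radial correction that enforces the pointwise hypotheses of the lemma exactly, and then to show that the error introduced by this correction is $o(\lambda^{-2})$ when integrated over $S_{\xi,\lambda}$.

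First, from the assumed asymptotics of $f$ I would extract non-increasing moduli $\psi, \phi : [1, \infty) \to [0, \infty)$ with $\psi(r), \phi(r) \to 0$ as $r \to \infty$ such that, for all sufficiently large $|x|$,
$$
|x|^4 \, f(x) \geq -\psi(|x|) \qquad \text{and} \qquad |x|^2 \sum_{i = 1}^3 x^i \, (\partial_i (|x|^2 \, f))(x) \leq \phi(|x|).
$$
After smoothing, we may take $\psi, \phi \in C^1$ without changing the bounds. Set
$$
\Phi(r) = \int_r^\infty \frac{\phi(s)}{s^3} \, \mathrm{d}s, \qquad \text{so} \qquad 0 \leq \Phi(r) \leq \frac{\phi(r)}{2\,r^2},
$$
and define, for $|x|$ large,
$$
f_1(x) = f(x) + \frac{\Phi(|x|)}{|x|^2} + \frac{\psi(|x|)}{|x|^4},
$$
extended arbitrarily to all of $\mathbb{R}^3$. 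By construction, $f_1 \geq 0$ for $|x|$ large, and a direct computation using $\Phi'(r) = -\phi(r)/r^3$ together with the fact that $r \mapsto \psi(r)/r^2$ is non-increasing (since $\psi \geq 0$ is non-increasing) gives
$$
\sum_{i = 1}^3 x^i \, \partial_i(|x|^2 \, f_1)(x) \leq \frac{\phi(|x|)}{|x|^2} - \frac{\phi(|x|)}{|x|^2} + 0 \leq 0.
$$

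Since $S_{\xi, \lambda} \subset \{x \in \mathbb{R}^3 : |x| \geq \delta \, \lambda\}$ whenever $|\xi| < 1 - \delta$, modifying $f_1$ inside a fixed compact set to make $f_1 \geq 0$ and the radial monotonicity hold globally does not affect the integrals in question once $\lambda$ is sufficiently large. Hence Lemma \ref{monotone gradient lemma} applies to $f_1$ and yields
$$
\int_{S_{\xi_1, \lambda}} f_1 \, \bar g(\bar \nu, \xi_2 - \xi_1) \, \mathrm{d}\bar\mu \geq \int_{S_{\xi_2, \lambda}} f_1 \, \bar g(\bar \nu, \xi_2 - \xi_1) \, \mathrm{d}\bar\mu.
$$
Next I would estimate the error. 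Using $|\bar g(\bar\nu, \xi_2 - \xi_1)| \leq 2$, $|f_1 - f| = o(|x|^{-4})$ on $S_{\xi, \lambda}$, $|x| \geq \delta \, \lambda$ there, and $|S_{\xi, \lambda}| = 4 \, \pi \, \lambda^2$, one obtains
$$
\bigg| \int_{S_{\xi, \lambda}} (f_1 - f) \, \bar g(\bar\nu, \xi_2 - \xi_1) \, \mathrm{d}\bar\mu \bigg| = o(\lambda^{-2})
$$
uniformly in $\xi, \xi_1, \xi_2$ with $|\xi|, |\xi_1|, |\xi_2| < 1 - \delta$. Combining the last two displays via the identity $f = f_1 - (f_1 - f)$ yields the claim.

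The main technical point is the construction of $\Phi$ and the verification that $f_1$ satisfies the pointwise hypotheses of Lemma \ref{monotone gradient lemma} with the correction still of size $o(|x|^{-4})$, so that its contribution to the sphere integrals is $o(\lambda^{-2})$. The choice of primitive-decay rate for $\Phi$ is what engineers the cancellation $\sum x^i \, \partial_i \Phi(|x|) = -\phi(|x|)/|x|^2$ against the positive upper bound on $\sum x^i \, \partial_i(|x|^2 \, f)$; no further substantial obstacle is anticipated.
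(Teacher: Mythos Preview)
Your proof is correct and follows the same approach as the paper: perturb $f$ by a non-negative radial term of size $o(|x|^{-4})$ so that the hypotheses of Lemma \ref{monotone gradient lemma} hold exactly on the region containing the spheres, then bound the resulting $o(\lambda^{-2})$ error on the surface integrals. The paper carries this out more tersely, using simply $f_\epsilon = f + \epsilon\,|x|^{-4}$ for suitable $\epsilon>0$, but your explicit construction with the moduli $\psi,\phi$ and the primitive $\Phi$ achieves the same end.
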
 
\begin{proof}
	This follows from Lemma \ref{monotone gradient lemma} applied to the function 
	$
	f_\epsilon=f+\epsilon\,|x|^{-4}
	$ for appropriate choice of $\epsilon>0$.

\end{proof}
\begin{proof}[Proof of Theorem \ref{general existence thm}]
	First, suppose that $R\geq- o(|x|^{-4})$. The argument presented in the proof of \cite[Theorem 5]{acws} shows that there  
	 exists a family $\{\Sigma(\kappa):\kappa\in(0,\kappa_0)\}$ of  area-constrained Willmore spheres $\Sigma(\kappa)\subset M$ such that \eqref{constrained Willmore quantity} holds with parameter $\kappa$ and such that \eqref{required properties} holds.
	 \\ \indent To prove the uniqueness statement, suppose that
	 $$
	 \sum_{i=1}^3x^i\,\partial_i(|x|^2\,R)\leq o(|x|^{-2})
	 $$
	 and let $\delta\in(0,1/2)$. It follows from \eqref{G decomposition}, Lemma \ref{g1 properties}, and Corollary \ref{monotone gradient} that $G_\lambda$ is strictly convex provided $\lambda>1$ is sufficiently large. In particular, $G_\lambda$ has at most one critical point. We can now argue exactly as in the proof of \cite[Theorem 8]{acws}.
\end{proof}
\begin{rema}
	Suppose that $$\sum_{i=1}^3x^i\,\partial_i(|x|^2\,R)\leq o(|x|^{-2})$$ and let \label{positioning remark} $\xi(\lambda)\in\mathbb{R}^3$ be the unique critical point of $G_\lambda$  constructed in the proof of Theorem \ref{general existence thm}. Using Lemma \ref{G lemma}, we find that
	$$
\xi(\lambda)=2\,\lambda^2\,	|(\bar DG_1)({\xi(\lambda)})|^{-1}\,\int_{S_{\xi(\lambda),\lambda}}R\,\bar\nu\,\mathrm{d}\bar\mu+O(\lambda^{-1}).
	$$ 
	In particular, up to lower-order terms, the positioning of the asymptotic family by area-constrained Willmore surfaces \eqref{acw foliation} is determined by the asymptotic distribution of  scalar curvature.
\end{rema}
\section{Proof of Theorem \ref{existence counter} and Theorem \ref{slow divergence counter}} 
We recall the definitions \eqref{G1 definition} of $G_1$ and \eqref{G2 definition} of  $G_{2,\lambda}$. A direct computation shows that
\begin{align} \label{dg1 counter intro}
(\bar DG_1)({\xi})=2\,\pi\,\left[8\,(1-|\xi|)^{-2}+40\,(1-|\xi|)^{-1}-24\,\log(1-|\xi|)\right]\xi+O(1)		\end{align} 
as $|\xi|\nearrow1$. \\ \indent 
To prove Theorem \ref{existence counter} and Theorem \ref{slow divergence counter}, we construct suitable metrics $g$  on $\mathbb{R}^3\setminus\{0\}$ such that the Schwarzschild contribution \eqref{dg1 counter intro} cancels with that from $G_{2,\lambda}$ for suitable $\lambda>1$. We then adjust $g$ accordingly to force the non-existence respectively existence of  large area-constrained Willmore spheres. \\ \indent 
First, we choose a function $\chi:\mathbb{R}\to[0,1]$ with $\operatorname{supp}(\chi)\subset(1/2,4)$  and $\chi(t)=1$ if $t\in[3/4,3]$. \\ \indent Let $k$ and $\ell$ be non-negative integers. We define $\chi_k:\mathbb{R}\to[0,1]$ by
\begin{equation} \label{chi k def intro} 
\chi_k(t)=\begin{dcases} &\chi(t) \qquad\quad\,\,\,\,\text{if } t\leq 1,\\
&1 \quad\,\qquad\quad\,\,\,\, \text{if }1<t<k^2, \\ 
&\chi(k^{-2}\,t) \qquad\text{if } t>k^2.
\end{dcases}
\end{equation}
Note that
\begin{align} \label{chi support intro}
\operatorname{supp}(\chi_k)\subset[1/2,4\,k^2].
\end{align}
Let $
\lambda_{k,\ell}=k^2\,10^{\ell^2}. 
$ 
Given $a_1,\,a_2,\,a_3,\,a_4\in\mathbb{R}$, we define
\begin{equation*}
\begin{aligned}  
\eta_{k,\ell}=\,& \chi_k(10^{-\ell^2}\,|x|)\bigg[a_1\,|x|^{-2}+a_2\,\lambda_{k,\ell}^{-1}\,|x|^{-1}\,\big(\log \lambda_{k,\ell}-\log|x|\big)+a_3\,\lambda_{k,\ell}^{-2}\,\big(\log|x|-\log \lambda_{k,\ell}\big)\\&\qquad\qquad\qquad\quad +a_4\,\lambda_{k,\ell}^{-5}\,(x^3)^3\bigg].
\end{aligned} 
\end{equation*} 
Note that
\begin{align} \label{log estimate intro} 
\lambda_{k,\ell}^{-1}\,|x|^{-1}\,|\log|x|-\log \lambda_{k,\ell}|< 100\,|x|^{-2}
\qquad \text{and} \qquad 
\lambda_{k,\ell}^{-2}\,|\log|x|-\log\lambda_{k,\ell}|< 100\,|x|^{-2}
\end{align} 
provided $1/2\,k^{-2}\leq\lambda_{k,\ell}^{-1}\,|x|\leq 4$. Using \eqref{chi k def intro} and \eqref{log estimate intro}, we find that, for every multi-index $J$, there are universal constants $c_J>1$ such that 
\begin{align}  \label{uniform eta estimate}
|\partial_J\eta_{k,\ell}|\leq c_J\,(|a_1|+|a_2|+|a_3|+|a_4|)\,|x|^{-2-|J|}.
\end{align} 
\indent Let $x\in\mathbb{R}^3$ with $k^{-2}\,\leq\lambda_{k,\ell}^{-1}\,|x|\leq 2$. By \eqref{chi support intro}, we have
\begin{align} 
\chi_k(10^{-j^2}\,|x|)=\delta_{\ell j} \label{chikij intro}
\end{align} 
for every $j$ provided $\ell$ is sufficiently large. Moreover, we compute
\begin{align} \label{laplacian}
\sum_{i=1}^3(\bar D^2_{e_i,e_i} \eta_{k,\ell})(x) =2\,a_1\,|x|^{-4}+a_2\,\lambda_{k,\ell}^{-1}\,|x|^{-3}+a_3\,\lambda_{k,\ell}^{-2}\,|x|^{-2}+6\,a_4\,\lambda_{k,\ell}^{-5}\,x^3.
\end{align} 
Fix $\xi\in\mathbb{R}^3$ with $|\xi|<1$.  We compute\begin{equation}
	\label{der g2l 1}
\begin{aligned} 
&\lambda_{k,\ell}^2\,	\int_{S_{\xi,\lambda_{k,\ell}}} \left[2\,a_1\,|x|^{-4}+a_2\,\lambda_{k,\ell}^{-1}\,|x|^{-3}+a_3\,\lambda_{k,\ell}^{-2}\,|x|^{-2}\right]\bar\nu\,\mathrm{d}\bar\mu
\\&\qquad =
\int_{S_{1}(\xi)} \left[2\,a_1\,|x|^{-4}+a_2\,|x|^{-3}+a_3\,\,|x|^{-2}\right]\bar\nu\,\mathrm{d}\bar\mu
\\&\qquad = -2\,\pi\,\left [a_1\,(1-|\xi|)^{-2}+(a_1+a_2)\,(1-|\xi|)^{-1}+(a_1-a_3)\,\log(1-|\xi|)\right]\xi\\&\qquad \qquad+\sum_{i=1}^3a_i\,f_i(\xi)\,\xi
\end{aligned}
\end{equation}
where $f_1,\,f_2,\,f_3\in C^\infty(B_1(0))$  are bounded. Likewise, 
\begin{align} \label{der g2l 2}
\lambda_{k,\ell}^2\,\int_{S_{\xi,\lambda_{k,\ell}}}6\,a_4\,\lambda_{k,\ell}^{-5}\,x^3\,\bar\nu\,\mathrm{d}\mu=8\,\pi\,a_4\,e_3.	\end{align} 
 \indent Now, suppose that
$$
g=\left(1+|x|^{-1}+\frac12\,\eta_{k,\ell}\right)^4\bar g.
$$
Note that $$
R=-4\,\sum_{i=1}^3\bar D^2_{e_i,e_i}\eta_{k,\ell} 
$$
and recall the definition \eqref{G2 definition} of $G_{2,\lambda_{k,\ell}}$. Assume that  $|\xi|<1-k^{-2}$. Using \eqref{chikij intro}, \eqref{laplacian},  \eqref{der g2l 1}, and \eqref{der g2l 2}, we conclude
\begin{equation} \label{dg2 counter intro}
\begin{aligned}
(\bar DG_{2,\lambda_{k,\ell}})({\xi})=\,&-16\,\pi\,\bigg [a_1\,(1-|\xi|)^{-2}+(a_1+a_2)\,(1-|\xi|)^{-1}+(a_1-a_3)\,\log(1-|\xi|)\bigg]\xi
\\&+64\,\pi\,a_4\,e_3
\\&+8\,\sum_{i=1}^3a_i\,f_i(\xi)\,\xi
\end{aligned} 
\end{equation} 
for every sufficiently large $\ell$.
We emphasize the structural similarity of \eqref{dg1 counter intro} and \eqref{dg2 counter intro}.
 \begin{proof}[Proof of Theorem \ref{existence counter}]
 Let
		$$
				g=\left(1+|x|^{-1}+\frac12\sum_{i=1}^\infty\eta_{i,i}\right)^4\,\bar g.
		$$
		Using \eqref{uniform eta estimate}, we find that $g$ is $C^k$-asymptotic to the Schwarzschild metric with mass $m=2$ for every $k\geq 2$.  \\ \indent  We choose $a_1=1$, $a_2=4$ and $a_3=4$. Let $\delta\in(0,1/2)$. Recalling \eqref{G decomposition} and using \eqref{dg2 counter intro} and \eqref{dg1 counter intro}, we obtain,  uniformly for every $\xi\in\mathbb{R}^3$ with $|\xi|<1-\delta$ as $i\to\infty$,
		\begin{align} \label{dg counter}
		(\bar D G_{\lambda_{i,i}})({\xi}) =64\,\pi\,a_4\,e_3+O(1).
		\end{align}
 \indent Suppose that there exists a family 
		$\{\Sigma(\kappa):\kappa\in(0,\kappa_0)\}$ of area-constrained Willmore spheres $\Sigma\subset\mathbb{R}^3\setminus\{0\}$ enclosing the origin and  satisfying \eqref{constrained Willmore quantity}  with parameter $\kappa$ such that 
		$$
		\lim_{\kappa\to0} \rho(\Sigma(\kappa))=\infty,\qquad 
		\limsup_{\kappa\to0} \rho(\Sigma(\kappa))^{-1}\,\lambda(\Sigma(\kappa))<\delta^{-1},\qquad\text{and}\qquad \lim_{\kappa\to0} \int_{\Sigma(\kappa)}|\hcirc|^2\,\mathrm{d}\mu=0.
		$$
		Arguing as in the proof of \cite[Theorem 8]{acws}, we find that the function $G_{\lambda_{i,i}}$ has a critical point $\xi_i$ with
		$|\xi_i|<1-\delta$ for every sufficiently large integer $i$.  This is  incompatible with \eqref{dg counter} if $a_4>1$ is chosen sufficiently large.
	\end{proof} 
For the proof of Theorem \ref{slow divergence counter}, we argue in two steps.
\begin{lem} \label{sd lemma}
There are   constants $c_J>1$ such that the following holds. 
	For every $\delta\in(0,1/2)$, there exists a metric $g$ on $\mathbb{R}^3\setminus\{0\}$ that is $C^k$-asymptotic to Schwarzschild with mass $m=2$ for every $k\geq 2$ with
	\begin{align} \label{uniform decay}
\limsup_{x\to\infty}|x|^{2+|J|}	\,|\partial_J \,\sigma|<c_J
	\end{align} 
	for every multi-index $J$
 that satisfies the following property. \\ \indent  There exists a sequence $\{\Sigma_i\}_{i=1}^\infty$ of area-constrained Willmore spheres $\Sigma_i\subset \mathbb{R}^3\setminus\{0\}$ such that
	$$
	\lim_{i\to\infty} \rho(\Sigma_i)=\infty
	$$
and 	$\lambda(\Sigma_i)^{-1}\,\Sigma_i$ converges smoothly to a round sphere
	while
	$$
	 \rho(\Sigma_i)<\delta\,\lambda(\Sigma_i) \qquad\text{and} \qquad  
	m_H(\Sigma_i)>2
	$$
	for all $i$.
\end{lem}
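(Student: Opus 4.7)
The plan is to modify the construction from the proof of Theorem \ref{existence counter}, tuning the parameters $a_1, a_2, a_3, a_4$ in $\eta_{k, \ell}$ so that the function $G_{\lambda_{k, \ell}}$ retains a critical point $\xi$ with $|\xi|$ close to $1$, instead of no critical point.

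Fix a small $c > 0$ and a large $|a_3|$ with $a_3 < 0$, both depending on $\delta$, and take $a_1 = 1 + c$, $a_2 = 4 - c$, $a_4 = 0$. Substituting into \eqref{dg1 counter intro} and \eqref{dg2 counter intro}, the coefficient of $(1 - |\xi|)^{-1}$ in the radial component of $\bar D G_\lambda$ cancels exactly, the coefficient of $(1 - |\xi|)^{-2}$ becomes the moderate negative number $-8c$, and the coefficient of $\log(1 - |\xi|)$ picks up a large positive contribution from $a_3$. Since $a_4 = 0$, each $\eta_{1, \ell}$ is rotationally symmetric and so is $G_\lambda$; it therefore suffices to analyze the radial profile $r \mapsto G_\lambda(r e_1)$. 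Its derivative is positive for small $r > 0$ by the strict convexity of $G_1$ at $0$ (Lemma \ref{g1 properties}) and tends to $-\infty$ as $r \to 1^-$ because $-8c (1 - r)^{-2}$ eventually dominates the positive log term. The intermediate value theorem then produces a critical radius $r^*$, and balancing the two competing terms yields $1 - r^* \sim \sqrt{c / |a_3|}$ up to logarithmic factors, so the choice $c / |a_3| < \delta^2$ ensures $r^* \in (1 - \delta, 1)$.

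Setting $g = (1 + |x|^{-1} + \frac{1}{2} \sum_{\ell = 1}^\infty \eta_{1, \ell})^4 \bar g$ with these parameters, estimate \eqref{uniform eta estimate} makes $g$ a $C^k$-asymptotically Schwarzschild metric with $m = 2$ satisfying \eqref{uniform decay}. Taking $\lambda_\ell = \lambda_{1, \ell}$, $\xi_\ell = r^* e_1$, and $\Sigma_\ell = \Sigma_{\xi_\ell, \lambda_\ell}$ from Proposition \ref{LS prop} produces area-constrained Willmore spheres with $\rho(\Sigma_\ell) = \lambda_\ell (1 - r^*) + O(1) \to \infty$, $\rho(\Sigma_\ell) < \delta \lambda(\Sigma_\ell)$, and $\lambda_\ell^{-1} \Sigma_\ell$ converging smoothly to a round sphere by the Lyapunov-Schmidt construction.

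Finally, the identity $m_H(\Sigma_{\xi, \lambda}) = 2 - (32 \pi)^{-1} \lambda^{-1} G_\lambda(\xi) + o(\lambda^{-1})$ reduces $m_H(\Sigma_\ell) > 2$ to $G_{\lambda_\ell}(\xi_\ell) < 0$. A divergence-theorem computation at $\xi = 0$ gives $G_{\lambda_\ell}(0) = G_1(0) - 32 \pi (6 + c - a_3) + o(1)$, while integrating the radial derivative from $0$ to $r^*$ yields a correction of size $12 \pi |a_3| + O(1)$ coming from the $\log(1 - r)$ term in the radial derivative combined with the universal value $\int_0^1 r \log(1 - r) \, dr = -3/4$. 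This is the most delicate step: the success of the argument hinges on the strict inequality between the two coefficients of $|a_3|$, namely $32 \pi > 12 \pi$, which is a universal feature of the construction and allows the net value $G_{\lambda_\ell}(\xi_\ell) = G_1(0) - 20 \pi |a_3| + O(1)$ to be made negative by taking $|a_3|$ sufficiently large.
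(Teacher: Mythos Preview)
Your proposal has two genuine gaps.

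\textbf{The uniformity of $c_J$.} The order of quantifiers in the statement forces the constants $c_J$ to be chosen \emph{before} $\delta$, hence independently of $\delta$. You take $|a_3|$ large depending on $\delta$, so estimate \eqref{uniform eta estimate} only yields $|\partial_J\sigma|\le c_J(\delta)\,|x|^{-2-|J|}$ with $c_J(\delta)\to\infty$ as $\delta\to 0$; this violates \eqref{uniform decay}. The paper sidesteps the issue by keeping $(a_1,a_2,a_3,a_4)=(2,3,5,0)$ fixed and varying only the integer $k$ in $\chi_k$: enlarging $k$ widens the plateau of $\chi_k$ without affecting the derivative bounds in \eqref{uniform eta estimate}, so the $c_J$ are universal.

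\textbf{The range of validity of \eqref{dg2 counter intro}.} Formula \eqref{dg2 counter intro} was derived under the hypothesis $|\xi|<1-k^{-2}$, which ensures that the whole sphere $S_{\xi,\lambda_{k,\ell}}$ lies in the region where $\chi_k(10^{-\ell^2}|x|)=1$ and the clean Laplacian identity \eqref{laplacian} applies. With $k=1$ this range is essentially empty (at best $|\xi|\lesssim 1/4$), so you cannot invoke it at $r^*\in(1-\delta,1)$ for small $\delta$. The paper's mechanism for placing the critical point near $|\xi|=1$ is different from your $(1-r)^{-2}$ vs.\ $\log(1-r)$ balance: with $(a_1,a_2,a_3)=(2,3,5)$ the $(1-|\xi|)^{-1}$ and $\log(1-|\xi|)$ terms in $\bar D(G_1+G_{2,\lambda_{k,i}})$ cancel exactly, leaving the radial derivative $\sim -16\pi(1-|\xi|)^{-2}$ throughout $|\xi|<1-2k^{-2}$; the sign then flips back to positive only because $R$ vanishes in the gap between consecutive bumps, where $\bar D G_{2,\lambda_{k,i}}=O(k^8)$ and the Schwarzschild term $\bar D G_1$ dominates. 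Thus the location of the critical point is governed by the width parameter $k$, not by the coefficients $a_i$, which is precisely what allows $c_J$ to stay uniform.
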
 
\begin{proof}
Let $k$ be a positive integer
	and define the metric
$$
g=\left(1+|x|^{-1}+\frac12\sum_{i=1}^\infty\eta_{k,i}\right)^4\bar g.
$$
Note that \eqref{uniform eta estimate} implies \eqref{uniform decay}. \\ \indent 
We choose $a_1=2$, $a_2=3$, $a_3=5$, and $a_4=0$. Using \eqref{dg1 counter intro} and \eqref{dg2 counter intro}, we find that,  uniformly for every $\xi\in\mathbb{R}^3$ with $|\xi|<1-k^{-2}$ as $k\to\infty$, 
	\begin{align}  \label{derivative sd}
\sum_{j=1}^3\xi^j\,(\partial_j [G_{1}+G_{2,\lambda_{k,i}}])({\xi}) =-16\,\pi\,(1-|\xi|)^{-2}+O(1)
\end{align} 
  provided $i$ is sufficiently large. Recalling \eqref{G decomposition}, we conclude that for every large $k$  there holds
\begin{align} \label{sd 1}
\sum_{j=1}^3\xi^j\,(\partial_j G_{\lambda_{k,i}})({\xi}) <0
\end{align}
for every  sufficiently large $i$ and every $\xi\in\mathbb{R}^3$ with $|\xi|=1-2\,k^{-2}$. 
\\ \indent By contrast,  it follows from \eqref{chi support intro} that $R(x)=0$  if $10^{-2\,i}<\lambda_{k,i}^{-1}\,|x|<1/2\,k^{-2}$. In conjunction with the estimate $R=O(|x|^{-4})$, we conclude from \eqref{G2 definition} that, as $i\to\infty$ for every $\xi\in\mathbb{R}^3$ with $1-1/2\,k^{-2}<|\xi|<1-10^{-2\,i}$,
$$
(\bar D G_{2,\lambda_{k,i}})({\xi})=O(k^8).
$$ 
 Recalling \eqref{dg1 counter intro} and using \eqref{G decomposition}, we conclude that there is $\delta(k)\in(0,k^{-2})$ such that
\begin{align} \label{sd 2}
\sum_{j=1}^3\xi^j\,(\partial_j G_{\lambda_{k,i}})(\xi)>0
\end{align} 
for every $\xi\in\mathbb{R}^3$ with $|\xi|=1-\delta(k)$ and every  sufficiently large integer $i$. Together with the fact that $g$ is rotationally symmetric, \eqref{sd 1} and \eqref{sd 2} imply that for every $i$ sufficiently large, $G_{\lambda_{k,i}}$ has a local minimum $\xi_i\in\mathbb{R}^3$ with $1-2\,k^{-2}<|\xi_i|<1$. \\
\indent Finally, we observe that $G_{\lambda_{k,i}}(0)=O(1)$. Using   \eqref{derivative sd} and \eqref{G decomposition}, we conclude that $G_{\lambda_{k,i}}(\xi_i)<0$ for every sufficiently large $i$.   The assertions of the lemma follow from Proposition \ref{LS prop}, \eqref{G definition}, and the definition of the Hawking mass \eqref{hawking mass}.
\end{proof} 

\begin{proof}[Proof of Theorem \ref{slow divergence counter}]
	Using Lemma \ref{sd lemma}, we may choose a sequence $\{g_{k}\}_{k=1}^\infty$ of Riemannian metrics $g_k$  on $\mathbb{R}^3\setminus\{ 0\}$ that are $C^k$-asymptotic to Schwarzschild with mass $m=2$ for every $k\geq 2$ and satisfy \eqref{uniform decay} such that the following holds. There is a sequence $\{\Sigma_k\}_{k=1}^\infty$ of spheres $\Sigma_k\subset \mathbb{R}^3\setminus \{0\}$ with the following four properties.
	\begin{itemize}
		\item[$\circ$] For every positive integer $k$, there holds 	\begin{align} \label{sd ratio} 
		\rho(\Sigma_{k+1})>10\,\Theta(\Sigma_k)
		\end{align} 
		where $\Theta(\Sigma_k)=\sup\{|x|:x\in \Sigma_k\}$
		is the outer radius of $\Sigma_k$.
		\item[$\circ$] 	$\Sigma_k$  is an area-constrained Willmore surface with Hawking mass $m_H(\Sigma_k)>2$ with respect to $g_k$.
\item[$\circ$] 		$\lambda(\Sigma_k)^{-1}\,\Sigma_k$ converges smoothly to a round sphere.
\item[$\circ$] There holds $\rho(\Sigma_k)<k^{-1}\,	\lambda(\Sigma_k)$ for every positive integer $k$.
	\end{itemize}
 \indent  Now, we choose a smooth function $\gamma:\mathbb{R}\to[0,1]$ with $\operatorname{supp}(\gamma)\subset[1/3,3]$ and $\gamma(t)=1$ for $t\in[1/2,2]$ and define $\gamma_k:\mathbb{R}\to[0,1]$
	\begin{align} \label{sd chi}
	\gamma_k(t)=\begin{dcases}&\gamma(\rho(\Sigma_k)^{-1}\,t) \,\qquad\text{if } t<\rho(\Sigma_k) \\
	&1\,\,\,\,\,\qquad\qquad\,\qquad \text{if } \rho(\Sigma_k)\leq t \leq \Theta(\Sigma_k)\\ &
	\gamma(\Theta(\Sigma_k)^{-1}\,t)\qquad \text{if } t>\Theta(\Sigma_k).
		\end{dcases}
	\end{align} 
	By \eqref{sd ratio}, there holds $\operatorname{supp}(\gamma_k)\cap\operatorname{supp}(\gamma_j)=\emptyset$ whenever $k\neq j$. Consider the Riemannian metric 
	$$
	g=(1+|x|^{-1})^4\,\bar g+\sum_{k=0}^\infty \gamma_k(|x|)\,(g_k-(1+|x|^{-1})^4\,\bar g)
	$$
	on $\mathbb{R}^3\setminus\{0\}$. 	Using \eqref{uniform decay} and \eqref{sd chi}, we find that $g$ is $C^k$-asymptotic to Schwarzschild with mass $m=2$ for every $k\geq 2$. Moreover, there holds $g=g_k$ near $\Sigma_k$. The assertions follow.
\end{proof} 

\begin{appendices}
	\section{The Hamiltonian center of mass}
	In this section, we recall some facts on the Hamiltonian center of mass of an asymptotically flat manifold $(M,g)$. \\ \indent
	Let $(M,g)$ be a complete, non-compact Riemannian $3$-manifold with integrable  scalar curvature $R$. Given an integer $k\geq 2$ and  $\tau>1/2$, we say that $(M,g)$ is $C^k$-asymptotically flat of rate $\tau$  if there is a non-empty compact set whose complement in $M$ is diffeomorphic to $\{x\in\mathbb{R}^3:|x|>1\}$ with,  for every multi-index $J$ with $|J|\leq k$ and as $x\to\infty$,
	$$
	g=\bar g+\sigma\qquad\text{where}\qquad \partial_J\sigma=O(|x|^{-\tau-|J|})
	$$
	  in this asymptotically flat chart. We usually fix such an asymptotically flat chart and use it as reference for statements on the decay of quantities. \\ \indent The  mass $m$ and the Hamiltonian center of mass $C=(C^1,\,C^2,\,C^3)$ of $(M,g)$ are given by \eqref{ADM mass} and \eqref{center of mass}, respectively. The limit in \eqref{ADM mass} is  well-defined for every such manifold $(M,g)$. The limits in \eqref{center of mass} exist if the metric $g$ satisfies additional asymptotic symmetry conditions.
	\begin{thm}[{\cite[Theorem 2.2]{Huang2}}] Suppose that $(M,g)$ is $C^2$-asymptotically flat of rate $\tau>1/2$ with,  for every multi-index $J$ with $|J|\leq2$ and as $x\to\infty$, \label{com existence criterion}
		\begin{align}
	\partial_J\left[g(x)-g(-x)\right]&=O(|x|^{-1-\tau-|J|}), \label{asymptotic symmetry of metric} \\
				R(x)-R(-x)&=O(|x|^{-7/2-\tau}). \notag
		\end{align}
		  Then the Hamiltonian center of mass \eqref{center of mass} of $(M,g)$ is well-defined.
	\end{thm}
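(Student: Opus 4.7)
The strategy is to express the integrand defining each component of the center of mass as the flux of a vector field on $\{|x|>1\}$, apply Stokes' theorem to pass to a bulk integral over an annulus, and exploit the parity assumptions on $g$ and $R$ to verify the Cauchy property as the outer radius tends to infinity.

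First, since $x^j=\lambda\,\bar\nu^j$ on $S_\lambda(0)$, the integrand in \eqref{center of mass} equals, after pulling out the factor $\lambda^{-1}$,
\begin{equation*}
\sum_j \bar\nu^j\,\Psi^\ell_j, \qquad \Psi^\ell_j = x^\ell\sum_i\bigl[(\partial_i g)(e_i,e_j)-(\partial_j g)(e_i,e_i)\bigr] - g(e_j,e_\ell) + \delta_{j\ell}\sum_i g(e_i,e_i).
\end{equation*}
A direct computation of $\sum_j \partial_j\Psi^\ell_j$ shows that the first-order contributions arising from differentiating the factor $x^\ell$, namely $\sum_i \partial_i g_{i\ell}-\partial_\ell(\sum_i g_{ii})$, are exactly cancelled by those coming from differentiating the correction $-g(e_j,e_\ell)+\delta_{j\ell}\sum_i g(e_i,e_i)$. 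Hence
\begin{equation*}
\bar{\operatorname{div}}\,\Psi^\ell = x^\ell\sum_{i,j}\bigl[\partial_i\partial_j g_{ij}-\partial_j\partial_j g_{ii}\bigr] = x^\ell\,(R-\mathcal Q),
\end{equation*}
where $\mathcal Q$ collects the contributions to the scalar curvature that are quadratic or higher in $\sigma$ and its derivatives and is therefore schematically of the form $\mathcal Q=O(|\sigma|\,|\partial^2\sigma|)+O(|\partial\sigma|^2)=O(|x|^{-2-2\tau})$. Denoting by $F^\ell(\lambda)$ the sequence whose limit defines $C^\ell$, Stokes' theorem then yields
\begin{equation*}
16\,\pi\,m\,[F^\ell(\lambda_2)-F^\ell(\lambda_1)] = \int_{B_{\lambda_2}\setminus B_{\lambda_1}} x^\ell\,R\,\mathrm{d}\bar v\;-\;\int_{B_{\lambda_2}\setminus B_{\lambda_1}} x^\ell\,\mathcal Q\,\mathrm{d}\bar v.
\end{equation*}

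To show the right-hand side is Cauchy I would exploit the symmetry of the annulus under $x\mapsto -x$ and decompose every integrand into its even and odd parts. Since $x^\ell$ is odd, $x^\ell\,R_{\text{even}}$ has odd total parity and integrates to zero, leaving only $\int_{B_{\lambda_2}\setminus B_{\lambda_1}} x^\ell\,R_{\text{odd}}\,\mathrm{d}\bar v$. By the hypothesis, $|R_{\text{odd}}(x)|\leq \tfrac12|R(x)-R(-x)|=O(|x|^{-7/2-\tau})$, and so $|x^\ell\,R_{\text{odd}}|=O(|x|^{-5/2-\tau})$, which is absolutely integrable on $\mathbb{R}^3\setminus B_1$ precisely because $5/2+\tau>3$, i.e., because $\tau>1/2$. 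Dominated convergence then forces this contribution to the difference $F^\ell(\lambda_2)-F^\ell(\lambda_1)$ to vanish as $\lambda_1,\lambda_2\to\infty$.

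The analogous treatment of the $x^\ell\,\mathcal Q$-term is the main technical point of the argument. Writing $\sigma=\sigma_{\text{even}}+\sigma_{\text{odd}}$, the assumption \eqref{asymptotic symmetry of metric} gives $\sigma_{\text{odd}}=O(|x|^{-1-\tau})$ with the corresponding improvement on its derivatives. Expanding $\mathcal Q$ as a sum of pieces of the form $\sigma_A\,\partial^2\sigma_B$ and $\partial\sigma_A\,\partial\sigma_B$ with $A,B\in\{\text{even},\text{odd}\}$ and tracking the total parity of each piece multiplied by the odd factor $x^\ell$, one finds that every resulting term either has odd total parity (and integrates to zero over the symmetric annulus) or else contains at least one odd-parity factor $\sigma_{\text{odd}}$ or $\partial\sigma_{\text{odd}}$ that provides one extra power of $|x|^{-1}$ of decay. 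The surviving integrands are therefore $O(|x|^{-2-2\tau})$ and hence absolutely integrable on $\mathbb{R}^3\setminus B_1$ again precisely when $\tau>1/2$, so that dominated convergence completes the proof. The main obstacle is exactly this parity bookkeeping: without \eqref{asymptotic symmetry of metric}, the generic term in $x^\ell\,\mathcal Q$ is only $O(|x|^{-1-2\tau})$, which fails to be integrable for $\tau\leq1$, so the symmetry improvement on the odd part of $\sigma$ is indispensable.
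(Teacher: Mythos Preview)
The paper does not supply its own proof of this theorem: it is stated in the appendix with the citation \cite[Theorem 2.2]{Huang2} and no argument beyond that reference. Your sketch is essentially the standard proof due to Huang---rewrite the flux integral as a divergence via the linearized scalar curvature identity, apply the Euclidean divergence theorem on annuli, and use the Regge--Teitelboim parity hypotheses to render the surviving bulk terms absolutely integrable. Your parity bookkeeping is correct: after splitting $\sigma=\sigma_{\text{even}}+\sigma_{\text{odd}}$, every term in $x^\ell\mathcal Q$ that does not vanish by oddness on the symmetric annulus carries exactly one odd factor and hence the extra $|x|^{-1}$ decay needed for integrability when $\tau>1/2$. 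So your proposal is a valid reconstruction of the cited result, and there is nothing in the present paper to compare it against.
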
 
\begin{rema} \label{com existence} If $(M,g)$ is $C^2$-asymptotic to Schwarzschild, then \eqref{asymptotic symmetry of metric} holds for every $\tau\in(1/2,1]$.
\end{rema}

	\section{The geometric center of mass by large stable constant mean curvature spheres}
The study of existence  of large stable constant mean curvature spheres in asymptotically flat manifolds has been pioneered by G.~Huisken and S.-T.~Yau \cite{HuiskenYauCM}. There is a large body of important subsequent work; see e.g.~\cite{Ye,Metzger,Huang} and the references therein. The following general existence result has been established by C.~Nerz \cite{Nerz}. 
	\begin{thm}[{\cite[Theorems 5.1 and 5.2]{Nerz}}] Suppose that  $(M,g)$ is $C^2$-asymptotically flat of rate $\tau>1/2$ with positive mass $m>0$.	Then there exists a number $H_0>0$ and a foliation of the complement of a compact subset of $M$
		\begin{align} \label{cmc foliation}
		\{\Sigma_{CMC}(H):H\in(0,H_0)\}
		\end{align}
	such that $\Sigma_{CMC}(H)$ is a stable constant mean curvature sphere with mean curvature  $H$ for every $H\in(0,H_0)$. \label{cmc existence} 
	\end{thm}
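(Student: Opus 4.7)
The plan is to follow the now-standard Lyapunov--Schmidt strategy pioneered by G.~Huisken and S.-T.~Yau \cite{HuiskenYauCM} and refined in later works, taking particular care with the low regularity assumed here. For large $r>1$ and $\xi\in\mathbb{R}^3$ with $|\xi|<r/2$, write prospective surfaces as normal graphs $\Sigma_{\xi,r}(u)=\{x+u(x)\,\nu(x):x\in S_r(\xi)\}$ of small functions $u\in C^{2,\alpha}(S_r(\xi))$. The mean curvature equation $H=H_0$ for a prescribed constant $H_0\approx 2/r$ becomes a quasilinear elliptic equation in $u$ whose linearization at $u=0$ is the Jacobi operator
$$
L_{\xi,r}=-\Delta-|h|^2-\operatorname{Ric}(\nu,\nu).
$$
I would first analyze $L_{\xi,r}$ on the approximating coordinate sphere, viewed as a small perturbation of the round Euclidean sphere, for which the kernel is exactly the three-dimensional space of translations $\mathrm{span}\{\bar\nu^1,\bar\nu^2,\bar\nu^3\}$. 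The Schwarzschild-type leading term of $g$ lifts these Euclidean zero-modes to small positive eigenvalues proportional to $m\,r^{-3}$, and produces a uniformly invertible operator on the orthogonal complement, with a controlled inverse bound of order $r^2$.

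Next I would perform a Lyapunov--Schmidt reduction: splitting $C^{2,\alpha}(S_r(\xi))=\Lambda_{\mathrm{trans}}\oplus\Lambda_{\mathrm{trans}}^\perp$ where $\Lambda_{\mathrm{trans}}=\mathrm{span}\{\bar\nu^1,\bar\nu^2,\bar\nu^3\}$, the contraction mapping theorem applied on $\Lambda_{\mathrm{trans}}^\perp$ (using the invertibility above and routine nonlinear estimates that only require $C^2$ control of $\sigma$) produces, for every sufficiently large $r$ and every admissible $\xi$, a unique small solution $u=u_{\xi,r}\in\Lambda_{\mathrm{trans}}^\perp$ of the equation modulo $\Lambda_{\mathrm{trans}}$. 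The remaining obstruction is a three-dimensional equation
$$
\mathcal{F}_r(\xi):=\operatorname{proj}_{\Lambda_{\mathrm{trans}}}\bigl[H(u_{\xi,r})-H_0\bigr]=0.
$$
An expansion of $\mathcal{F}_r$ identifies its leading behavior as essentially the ADM momentum densities; with the positivity of the mass entering through the dominant Schwarzschild term, one shows $\mathcal{F}_r$ has exactly one zero $\xi(r)$, producing a constant mean curvature sphere $\Sigma_{CMC}(H)$ with $H=H_0$ for every $H\in(0,H_0)$. Varying $H_0$ gives the family; stability follows from the fact that the first nontrivial eigenvalue of $L$ on $\Sigma_{CMC}(H)$ is strictly positive thanks to the mass contribution.

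To upgrade the family to a foliation one differentiates the reduced equation with respect to $H_0$ (equivalently with respect to $r$), obtaining a linear equation of the form $L\,\dot u=1+\text{lower order}$. Solvability and positivity of $\dot u$, which expresses that each leaf is strictly enclosed by the next, follow from the same invertibility statement together with the maximum principle applied to the lower-order error. Smooth dependence on $H$ of the whole construction then yields the $C^1$-foliation. The principal obstacle in this program at the regularity stated in the theorem is the first step: obtaining the uniform bound $\|L_{\xi,r}^{-1}\|\lesssim r^2$ together with the nonlinear estimates needed for the contraction argument using only $C^2$ control on $\sigma$ of rate $\tau>1/2$; this is Nerz's main technical contribution \cite[\S3--4]{Nerz} and replaces the stronger decay and regularity assumptions used in the earlier literature.
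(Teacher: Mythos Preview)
The paper does not supply a proof of this statement. Theorem~\ref{cmc existence} is quoted verbatim from Nerz~\cite[Theorems~5.1 and~5.2]{Nerz} as background in Appendix~\ref{cmc appendix}; the only further comment is the remark that the authors give short alternative proofs based on Lyapunov--Schmidt reduction in the separate paper~\cite{cmc}. So there is nothing here to compare your proposal against.

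That said, your outline is broadly the right shape for such a Lyapunov--Schmidt argument, and is in the spirit of what the authors indicate they do in~\cite{cmc}. One point to watch: you lean on a ``Schwarzschild-type leading term'' to lift the translational zero-modes to strictly positive eigenvalues of order $m\,r^{-3}$, but the hypothesis of the theorem is only $C^2$-asymptotic flatness of rate $\tau>1/2$, with no Schwarzschild expansion available. In that generality the mass enters only through the flux integral~\eqref{ADM mass}, and isolating its effect on the low eigenvalues of $L_{\xi,r}$ (and on the reduced map $\mathcal{F}_r$) is genuinely delicate --- this is precisely where Nerz's analysis in~\cite{Nerz} does real work, as you correctly flag at the end. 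Your sketch would go through more directly in the $C^2$-asymptotic-to-Schwarzschild setting, but as written it understates the difficulty of the general case.
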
 
In \cite[\S 4]{HuiskenYauCM},   G.~Huisken and S.-T.~Yau have proposed to associate a geometric center of mass $$C_{CMC}=(C_{CMC}^1,C_{CMC}^2,C_{CMC}^3)$$ to  the foliation \eqref{cmc foliation}    where
	\begin{align} \label{CMCcom def} 
	C_{CMC}^\ell=\lim_{H\searrow0} |\Sigma_{CMC}(H)|^{-1}\int_{\Sigma_{CMC}(H)} x^\ell\,\mathrm{d}\mu,
	\end{align}
	provided the limit on the right-hand side exists for $\ell=1,\,2,\,3$. The existence of these limits has been studied for instance by J.~Metzger \cite{Metzger}, J.~Corvino and H.~Wu \cite{CorvinoWu}, L.-H.~Huang \cite{Huang2}, or J.~Metzger and the first-named author \cite{EichmairMetzgerIso}. In \cite{Nerz}, C.~Nerz has proven that the geometric center of mass \eqref{CMCcom def} coincides with the Hamiltonian center of mass \eqref{center of mass} of $(M,g)$, provided $g$ satisfies an additional asymptotic symmetry assumption.
	\begin{thm}[{\cite[Theorem 6.3]{Nerz}}]

	Suppose that, for every multi-index $J$ with $|J|\leq 2$ and as $x\to\infty$ ,  \label{nerzthm}
		\begin{align*}
		\partial_J\left[g(x)-g(-x)\right]&=O(|x|^{-1/2-\tau-|J|}),  \\ R(x)-R(-x)&=O(|x|^{-3- \tau}).
		\end{align*} 
		Then the limits in \eqref{center of mass} exist if and only if the limits in \eqref{CMCcom def} exist in which case $C=C_{CMC}$.
	\end{thm}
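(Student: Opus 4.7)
My plan is to derive an asymptotic identity expressing the barycenter of $\Sigma_{CMC}(H)$ as a flux integral matching the Hamiltonian integrand in \eqref{center of mass}, and then exploit the asymptotic symmetry hypotheses to identify its limit with $C$. First, I would invoke Theorem \ref{cmc existence} together with the standard $C^2$-estimates for the CMC foliation from \cite{Nerz} to parametrize each leaf as a normal graph
$$
\Sigma_{CMC}(H) = \{x + u_H(x)\,\bar\nu : x \in S_{\lambda(H)}(\xi(H))\}
$$
over a Euclidean coordinate sphere, where $\xi(H) \in \mathbb{R}^3$ is uniquely determined by the requirement that $u_H$ be $L^2$-orthogonal to the translation modes on $S_{\lambda(H)}(\xi(H))$, and $\lambda(H)\to\infty$ as $H\searrow 0$. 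A direct expansion then yields
$$
|\Sigma_{CMC}(H)|^{-1}\int_{\Sigma_{CMC}(H)} x^\ell\,\mathrm{d}\mu = \xi^\ell(H) + o(1),
$$
so that $\lim_{H\searrow 0}C^\ell_{CMC}$ exists if and only if $\lim_{H\searrow 0}\xi^\ell(H)$ does.

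Next, I would derive a Pohozaev-type identity of the form
\begin{equation*}
16\,\pi\,m\,\xi^\ell(H) = \int_{S_{\lambda(H)}(\xi(H))} \mathcal{I}^\ell[g]\,\mathrm{d}\bar\mu + o(1)\qquad\text{as }H\searrow 0,
\end{equation*}
where $\mathcal{I}^\ell[g]$ denotes the Hamiltonian integrand from \eqref{center of mass}. To produce this identity, I would test the CMC condition on $\Sigma_{CMC}(H)$ against the translation vector field $\partial_\ell$; the constancy of $H$ together with the divergence theorem applied in the asymptotic region outside $\Sigma_{CMC}(H)$, combined with the twice-contracted Bianchi identity, converts the Ricci-type contributions into precisely the combination of first derivatives of $\sigma$ that appears in $\mathcal{I}^\ell[g]$. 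Nerz's estimates on the graph function $u_H$ and on the traceless second fundamental form $\mathring h$ of $\Sigma_{CMC}(H)$ ensure that the remaining contributions are $o(1)$.

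Finally, the symmetry hypotheses allow one to replace $S_{\lambda(H)}(\xi(H))$ by $S_{\lambda(H)}(0)$: the difference is a volume integral over an annular region, and splitting the integrand into its parts that are even and odd under $x\mapsto-x$, the hypotheses $\partial_J[g(x)-g(-x)] = O(|x|^{-1/2-\tau-|J|})$ and $R(x)-R(-x) = O(|x|^{-3-\tau})$ suffice to kill the borderline contribution. This yields $\lim\xi^\ell(H) = C^\ell$ whenever either limit exists, proving the equivalence and the equality. The main obstacle is the Pohozaev step: the decay rate $\tau > 1/2$ is borderline for the Hamiltonian integrand even to be formally well-defined, so every boundary term that arises from the integration by parts must be controlled sharply, and one must carefully track how the a priori possibly unbounded position shift $\xi(H)$ interacts with the geometric quantities on $\Sigma_{CMC}(H)$ in order to close the estimate with an honest $o(1)$ error.
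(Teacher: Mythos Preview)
The paper does not prove this theorem. It is stated in Appendix~\ref{cmc appendix} as a citation from Nerz's work \cite[Theorem~6.3]{Nerz}, with no proof or proof sketch provided; the appendix merely collects background results on the CMC foliation and its center of mass for the reader's convenience. There is therefore nothing in the paper against which to compare your proposal.

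As for the proposal itself viewed as an outline of Nerz's argument: the overall architecture---graph parametrization with center $\xi(H)$, a balancing identity obtained by testing the CMC condition against translations, and use of the Regge--Teitelboim-type symmetry to recenter the flux integrals---is broadly in the spirit of how such results are proved in the literature (see also \cite{Huang2,cmc}). The delicate point you flag at the end is the right one: at decay rate $\tau>1/2$ the center $\xi(H)$ need not be bounded a priori, so one must first show $|\xi(H)|=o(\lambda(H))$ from the balancing identity before the recentering step and the $o(1)$ error control can close. Your sketch does not make explicit how that bootstrap is arranged, and in practice this is where most of the work lies.
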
\label{cmc appendix}
\begin{rema}
If $(M,g)$ is $C^2$-asymptotic to Schwarzschild, then the assumptions of \label{Schwarzschild gcom rema}  Theorem \ref{nerzthm} are satisfied; see \cite[Theorem 2]{Huang2}. 
\end{rema}

\begin{rema}  \label{cmc positioning}
	If $(M,g)$ is $C^5$-asymptotic to Schwarzschild with non-negative scalar curvature $R$ satisfying
	$$
	\sum_{i=1}^3 x^i\,\partial_i(|x|^2\,R)\leq 0,
$$ 
	then the leaves of the foliation \eqref{cmc foliation} are the only closed stable constant mean curvature surfaces $\Sigma\subset M$ with  large enclosed volume; see  \cite{acws} and also \cite{CCE,chodoshfar,ChodoshEichmairGlobal} for earlier work in this direction. According to Theorem \ref{nerzthm} and Remark \ref{Schwarzschild gcom rema}, the positioning of large stable constant mean curvature spheres is therefore governed by the Hamiltonian center of mass \eqref{center of mass} of $(M,g)$.
	\end{rema}  
\begin{rema}
	In \cite{cmc}, the authors give short alternative proofs for Theorem \ref{cmc existence} and Theorem \ref{nerzthm} based on Lyapunov-Schmidt reduction. The analysis carried out there is much less technical  than the one  in Section \ref{com section}. 
\end{rema}
\section{Lyapunov-Schmidt reduction} \label{LS appendix}
We review the construction of the foliation by large area-constrained Willmore spheres \eqref{acw foliation}  in \cite{acws}. Throughout, we will assume that $(M,g)$ is $C^4$-asymptotic to Schwarzschild with scalar curvature $R$; see \eqref{schwarzschild sigma decay}. 
\\ \indent Let $\delta\in(0,1/2)$. Given $\lambda>1$ and $\xi\in\mathbb{R}^3$, we consider the spheres \begin{align} \label{spheres def}S_{\xi,\lambda}=\{x\in\mathbb{R}^3:|x-\lambda\,\xi|=\lambda\}.\end{align} Given a function $u\in \Sigma_{\xi,\lambda}$, let \begin{align} \Sigma_{\xi,\lambda}(u)=\{x+\lambda^{-1}\,u(x)\,(x-\lambda\,\xi):x\in S_{\xi,\lambda}\}\label{radial graph}\end{align} be the Euclidean graph of $u$ over $S_{\xi,\lambda}$. Moreover, let $\Lambda_1(S_{\xi,\lambda})$ be the space of first spherical harmonics of $S_{\xi,\lambda}$ and $\Lambda_1^\perp(S_{\xi,\lambda})\subset C^\infty(S_{\xi,\lambda})$ be its orthogonal complement with respect to $L^2(S_{\xi,\lambda})$. 
\\ \indent
When stating that an error term  $$\mathcal{E}=O(\lambda^{-l_1}\,|\xi|^{l_3})+O(\lambda^{-l_2})$$ may be differentiated with respect to $\xi$  where $l_1,\,l_2>0$ and $l_3>1$, we mean that
$$
\bar{D} \mathcal{E}= O(\lambda^{-l_1}\,|\xi|^{l_3-1})+O(\lambda^{-l_2}),
$$
where differentiation is with respect to $\xi$. When stating that $\mathcal{E}$ may be differentiated with respect to $\lambda$, we mean that
$$
\mathcal{E}'= O(\lambda^{-l_1-1}\,|\xi|^{l_3})+O(\lambda^{-l_2-1})
$$\indent
 Let $\Sigma\subset M$ be a closed surface with $\lambda(\Sigma)=\lambda$, i.e.~$|\Sigma|=4\,\pi\,\lambda^2$. The normalized Willmore energy of $\Sigma$ is given by
\begin{align*}
F_\lambda(\Sigma)=\lambda^2\left(\int_{\Sigma}H^2\,\mathrm{d}\mu-16\,\pi-64\,\pi\,\lambda^{-1}\right).
\end{align*} Note that area-constrained Willmore surfaces $\Sigma\subset M$ are area-constrained critical points of $F_\lambda$. Moreover, recall from e.g.~\cite[Appendix A]{acws} that such surfaces are either minimal or satisfy the constrained Willmore equation
\begin{align*}
\kappa(\Sigma)\,W(\Sigma)=H(\Sigma)
\end{align*}
where 
\begin{align*}
 -W(\Sigma)=\Delta H+(|\hcirc|^2+\operatorname{Ric}(\nu,\nu))\,H
\end{align*}
and
\begin{align} \label{kappa function}
\kappa(\Sigma)=\left(\int_\Sigma H^2\,\mathrm{d}\mu\right)^{-1}\int_{\Sigma} \left[|\nabla H|^2-H^2\,|\hcirc|^2-H^2\,\operatorname{Ric}(\nu,\nu)\right]\mathrm{d}\mu.
\end{align}
\indent 
The Legendre polynomials $P_0,\,P_1,\,P_2,\dots$ are defined via a generating function.
Given $s\in[0,1]$ and $t\in[0,1)$, there holds
\begin{align*}
(1-2\,s\,t+t^2)^{-\frac12}=\sum_{\ell=0}^\infty P_l(s)\,t^\ell. 
\end{align*} 
The next proposition follows from \cite[Proposition 17]{acws}, \cite[Lemma 20]{acws}, and \cite[Lemma 21]{acws}.
\begin{prop}
	There are constants $\lambda_0>1$, $c>1$, and $\epsilon>0$ depending only on $(M,g)$ and $\delta\in(0,1/2)$ such that for every $\xi\in\mathbb{R}^3$ with $|\xi|<1-\delta$  and every $\lambda>\lambda_0$, there exists $u_{\xi,\lambda}\in \Lambda_1^\perp(S_{\xi,\lambda})$ and $\kappa_{\xi,\lambda}\in\mathbb{R}$  such that the following hold. The surface \begin{align} \label{sigma xi lambda def} \Sigma_{\xi,\lambda}=\Sigma_{\xi,\lambda}(u_{\xi,\lambda})
	\end{align} has area $|\Sigma_{\xi,\lambda}|=4\,\pi\, \lambda^2$. Moreover, $\Sigma_{\xi,\lambda}$ is an area-constrained Willmore surface with parameter $\kappa_{\xi,\lambda}$ if and only if $\xi$ is a critical point of the function  \label{LS prop}
	\begin{align}
	G_\lambda:\{\xi\in\mathbb{R}^3:|\xi|<1-\delta \}\to\mathbb{R}\qquad\text{given by} \qquad G_\lambda(\xi)=F_\lambda(\Sigma_{\xi,\lambda}). \label{G definition}
	\end{align}
\indent 	There holds,  uniformly for all $\xi\in\mathbb{R}^3$ with $|\xi|<1-\delta$ as $\lambda\to\infty$,
	\begin{align}
		u_{\xi,\lambda}&=-2+4\sum_{\ell=2}^\infty \frac{|\xi|^{\ell}}{\ell}\,P_l\left(-|\xi|^{-1}\,\bar{g}(y,\xi)\right)+O(\lambda^{-1}), \label{u estimate} \end{align}
			 and
		\begin{align}
	\kappa_{\xi,\lambda}&=4\,\lambda^{-3}+O(\lambda^{-4}). \label{kappa estimate}
	\end{align} 
The expansion	\eqref{u estimate} may be differentiated four times in spatial directions. The expansion \eqref{kappa estimate} may be differentiated once with respect to $\lambda$. \\ \indent 
	If  $\Sigma_{\xi,\lambda}(u)$ with $u\in \Lambda^\perp_1(S_{\xi,\lambda})$ is an area-constrained Willmore surface with  $|\Sigma_{\xi,\lambda}(u)|=4\,\pi\, \lambda^2$ and
	\begin{align*}
	|u|+\lambda\,|\nabla u|+\lambda^2\,|\nabla^2 u|+\lambda^3\,|\nabla^3 u|+\lambda^4\,|\nabla^4 u|&<\epsilon\,\lambda, \\\lambda^3\,|\kappa(\Sigma_{\xi,\lambda}(u))|&<\epsilon\,\lambda,
	\end{align*}
	then $u=u_{\xi,\lambda}$.
\end{prop}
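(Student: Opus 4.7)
The plan is to perform a Lyapunov-Schmidt reduction for the area-constrained Willmore equation around the reference family $\{S_{\xi,\lambda}\}$. The guiding observation is that, in exact Schwarzschild, the concentric round spheres $\{|x|=r\}$ are area-constrained Willmore surfaces whose linearization is, up to rescaling, the operator $\Delta_{S^2}+2$ on the unit sphere, whose kernel is the space $\Lambda_1$ of first spherical harmonics. Hence on the translated spheres $S_{\xi,\lambda}$ the equation has a three-dimensional obstruction living in $\Lambda_1(S_{\xi,\lambda})$, while the $\Lambda_1^\perp$-projected equation can be solved by the implicit function theorem. The three-dimensional reduction will turn out, by the variational structure, to coincide with $\bar DG_\lambda$.

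I would first set up the constrained Willmore operator
\[\mathcal{F}(u,\kappa)=-\Delta H-(|\hcirc|^2+\operatorname{Ric}(\nu,\nu))H-\kappa H\]
as a smooth map from a neighborhood of $0$ in $\Lambda_1^\perp(S_{\xi,\lambda})\cap C^{4,\alpha}(S_{\xi,\lambda})\times\mathbb{R}$ to $C^{0,\alpha}(S_{\xi,\lambda})$, with all geometric quantities computed for $\Sigma_{\xi,\lambda}(u)$ in $(M,g)$ and the area constraint $|\Sigma_{\xi,\lambda}(u)|=4\,\pi\,\lambda^2$ enforced by simultaneously solving for $\kappa$. Because $(M,g)$ is $C^4$-asymptotic to Schwarzschild with $m=2$ and $|\xi|<1-\delta$ stays bounded away from $1$, the linearization at $u=0,\,\kappa=4\,\lambda^{-3}$ is, to leading order, the rescaled Schwarzschild operator whose restriction to $\Lambda_1^\perp$ is uniformly invertible, with the $\sigma$-perturbation subleading in a suitable weighted norm. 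A standard contraction-mapping argument then yields unique small $(u_{\xi,\lambda},\kappa_{\xi,\lambda})\in\Lambda_1^\perp\times\mathbb{R}$ solving the $\Lambda_1^\perp$-projection of $\mathcal{F}=0$ under the area constraint; smooth dependence on $\xi$ is built into the IFT, and the estimate $\kappa_{\xi,\lambda}=4\,\lambda^{-3}+O(\lambda^{-4})$ follows by evaluating the Lagrange multiplier on the Schwarzschild round sphere of mass $m=2$.

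For the explicit Legendre expansion of $u_{\xi,\lambda}$, I would identify its leading term with the radial-graph representation, over $S_{\xi,\lambda}$, of the Schwarzschild-round sphere $\{|x|=\lambda-2\}$, which has area $4\,\pi\,\lambda^2$ up to lower order and solves the area-constrained Willmore equation in the Schwarzschild background. Parametrizing a point of $S_{\xi,\lambda}$ as $\lambda\,\xi+\lambda\,y$ with $y\in S^2$ and solving $|\lambda\,\xi+(\lambda+u)\,y|=\lambda-2$ for $u$ produces an explicit formula involving $\sqrt{1+2\,\bar g(y,\xi)+|\xi|^2}$; expanding via the generating function
\[(1-2\,s\,h+h^2)^{-1/2}=\sum_{\ell=0}^\infty P_\ell(s)\,h^\ell\]
with $s=-|\xi|^{-1}\bar g(y,\xi)$ and $h=|\xi|$, followed by termwise integration in $h$, yields the series $-2+4\sum_{\ell=2}^\infty\frac{|\xi|^\ell}{\ell}P_\ell(-|\xi|^{-1}\bar g(y,\xi))$. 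The $\ell=0$ contribution is the constant $-2$ that adjusts the area from $4\,\pi\,\lambda^2$ to $4\,\pi\,(\lambda-2)^2$ to leading order, the $\ell=1$ term is discarded because it lies in $\Lambda_1$ and is absorbed into the choice of $\xi$, and the terms $\ell\geq 2$ are degree-$\ell$ zonal spherical harmonics belonging to $\Lambda_1^\perp$. The $O(\lambda^{-1})$ remainder absorbs both the $\sigma$-correction and higher-order IFT corrections.

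Finally, for the variational reduction and the uniqueness clause, the crucial point is that, up to the area constraint, $\mathcal{F}(u,\kappa)$ is the $L^2$-gradient of $F_\lambda(\Sigma_{\xi,\lambda}(u))$ with respect to $u$. Once the $\Lambda_1^\perp$-projection has been solved, the remaining $\Lambda_1$-component of $\mathcal{F}(u_{\xi,\lambda},\kappa_{\xi,\lambda})$ equals, up to a nonvanishing constant factor, $\bar DG_\lambda(\xi)$, which yields the critical-point characterization. Uniqueness then follows because any surface of the stated form can, after adjustment of $\xi$, be written as $\Sigma_{\xi,\lambda}(u)$ with $u\in\Lambda_1^\perp$, and the contraction-mapping bound forces $u=u_{\xi,\lambda}$. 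The main obstacle is precisely this variational identification of the $\Lambda_1$-obstruction with $\bar DG_\lambda$: one has to track the way $u_{\xi,\lambda}$ varies with $\xi$ and how the base family $S_{\xi,\lambda}$ itself moves, then verify that the implicit derivatives combine, after a careful cancellation between the motion of the domain and the motion of the graph, into exactly the Euclidean gradient of $G_\lambda$ rather than some distorted version of it.
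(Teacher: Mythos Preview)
The paper does not prove this proposition at all; it simply records that it ``follows from [Proposition 17, Lemma 20, Lemma 21]'' of the companion paper \cite{acws}. Your outline is the standard Lyapunov--Schmidt reduction and matches the approach carried out there: solve the $\Lambda_1^\perp$-projected constrained Willmore equation by the implicit function theorem, read off the leading term of $u_{\xi,\lambda}$ from the Schwarzschild model, and use the variational structure to identify the residual $\Lambda_1$-component with $\bar D G_\lambda$.

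One genuine slip: the linearization of the area-constrained Willmore operator at a round sphere is \emph{not} $\Delta_{S^2}+2$. That is the Jacobi operator for constant mean curvature. The Willmore operator $W=-\Delta H-(|\hcirc|^2+\operatorname{Ric}(\nu,\nu))H$ is fourth order, and its linearization at a round sphere in Euclidean (or, to leading order, Schwarzschild) background is a rescaling of $\Delta_{S^2}(\Delta_{S^2}+2)$, with kernel $\Lambda_0\oplus\Lambda_1$. The area constraint together with the free parameter $\kappa$ removes the $\Lambda_0$ direction, leaving a kernel equal to $\Lambda_1$, so your invertibility claim on $\Lambda_1^\perp$ survives---but the operator you wrote down is wrong, and you should be explicit that you are inverting a fourth-order operator in $C^{4,\alpha}\to C^{0,\alpha}$ (which your function-space setup already reflects). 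Apart from this, your identification of the leading Legendre series via the radial-graph representation of the centered sphere $\{|x|=\lambda-2\}$ and your handling of the $\ell=0,1$ modes are correct.
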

The leading order term of the function $G_\lambda$ can be computed explicitly.
\begin{lem}[{\cite[Lemma 22]{acws}}] 
	There holds,  uniformly for every $\xi\in\mathbb{R}^3$ with $|\xi|<1-\delta $ as $\lambda\to\infty$, \label{G lemma}
	\begin{align} \label{G decomposition}
	G_\lambda(\xi)=G_1(\xi)+G_{2,\lambda}(\xi)+O(\lambda^{-1})
	\end{align}
	where
	\begin{align} \label{G1 definition}
	G_1(\xi)=64\,\pi+\frac{32\,\pi}{1-|\xi|^2}-48\,\pi\,|\xi|^{-1}\log\frac{1+|\xi|}{1-|\xi|}-128\,\pi\log(1-|\xi|^2)
	\end{align}
	and
	\begin{align} \label{G2 definition}
	G_{2,\lambda}(\xi)=2\,\lambda\,\int_{\mathbb{R}^3\setminus B_\lambda(\lambda\,\xi)}R\,\mathrm{d}\bar v.
	\end{align}
The expansion \eqref{G decomposition} may be differentiated twice with respect to $\xi$ and $\lambda$.
\end{lem}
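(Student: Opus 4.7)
The plan is to compute $F_\lambda(\Sigma_{\xi,\lambda})$ in three stages: reduce the perturbed surface to an unperturbed Euclidean sphere, evaluate the Schwarzschild contribution in closed form, and extract the leading $\sigma$-perturbation via a divergence identity. Since the Lemma is cited from \cite[Lemma 22]{acws}, my plan reconstructs the proof given there.

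First I would re-center as in Lemma \ref{tilde u estimate}. Writing $\Sigma_{\xi,\lambda} = \Sigma_{\tilde\xi,\tilde\lambda}(\tilde u)$ with $\tilde\lambda = \lambda - 2$, $\tilde\xi = \lambda\xi/\tilde\lambda$, and $\tilde u = u_{\xi,\lambda} + 2$ absorbs the leading $-2$ in \eqref{u estimate}, so that $\tilde u = O(|\xi|^2) + O(\lambda^{-1})$ and $\Sigma_{\xi,\lambda}$ becomes a small normal graph over the Euclidean sphere $S_{\tilde\xi,\tilde\lambda}$. The standard Willmore expansion under small normal graphs (exactly as in the proof of Lemma \ref{willmore expansion}) then yields
$$
\int_{\Sigma_{\xi,\lambda}} H^2 \, d\mu = \int_{S_{\tilde\xi,\tilde\lambda}} H^2 \, d\mu + O(\lambda^{-4}) + O(\lambda^{-2}|\xi|^4),
$$
where the linear correction is controlled by the $\Lambda_1^\perp$-constraint on $u_{\xi,\lambda}$ together with the prescribed area $4\pi\lambda^2$.

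Second I would evaluate the Schwarzschild piece $\int_{S_{\tilde\xi,\tilde\lambda}} H_S^2 \, d\mu_S$ in closed form. Parametrize $S_{\tilde\xi,\tilde\lambda}$ by $x = \lambda\xi + \tilde\lambda y$ with $y \in \mathbb{S}^2$, and expand $|x|^{-1}$ using the Legendre generating identity $(1 - 2st + t^2)^{-1/2} = \sum_\ell P_\ell(s)\,t^\ell$, which is the same expansion that produces the Legendre sum in \eqref{u estimate}. After substituting the conformal transformation rules for $H$ and $d\mu$ under $g_S = (1+|x|^{-1})^4 \bar g$, orthogonality of the $P_\ell$ on $\mathbb{S}^2$ collapses the series and yields the closed form $16\pi + 64\pi\lambda^{-1} + \lambda^{-2} G_1(\xi) + O(\lambda^{-3})$. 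Multiplying by $\lambda^2$ and subtracting the normalization $16\pi + 64\pi\lambda^{-1}$ gives precisely the contribution $G_1(\xi)$.

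Third, for the $\sigma$-piece, I would use the Lamm-Metzger-Schulze identity reproduced later in the paper as equation \eqref{w 0}, applied both to $g$ and to $g_S$ on $S_{\tilde\xi,\tilde\lambda}$. Subtracting the two versions and using $R_S = 0$ together with $\tilde Z|_{S_{\tilde\xi,\tilde\lambda}} = \nu_S$ eliminates every $g_S$-term, leaving the bulk integral $\tfrac{2}{3}\int_{\mathbb{R}^3 \setminus B_{\tilde\lambda}(\lambda\xi)} (\operatorname{div}\tilde Z)\, R \, dv$ plus Ricci-type boundary and bulk remainders. Since $\operatorname{div}\tilde Z = 3/\tilde\lambda + O(|x|^{-1})$, the bulk term contributes
$$
\frac{2}{\tilde\lambda} \int_{\mathbb{R}^3 \setminus B_{\tilde\lambda}(\lambda\xi)} R \, d\bar v + O(\lambda^{-3}) = \frac{1}{\lambda^2}\, G_{2,\lambda}(\xi) + O(\lambda^{-3}),
$$
the second equality using $R = O(|x|^{-4})$ to replace $B_{\tilde\lambda}$ by $B_\lambda$. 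The main obstacle is the bookkeeping of the remaining Ricci and graph-perturbation terms: several of them are individually of size $O(\lambda^{-2})$ but cancel in pairs after integration by parts using the decay $\partial_J\sigma = O(|x|^{-2-|J|})$, precisely the same type of computation---but simpler---than the one carried out explicitly in the proof of Lemma \ref{willmore linearization}. Finally, the differentiability of the expansion twice in $\xi$ and $\lambda$ follows because $u_{\xi,\lambda}$ depends smoothly on $(\xi,\lambda)$ through the implicit function theorem of Proposition \ref{LS prop}, and each of the error terms above differentiates to the expected order.
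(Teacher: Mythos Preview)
The paper does not prove this lemma; it is simply cited from \cite[Lemma~22]{acws}, so there is no in-paper argument to compare against. Your three-stage outline (graph reduction, explicit Schwarzschild computation, divergence identity for the $\sigma$-piece) is the correct architecture and matches what is done in \cite{acws}. However, the bookkeeping in your first step has a genuine gap.

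You claim that passing from $\Sigma_{\xi,\lambda}$ to the round sphere $S_{\tilde\xi,\tilde\lambda}$ costs only $O(\lambda^{-4})+O(\lambda^{-2}|\xi|^4)$. After multiplying by $\lambda^2$ this leaves an $O(|\xi|^4)$ discrepancy in $G_\lambda$, which is \emph{not} absorbed by the $O(\lambda^{-1})$ error of the lemma for general $|\xi|<1-\delta$. Concretely, the linear correction $-2\int_{S_{\tilde\xi,\tilde\lambda}} W\,\tilde u\,\mathrm{d}\mu$ pairs the explicit Legendre part of $\tilde u$ (namely $4\sum_{\ell\ge 2}\ell^{-1}|\xi|^\ell P_\ell$ from \eqref{u estimate}) against the explicit Legendre part of $W(S_{\tilde\xi,\tilde\lambda})$ from Lemma~\ref{willmore sphere}, producing a nonzero contribution of order $\lambda^{-2}|\xi|^4$ that is part of $G_1$, not an error term. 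In other words, the round-sphere Schwarzschild integral in your step~2 does \emph{not} by itself yield $G_1(\xi)$; it must be combined with the explicit graph correction. You need to compute both Legendre sums and verify that together they produce the closed form \eqref{G1 definition}. (There is also a sign slip: $\int_{S_{\tilde\xi,\tilde\lambda}}H_S^2\,\mathrm{d}\mu_S=16\pi-64\pi\lambda^{-1}+\cdots$, not $+64\pi\lambda^{-1}$; compare Lemma~\ref{schwarzschild contribution}.)

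A secondary point: the Section~\ref{com section} lemmas you invoke for analogy (Lemmas~\ref{tilde u estimate}, \ref{willmore expansion}, \ref{willmore linearization}) are stated only for small $|\xi|<\delta$, because they track finer error terms needed for the center-of-mass analysis. For the present lemma you need the full range $|\xi|<1-\delta$, so you should work directly with the coarser tools in Appendix~\ref{LS appendix} (Proposition~\ref{LS prop}, Lemma~\ref{willmore sphere}, Lemma~\ref{willmore expansion lemma}, and the identity \eqref{w 0}) rather than citing the refined Section~\ref{com section} versions.
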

We record some properties of the function $G_\lambda$.
\begin{lem} \label{g1 properties}
	The function $G_1$ is strictly convex and strictly increasing in radial directions. Moreover,  $G_1(0)=0$. 
\end{lem}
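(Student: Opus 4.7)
The approach is to exploit the rotational symmetry of $G_1$: since $G_1(\xi)$ depends only on $|\xi|$, we may write $G_1(\xi) = h(|\xi|^2)$ for a smooth function $h:[0,1)\to\mathbb{R}$, and reduce all three assertions to a single power series computation for $h$.

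The plan is to expand $h$ in powers of $s = |\xi|^2$ using the standard Taylor expansions
$$
(1-s)^{-1}=\sum_{k=0}^\infty s^k,\qquad |\xi|^{-1}\log\frac{1+|\xi|}{1-|\xi|}=2\sum_{k=0}^\infty \frac{s^k}{2k+1},\qquad \log(1-s)=-\sum_{k=1}^\infty \frac{s^k}{k},
$$
valid on $[0,1)$. Substituting these into \eqref{G1 definition}, the constant term becomes $64\pi+32\pi-96\pi = 0$, which immediately gives $G_1(0)=0$. For $k\geq 1$, the coefficient of $s^k$ is
$$
32\pi-\frac{96\pi}{2k+1}+\frac{128\pi}{k}=\frac{64\pi\,(k+1)(k+2)}{k(2k+1)}>0,
$$
so one obtains the clean expression
$$
h(s)=64\pi\sum_{k=1}^\infty \frac{(k+1)(k+2)}{k(2k+1)}\,s^k.
$$

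From this representation both remaining statements follow at once. Since every coefficient of $h$ is positive, $h'(s)>0$ and $h''(s)>0$ for all $s\in[0,1)$. The radial derivative along a ray $r\mapsto G_1(r\,\omega)$ is $2\,r\,h'(r^2)$, which is strictly positive for $r\in(0,1)$; this is the statement that $G_1$ is strictly increasing in radial directions. For strict convexity, a direct computation gives the Hessian
$$
\partial_i\partial_j G_1(\xi)=2\,\delta_{ij}\,h'(|\xi|^2)+4\,\xi^i\,\xi^j\,h''(|\xi|^2),
$$
which is a positive multiple of the identity plus a positive semi-definite rank-one correction, hence positive definite on $\{|\xi|<1\}$.

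There is no real obstacle here; the argument is essentially a bookkeeping exercise, and the only mildly subtle point is verifying that the constant and linear coefficients in the series for $|\xi|^{-1}\log[(1+|\xi|)/(1-|\xi|)]$ are handled consistently so that the closed-form coefficient $64\pi(k+1)(k+2)/[k(2k+1)]$ comes out manifestly positive for every $k\geq 1$.
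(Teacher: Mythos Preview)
Your argument is correct. In the paper this lemma is simply stated without proof, presumably because it reduces to a direct calculus check of \eqref{G1 definition}; your power-series reduction supplies exactly such a check in a clean way. The coefficient computation
\[
32\pi-\frac{96\pi}{2k+1}+\frac{128\pi}{k}=\frac{64\pi(k+1)(k+2)}{k(2k+1)}
\]
is right, and since $h'(0)=128\pi>0$ and $h''(0)=\tfrac{768\pi}{5}>0$ the Hessian $2\,h'(|\xi|^2)\,I+4\,h''(|\xi|^2)\,\xi\otimes\xi$ is positive definite on the whole ball, including the origin. As a bonus, your formula immediately gives $h'(s)\geq h'(0)=128\pi$, hence $|\xi|^{-1}\sum_i\xi^i\,\partial_iG_1(\xi)=2\,|\xi|\,h'(|\xi|^2)\geq 256\pi\,|\xi|$, which is precisely the quantitative radial monotonicity used in the proof of Lemma~\ref{xi estimate}.
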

\begin{lem}[{\cite[Lemma 24]{acws}}] \label{g der lemma}
Suppose that, as $x\to\infty$, 
\begin{align*}
\sum_{i=1}^3x^i\,\partial_i(|x|^2\,R)\leq o(|x|^{-2})\qquad\text{and}\qquad
R(x)-R(-x)=o(|x|^{-4}).
\end{align*}
 Then,  uniformly for every $\xi\in\mathbb{R}^3$ with $|\xi|<1-\delta $ as $\lambda\to\infty$,	$$|\xi|^{-1}\,\sum_{i=1}^3\xi^i\,(\partial_iG_{2,\lambda})({\xi})\geq -o(1).$$
If the stronger decay assumptions
\begin{align*}
\sum_{i=1}^3x^i\,\partial_i(|x|^2\,R)\leq O(|x|^{-3})\qquad\text{and}\qquad
R(x)-R(-x)=O(|x|^{-5})
\end{align*}
hold, then,  uniformly for every $\xi\in\mathbb{R}^3$ with $|\xi|<1-\delta $ as $\lambda\to\infty$, 
$$|\xi|^{-1}\,\sum_{i=1}^3\xi^i\,(\partial_iG_{2,\lambda})({\xi})\geq -O(\lambda^{-1}).$$

\end{lem}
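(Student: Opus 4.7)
The plan is to rewrite the radial derivative of $G_{2,\lambda}$ as a surface integral, split the integrand according to the parity of $R$ about the origin, and control each piece using the hypotheses via the divergence theorem. Differentiating the definition $G_{2,\lambda}(\xi)=2\lambda\int_{\mathbb{R}^3\setminus B_{\lambda}(\lambda\xi)} R\,\mathrm{d}\bar v$ and noting that translating the excluded ball in the direction $e=\xi/|\xi|$ exposes the boundary $S_\lambda(\lambda\xi)$ yields
$$|\xi|^{-1}\sum_{i=1}^3\xi^i\,(\partial_i G_{2,\lambda})(\xi) = -2\lambda^2\int_{S_\lambda(\lambda\xi)} R\,\bar g(e,\bar\nu)\,\mathrm{d}\bar\mu,$$
so that the lemma is equivalent to the upper bound $\int_{S_\lambda(\lambda\xi)} R\,\bar g(e,\bar\nu)\,\mathrm{d}\bar\mu\leq o(\lambda^{-2})$ in the weak case (respectively $\leq O(\lambda^{-3})$ in the strong case).

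I would then decompose $R=R_s+R_a$ into its parts symmetric and antisymmetric under $x\mapsto -x$. The assumption $R(x)-R(-x)=o(|x|^{-4})$ (resp.\ $O(|x|^{-5})$) gives $|R_a|=o(|x|^{-4})$ (resp.\ $O(|x|^{-5})$) pointwise, and since $|x|\geq \delta\lambda$ on $S_\lambda(\lambda\xi)$, the crude estimate $|\int_{S_\lambda(\lambda\xi)} R_a\,\bar g(e,\bar\nu)\,\mathrm{d}\bar\mu|\leq 4\pi\lambda^2\cdot o(\lambda^{-4})=o(\lambda^{-2})$ disposes of the antisymmetric piece, with the strong case being identical modulo replacing $o$ by $O$.

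For the symmetric part, averaging the first hypothesis at $x$ and $-x$ yields $2R_s + x\cdot\nabla R_s\leq o(|x|^{-4})$ (resp.\ $O(|x|^{-5})$). The key pointwise identity
$$\bar{\operatorname{div}}\!\bigg(R_s\,\frac{x}{|x|}\bigg) = \frac{2R_s+x\cdot\nabla R_s}{|x|}$$
combined with the divergence theorem on $\mathbb{R}^3\setminus B_\lambda(\lambda\xi)$ (the flux at infinity vanishes since $R_s|x|^{-1}x=O(|x|^{-4})$ makes the boundary contribution $O(r^{-2})$) produces $\int_{S_\lambda(\lambda\xi)} R_s|x|^{-1}\bar g(x,\bar\nu)\,\mathrm{d}\bar\mu\geq -o(\lambda^{-2})$. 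To convert this into the desired bound, I would use the identity $\bar g(x,\bar\nu)=\lambda(1+|\xi|\bar g(e,\bar\nu))$ valid on $S_\lambda(\lambda\xi)$ together with control of the residual scalar flux $\int_{S_\lambda(\lambda\xi)} R_s|x|^{-1}\,\mathrm{d}\bar\mu$ via the symmetry $R_s(-x)=R_s(x)$, which equates this integral on $S_\lambda(\lambda\xi)$ with its counterpart on $S_\lambda(-\lambda\xi)$ and forces cancellation at leading order.

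The main obstacle will be this final bridging step: the flux factor $|x|^{-1}\bar g(x,\bar\nu)$ arising naturally from the divergence identity agrees with the target $\bar g(e,\bar\nu)$ only at two points on $S_\lambda(\lambda\xi)$, so the gap must be absorbed by iterating the divergence theorem and carefully leveraging the parity of $R_s$ rather than of $R$ itself. The same line of argument, with $o(\cdot)$ replaced by $O(\cdot)$ throughout, yields the refined conclusion $\geq -O(\lambda^{-1})$ under the stronger decay assumptions of the second part of the lemma.
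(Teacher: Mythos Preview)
Your reduction to the surface integral and your treatment of the antisymmetric part $R_a$ are correct, and you correctly identify that the growth hypothesis passes to $R_s$. The gap is exactly where you flag it, and your proposed resolution does not work.

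Your divergence theorem argument with the field $R_s\,x/|x|$ yields the \emph{lower} bound
\[
\int_{S_\lambda(\lambda\xi)} R_s\,|x|^{-1}\,\bar g(x,\bar\nu)\,\mathrm{d}\bar\mu\ \geq\ -o(\lambda^{-2}),
\]
whereas you need an \emph{upper} bound on $\int_{S_\lambda(\lambda\xi)} R_s\,\bar g(e,\bar\nu)\,\mathrm{d}\bar\mu$. The two weights are not comparable in any monotone way: $|x|^{-1}\bar g(x,\bar\nu)=(1+|\xi|\cos\theta)/\sqrt{1+2|\xi|\cos\theta+|\xi|^2}$ is strictly positive on the whole sphere, while $\bar g(e,\bar\nu)=\cos\theta$ changes sign. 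Your proposed fix via the symmetry $R_s(-x)=R_s(x)$ for the ``residual scalar flux'' also fails: that parity only says $\int_{S_\lambda(\lambda\xi)} R_s|x|^{-1}\,\mathrm{d}\bar\mu=\int_{S_\lambda(-\lambda\xi)} R_s|x|^{-1}\,\mathrm{d}\bar\mu$, a tautological equality with no cancellation. Indeed, for $R_s=c\,|x|^{-4}$ with $c>0$ (which satisfies all hypotheses) this flux is of exact order $\lambda^{-3}$, not $o(\lambda^{-3})$, so it cannot be absorbed.

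The argument in the cited reference instead compares the two spheres $S_\lambda(0)$ and $S_\lambda(\lambda\xi)$ directly along radial half-lines through the origin, exactly as in Lemma~\ref{monotone gradient lemma} and Corollary~\ref{monotone gradient} of the present paper. The one-sided growth condition says that $|x|^2R$ is essentially non-increasing along each ray; combined with $R\geq -o(|x|^{-4})$ (which follows from the growth hypothesis itself by integrating along rays to infinity), this yields
\[
\int_{S_\lambda(\lambda\xi)} R\,\bar g(e,\bar\nu)\,\mathrm{d}\bar\mu\ \leq\ \int_{S_\lambda(0)} R\,\bar g(e,\bar\nu)\,\mathrm{d}\bar\mu\ +\ o(\lambda^{-2}).
\]
On the centered sphere the weight $\bar g(e,\bar\nu)$ is odd under $x\mapsto -x$, so the parity hypothesis on $R$ gives $\int_{S_\lambda(0)} R\,\bar g(e,\bar\nu)\,\mathrm{d}\bar\mu=o(\lambda^{-2})$, and the result follows. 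The structural point you are missing is that a one-sided radial growth condition is adapted to comparing spheres along rays from the origin, not to a single global flux identity.
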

\begin{proof}
	This follows exactly as in \cite[Lemma 24]{acws}.
\end{proof}
The following proposition is a consequence of Lemma \ref{G lemma}, Lemma \ref{g1 properties}, and Lemma \ref{g der lemma}.
\begin{prop}[{\cite[Theorems 5 and 8]{acws}}] Suppose that, as $x\to\infty$, 
	\begin{align*}
\sum_{i=1}^3	x^i\,\partial_i(|x|^2\,R)\leq o(|x|^{-2})\qquad\text{and}\qquad 
	R(x)-R(-x)=o(|x|^{-4}).
	\end{align*}  There is $\lambda_0>1$ such that for every $\lambda>\lambda_0$ the function $G_\lambda$ has a unique critical point $\xi(\lambda)$. Moreover, \label{existence prop} as $\lambda\to\infty$,
	\begin{align*} 
	\xi(\lambda)=o(1).
	\end{align*}  
\end{prop}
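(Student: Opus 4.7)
\emph{Strategy.} The plan is to apply the decomposition $G_\lambda = G_1 + G_{2,\lambda} + O(\lambda^{-1})$ from Lemma \ref{G lemma} and to use the explicit form of $G_1$ together with Lemma \ref{g der lemma} to show that every critical point of $G_\lambda$ lies in a ball of radius $o(1)$ around the origin. Existence of a critical point and the asymptotic bound then follow from minimization on a closed ball, while uniqueness follows from strict convexity of $G_\lambda$, using Corollary \ref{monotone gradient} applied to $R$.

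\emph{Radial bound and existence.} A direct computation from \eqref{G1 definition}, already employed in the proof of Lemma \ref{xi estimate}, shows that $|\xi|^{-1} \sum_{i=1}^{3} \xi^i (\partial_i G_1)(\xi) \geq 256\,\pi\, |\xi|$ on $\{|\xi| < 1-\delta\}$. Combining this with Lemma \ref{g der lemma} and the differentiable remainder of Lemma \ref{G lemma}, I would obtain
\[
|\xi|^{-1} \sum_{i=1}^{3} \xi^i (\partial_i G_\lambda)(\xi) \geq 256\,\pi\, |\xi| - o(1)
\]
uniformly on $\{|\xi| < 1-\delta\}$ as $\lambda \to \infty$. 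Hence there exists $\epsilon(\lambda) \to 0$ such that the radial derivative of $G_\lambda$ is strictly positive whenever $\epsilon(\lambda) \leq |\xi| < 1-\delta$, which forces every critical point of $G_\lambda$ to satisfy $|\xi| < \epsilon(\lambda)$. The function $G_\lambda$ extends continuously to the compact set $\{|\xi| \leq 1-\delta\}$ and attains a minimum at some $\xi_*$; if $|\xi_*| \geq \epsilon(\lambda)$, the positive radial derivative at $\xi_*$ would let me decrease $G_\lambda$ by moving to $t\,\xi_*$ with $t < 1$, contradicting minimality. Hence $\xi_*$ is interior and is a critical point with $|\xi_*| = o(1)$, producing $\xi(\lambda)$ together with the claimed asymptotics.

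\emph{Uniqueness and main obstacle.} The hypothesis $\sum_{i=1}^{3} x^i\, \partial_i(|x|^2\, R) \leq o(|x|^{-2})$, combined with the fact that $|x|^2\, R \to 0$ as $|x| \to \infty$ (which follows from $R = O(|x|^{-4})$ since $(M,g)$ is $C^4$-asymptotic to Schwarzschild), implies $R \geq -o(|x|^{-4})$ by radial integration of $\partial_r(|x|^2\, R) \leq o(|x|^{-3})$. Hence Corollary \ref{monotone gradient} applies to $R$ and yields
\[
[\bar D G_{2,\lambda}(\xi_1) - \bar D G_{2,\lambda}(\xi_2)] \cdot (\xi_1 - \xi_2) \geq -o(1)
\]
uniformly as $\lambda \to \infty$. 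Combined with the strict convexity of $G_1$ from Lemma \ref{g1 properties} and the $O(\lambda^{-1})$ remainder, this forces $G_\lambda$ to be strictly convex on $\{|\xi| < 1-\delta\}$ for all sufficiently large $\lambda$, from which uniqueness of the critical point is immediate. The main obstacle is precisely this last step: promoting the $o(1)$ monotonicity defect of $G_{2,\lambda}$ into strict convexity of $G_\lambda$ requires that the uniform $-o(1)$ error be absorbed by the quadratic lower bound $c\,|\xi_1 - \xi_2|^2$ coming from the Hessian of $G_1$ at the origin, despite the fact that critical points are only known to sit in a ball of radius $o(1)$. This amounts to a quantitative refinement of Corollary \ref{monotone gradient} in which the error scales with $|\xi_1 - \xi_2|$, carried out in the proof of \cite[Theorem 8]{acws}.
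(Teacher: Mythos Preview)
Your argument for existence and the bound $\xi(\lambda)=o(1)$ is correct and is exactly what the paper indicates: the radial derivative estimate from Lemma~\ref{g der lemma}, combined with the strict radial increase of $G_1$ and the $C^1$-controlled remainder from Lemma~\ref{G lemma}, forces any critical point into a shrinking ball about the origin, and minimizing $G_\lambda$ on a closed ball yields an interior critical point. The paper gives no further detail here beyond citing \cite[Theorems~5 and~8]{acws} and listing those same three lemmas.

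For uniqueness you route through Corollary~\ref{monotone gradient}, which is the mechanism of Theorem~\ref{general existence thm} rather than the one the paper names for this proposition; this is a legitimate alternative and in fact uses only the growth hypothesis, not the antisymmetry one. However, the ``obstacle'' you flag is not a genuine gap. Tracing the proof of Corollary~\ref{monotone gradient} with $f_\epsilon=R+\epsilon\,|x|^{-4}$, the error one picks up is $\epsilon$ times the difference of the two integrals $\int_{S_{\xi_j,\lambda}}|x|^{-4}\,\bar g(\bar\nu,\xi_2-\xi_1)\,\mathrm{d}\bar\mu$. Parametrizing $S_{\xi,\lambda}$ by $x=\lambda\,\xi+\lambda\,\omega$ with $\omega\in S^2$ and differentiating once in $\xi$, each such integral is a smooth function of $\xi$ with derivative of size $O(\lambda^{-2}\,|\xi_2-\xi_1|)$, so the difference is $O(\epsilon\,\lambda^{-2}\,|\xi_1-\xi_2|^2)$ uniformly on $\{|\xi|<1-\delta\}$. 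After multiplying by $2\,\lambda^2$ this gives
\[
[\bar D G_{2,\lambda}(\xi_1)-\bar D G_{2,\lambda}(\xi_2)]\cdot(\xi_1-\xi_2)\ \geq\ -o(1)\,|\xi_1-\xi_2|^2,
\]
which is absorbed by the uniform lower bound $\bar D^2 G_1\geq c_\delta\,I$ from Lemma~\ref{g1 properties} together with the $C^2$-remainder $O(\lambda^{-1})$ from Lemma~\ref{G lemma}. Strict convexity of $G_\lambda$ for large $\lambda$, and hence uniqueness, follows without further appeal to \cite{acws}.
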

\begin{rema} \label{kappa remark}
	It follows from \eqref{kappa estimate} that the function $$\lambda\mapsto \kappa(\Sigma(\lambda))
	$$
	is decreasing on $(\lambda_0,\infty)$, provided $\lambda_0>1$ is sufficiently large.
\end{rema}
\section{Some geometric expansions}
We collect several geometric computations that are needed in this paper. Throughout this section, we assume that $(M,g)$ is $C^4$-asymptotic to Schwarzschild with mass $m=2$.  We use a bar to indicate that a geometric quantity has been computed with respect to the Euclidean background metric $\bar g$. Likewise, we use the subscript $S$ to indicate that the Schwarzschild metric
$$
g_S=(1+|x|^{-1})^4\,\bar g
$$
with mass $m=2$ has been used in the computation.
\\ \indent Let $\delta\in(0,1/2)$, $\xi\in\mathbb{R}^3$ with $|\xi|<1-\delta$, and $\lambda>\lambda_0$, where $\lambda_0$ is the constant from Proposition \ref{LS prop}. Recall that $S_{\xi,\lambda}=S_\lambda(\lambda\,\xi)$.
\\ \indent The estimates below depend on $\delta\in(0,1/2)$ and $\lambda_0>1$ but are otherwise independent of $\lambda$ and $\xi$. 
\begin{lem}
	There holds
	\begin{align}
	\operatorname{Ric}_S(e_i,e_j)=2\,|x|^{-3}\,(\delta_{ij}-3\,x^i\,x^j\,|x|^{-2})+O(|x|^{-4})\label{rics}
	\end{align}
	and, as $x\to\infty$,
	 \label{Ricci expansion}
	\begin{equation} \label{ricci expansion equation}
	\begin{aligned}
	&(\operatorname{Ric}-{\operatorname{Ric}_S})(e_i,e_j)\\&\qquad=\,\frac12 \,\sum_{k=1}^3\left[(\bar D^2_{e_k,e_i}\sigma)(e_k,e_j)+(\bar D^2_{e_k,e_j}\sigma)(e_k,e_i) -(\bar D^2_{e_k,e_k} \sigma)(e_i,e_j)-(\bar D^2_{e_i,e_j}\sigma)(e_k,e_k)\right]\\&\qquad\qquad +O(|x|^{-5}).
	\end{aligned}	\end{equation}
\end{lem}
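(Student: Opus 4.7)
The plan is to reduce both assertions to a single expansion of the Ricci tensor linearized around the Euclidean background $\bar g$. I would write $g = \bar g + h$ with $h = h_S + \sigma$, where $h_S := g_S - \bar g = \phi\,\bar g$ and $\phi := (1+|x|^{-1})^4 - 1 = 4\,|x|^{-1} + O(|x|^{-2})$. The hypothesis that $(M,g)$ is $C^4$-asymptotic to Schwarzschild gives $\partial_J h_S = O(|x|^{-1-|J|})$ and $\partial_J \sigma = O(|x|^{-2-|J|})$ for $|J|\leq 4$.

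The starting point is the classical expansion $\operatorname{Ric}(\bar g + k) = L(k) + Q(k)$, valid for any symmetric $2$-tensor $k$ that decays at infinity, where
\begin{equation*}
L(k)_{ij} := \tfrac{1}{2}\sum_\ell\bigl[(\bar D^2_{e_\ell,e_i}k)(e_\ell,e_j) + (\bar D^2_{e_\ell,e_j}k)(e_\ell,e_i) - (\bar D^2_{e_\ell,e_\ell}k)(e_i,e_j) - (\bar D^2_{e_i,e_j}k)(e_\ell,e_\ell)\bigr]
\end{equation*}
is the linearization of Ricci around $\bar g$, and $Q(k)$ is a pointwise bilinear combination of products $k\cdot \bar D^2 k$ and $\bar D k\cdot \bar D k$ with coefficients bounded for $|k|$ small. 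In particular, $|Q(k)| \leq C(|k|\,|\bar D^2 k| + |\bar D k|^2)$.

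For the first assertion, I would specialize $L$ to $k = \phi\,\bar g$. Because $h_S$ is a conformal multiple of $\bar g$, the four terms of $L_{ij}(\phi\,\bar g)$ collapse to $-\tfrac{1}{2}\bar D_i\bar D_j\phi - \tfrac{1}{2}(\bar\Delta\phi)\,\delta_{ij}$. Since $\bar\Delta|x|^{-1} = 0$ in $\mathbb{R}^3\setminus\{0\}$ and $\bar D_i\bar D_j|x|^{-1} = -|x|^{-3}\delta_{ij} + 3|x|^{-5}x_i x_j$, the leading contribution $\phi_1 = 4|x|^{-1}$ produces precisely $2|x|^{-3}(\delta_{ij} - 3 x_i x_j |x|^{-2})$. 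The residual $\phi - \phi_1 = O(|x|^{-2})$ contributes $O(|x|^{-4})$ after two derivatives, and $Q(h_S) = O(|x|^{-4})$ directly from the schematic bound, so the first formula follows.

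For the second assertion, bilinearity of $L$ and polarization of $Q$ give
\begin{equation*}
\operatorname{Ric}(g) - \operatorname{Ric}(g_S) = L(\sigma) + 2B(h_S,\sigma) + Q(\sigma),
\end{equation*}
where $B$ is the symmetric bilinear form polarizing $Q$. The term $L(\sigma)$ is exactly the right-hand side of the claimed formula. Using the decay rates above, $|B(h_S,\sigma)| \leq C(|h_S|\,|\bar D^2\sigma| + |\bar D h_S|\,|\bar D\sigma| + |\bar D^2 h_S|\,|\sigma|) = O(|x|^{-5})$ and $|Q(\sigma)| \leq C(|\sigma|\,|\bar D^2\sigma| + |\bar D\sigma|^2) = O(|x|^{-6})$, both of which are absorbed into the stated $O(|x|^{-5})$ remainder. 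The main obstacle here is less conceptual than book-keeping: organising the schematic bounds for $Q$ and $B$ so that the three distinct distributions of derivatives appear explicitly. The formula for $L$ is the standard Ricci linearization around a flat background and may simply be quoted.
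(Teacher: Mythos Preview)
Your argument is correct. The only imprecision is calling $Q$ the ``quadratic'' remainder and invoking polarization: since the inverse metric enters the Christoffel symbols, the remainder $Q(k)=\operatorname{Ric}(\bar g+k)-L(k)$ is genuinely nonlinear, not purely quadratic. What you actually need is that $Q(h_S+\sigma)-Q(h_S)$ is a sum of terms each containing at least one factor of $\sigma,\bar D\sigma,$ or $\bar D^2\sigma$ and at most two derivatives in total; your schematic bounds then give $O(|x|^{-5})$ just as you wrote. This is a cosmetic fix.

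Your route differs from the paper's in the choice of base point for the linearization. The paper sets $g_t=g_S+t\sigma$ and linearizes the coordinate expression for $\operatorname{Ric}$ at $t=0$, i.e.\ around $g_S$; the formula \eqref{ricci expansion equation} then emerges because the $g_S$-linearization agrees with the Euclidean linearization $L(\sigma)$ up to terms of the form $h_S\ast\bar D^2\sigma+\bar Dh_S\ast\bar D\sigma+\bar D^2 h_S\ast\sigma=O(|x|^{-5})$, and the first identity \eqref{rics} is treated separately as a direct computation. You instead linearize once around $\bar g$ and handle both assertions with the same operator $L$: the first identity becomes the explicit evaluation of $L(\phi\,\bar g)$, and the cross terms that the paper hides in ``linearize around $g_S$'' appear for you explicitly as $Q(h_S+\sigma)-Q(h_S)$. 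Your version is a bit more uniform and makes the error structure visible; the paper's version is shorter because it outsources the first identity to a one-line computation and avoids naming the bilinear piece.
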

\begin{proof}
	\eqref{rics} follows from a direct computation. To prove \eqref{ricci expansion equation},  we define the family of metrics $
	g_t=g_S+t\,\sigma,
	$
	where $t\in[0,1]$. Note that $g_0=g_S$ and $g_1=g$. The estimate follows upon linearization of the expression $$\operatorname{Ric}(e_i,e_j)=\sum_{k=1}^3\bigg[\partial_k \Gamma^k_{ij}-\partial_i\Gamma^k_{kj}+\sum_{\ell=1}^3\big[\Gamma^k_{k\ell}\,\Gamma^\ell_{ij}-\Gamma^{k}_{il}\,\Gamma^\ell_{kj}\big]\bigg]$$
	where $\Gamma^{k}_{ij}$ are the Christoffel symbols of $g$. 
\end{proof}

\begin{lem}[{\cite[Lemma 41]{acws}}] There holds
	$$
	\nu_S(S_{\xi,\lambda})=(1+|x|^{-1})^{-2}\,\bar\nu
	$$
	and, uniformly for every $\xi\in\mathbb{R}^3$ with $|\xi|<1-\delta$  as $\lambda\to\infty$,
	\begin{align} \label{nu vs nus}
	\nu-\nu_S=\,\frac12\,\sigma(\bar\nu,\bar\nu)\,\bar\nu-\sum_{i=1}^3\sigma(\bar\nu,e_i)\,e_i+O(\lambda^{-3}).
	\end{align}
\end{lem}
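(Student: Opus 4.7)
My plan is to prove both identities by elementary linear algebra, starting from the ansatz $\nu=a\,\bar\nu+V$ with $V\in\mathbb{R}^3$ satisfying $\bar g(V,\bar\nu)=0$, i.e.\ $V$ is $\bar g$-tangent to $S_{\xi,\lambda}$. The coefficient $a>0$ and the vector $V$ are then determined by the $g$-tangency condition $g(\nu,X)=0$ for $X\in TS_{\xi,\lambda}$ and the $g$-normalization $g(\nu,\nu)=1$. The Schwarzschild formula is just the special case $\sigma=0$: since $g_S=(1+|x|^{-1})^4\,\bar g$ is pointwise conformal to $\bar g$, the Euclidean normal $\bar\nu$ remains $g_S$-orthogonal to $TS_{\xi,\lambda}$, and the identity $|\bar\nu|_{g_S}=(1+|x|^{-1})^2$ forces $\nu_S=(1+|x|^{-1})^{-2}\bar\nu$.

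For the full metric $g=g_S+\sigma$, the tangency equation becomes, using $\bar g(\bar\nu,X)=0$,
$$(1+|x|^{-1})^4\,\bar g(V,X)+a\,\sigma(\bar\nu,X)+\sigma(V,X)=0 \qquad \text{for all } X\in TS_{\xi,\lambda}.$$
Since $|\xi|<1-\delta$ forces $|x|\sim\lambda$ on $S_{\xi,\lambda}$, we have $\sigma=O(\lambda^{-2})$ and $(1+|x|^{-1})^{\pm 2}=1+O(\lambda^{-1})$ uniformly. A one-step bootstrap gives $V=O(\lambda^{-2})$, after which $\sigma(V,X)=O(\lambda^{-4})$ and $a=(1+|x|^{-1})^{-2}+O(\lambda^{-2})$, so that $\bar g(V,X)=-\sigma(\bar\nu,X)+O(\lambda^{-3})$. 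The normalization $g(\nu,\nu)=1$ absorbs the terms $|V|^2_{\bar g}$, $\sigma(V,V)$, $\sigma(\bar\nu,V)$ into $O(\lambda^{-4})$, reducing to
$$a^2\bigl[(1+|x|^{-1})^4+\sigma(\bar\nu,\bar\nu)\bigr]=1+O(\lambda^{-4}).$$
Taking a square root and expanding yields $a=(1+|x|^{-1})^{-2}-\tfrac12\,\sigma(\bar\nu,\bar\nu)+O(\lambda^{-3})$.

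To convert $V$ into the invariant sum displayed in \eqref{nu vs nus}, I will use that $\sum_{i=1}^3 \sigma(\bar\nu,e_i)\,e_i$ is the $\bar g$-sharp of the one-form $\sigma(\bar\nu,\cdot)$, hence independent of the choice of $\bar g$-orthonormal basis. Choosing an orthonormal tangent frame $\{e_1^T,e_2^T\}$ of $TS_{\xi,\lambda}$ completed by $\bar\nu$ and using the identity $\bar g(V,X)=-\sigma(\bar\nu,X)+O(\lambda^{-3})$, I obtain $V=-\sum_{i=1}^2 \sigma(\bar\nu,e_i^T)\,e_i^T+O(\lambda^{-3})=-\sum_{i=1}^3 \sigma(\bar\nu,e_i)\,e_i+\sigma(\bar\nu,\bar\nu)\,\bar\nu+O(\lambda^{-3})$. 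Combining with the expansion of $a$ and subtracting $\nu_S$ produces \eqref{nu vs nus}; the delicate point is only the bookkeeping of the $\sigma(\bar\nu,\bar\nu)\,\bar\nu$ coefficient, where the $-\tfrac12$ from the normalization and the $+1$ from the basis rewriting combine to the stated $+\tfrac12$.
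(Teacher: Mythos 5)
Your argument is correct: the conformal computation for $\nu_S$, the tangency/normalization system for $a$ and $V$ with the one-step bootstrap $V=O(\lambda^{-2})$, and the bookkeeping $-\tfrac12+1=+\tfrac12$ on the $\sigma(\bar\nu,\bar\nu)\,\bar\nu$ coefficient all check out, using only that $|x|\sim\lambda$ uniformly on $S_{\xi,\lambda}$ for $|\xi|<1-\delta$. This is essentially the same perturbative linear-algebra computation as in the cited source, so nothing further is needed.
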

Given $\xi\in\mathbb{R}^3$ and $\lambda>1$, we define the vector field
\begin{align} \label{Z definition}
Z_{\xi,\lambda}=(1+|x|^{-1})^{-2}\,\lambda^{-1}\,(x-\lambda\,\xi).
\end{align}
Note that $Z_{\xi,\lambda}=\nu_S(S_{\xi,\lambda})$ on $S_{\xi,\lambda}$.
\\ \indent For the statement of the next lemma, recall 
the definition of the conformal Killing operator $\mathcal{D}$  given by
\begin{align}
(\mathcal{D}Z)(X,Y)=g(D_XZ,Y)+g(D_YZ,X)-\frac23\,\operatorname{div}(Z)\,g(X,Y) \label{killing definition}
\end{align}
for vector fields $Z,\,X,\,Y$.
\begin{lem}
	There holds
\begin{equation} \label{schwarzschild killing}
		\begin{aligned} 
	(\mathcal{D}_S Z_{\xi,\lambda})(e_i,e_j)=\,&4\,\lambda^{-1}\,|x|^{-3}\,x^i\,x^j-2\,|x|^{-3}\,\left(x^i\,\xi^j+\xi^i\,x^j\right)+\frac43\,\left(|x|^{-3}\,\bar g(x,\xi)-\lambda^{-1}\,|x|^{-1}\right)\,\delta_{ij}\\&+O(|x|^{-3})+O(\lambda^{-1}\,|x|^{-2})
	\end{aligned}
\end{equation}
and,   uniformly for every $\xi\in\mathbb{R}^3$ with $|\xi|<1-\delta$ as $\lambda\to\infty$ on $\mathbb{R}^3\setminus B_{\lambda}(\lambda\,\xi)$, 
\begin{equation} \label{Killing changes}\begin{aligned}
(\mathcal{D}Z_{\xi,\lambda}-\mathcal{D}_SZ_{\xi,\lambda})(e_i,e_j)=\,&\sum_{k=1}^3\bigg[(\lambda^{-1}\,x^k-\xi^k)\,(\partial_k\sigma)(e_i,e_j)-\frac13\,(\lambda^{-1}\,x^k-\xi^k)\,(\partial_k\bar{\operatorname{tr}}\,\sigma)\,\delta_{ij}\bigg]\\&\qquad +O(\lambda^{-1}\,|x|^{-3})+O(|x|^{-4}).
\end{aligned}
\end{equation} 
\end{lem}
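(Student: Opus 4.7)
The plan is to prove both identities by direct computation: for \eqref{schwarzschild killing}, I would exploit the conformal structure of Schwarzschild, and for \eqref{Killing changes}, I would linearize the conformal Killing operator around $g_S$ in the direction of $\sigma$.

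For the Schwarzschild identity, set $\phi(x) = 1 + |x|^{-1}$ so that $g_S = \phi^4 \bar g$ and $\partial_i \phi = -|x|^{-3} x^i$. The conformal Christoffel formula gives
$$\Gamma_S{}^k_{ij} = 2\phi^{-1}\bigl(\delta^k_i\,\partial_j\phi + \delta^k_j\,\partial_i\phi - \delta_{ij}\,\partial_k\phi\bigr).$$
Inserting $Z_{\xi,\lambda}^k = \phi^{-2}\lambda^{-1}(x^k - \lambda\xi^k)$, I would compute $D^S_i Z_{\xi,\lambda}^k = \partial_i Z_{\xi,\lambda}^k + \Gamma_S{}^k_{il} Z_{\xi,\lambda}^l$ explicitly. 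The derivative of $Z$ contributes an identity-type term $\phi^{-2}\lambda^{-1}\delta^k_i$ together with a radial correction from differentiating $\phi^{-2}$. Expanding $\phi^{-2} = 1 - 2|x|^{-1} + O(|x|^{-2})$ and $g_S(e_i,e_j) = \phi^4\delta_{ij} = (1 + O(|x|^{-1}))\delta_{ij}$, then symmetrizing and subtracting $\tfrac{2}{3}\operatorname{div}_S(Z_{\xi,\lambda})\,g_S(e_i, e_j)$, the leading term $4\lambda^{-1}|x|^{-3}x^i x^j$ emerges from the radial derivative, the piece $-2|x|^{-3}(x^i\xi^j + \xi^i x^j)$ comes from separating off the $-\lambda\xi^k$ part of $Z_{\xi,\lambda}^k$, and the $\tfrac{4}{3}$ coefficient appears after taking the conformal trace. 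All remaining contributions fall into the stated error $O(|x|^{-3}) + O(\lambda^{-1}|x|^{-2})$.

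For the perturbation identity, write $g = g_S + \sigma$. The standard formula for the linearization of the Christoffel symbols yields
$$\Gamma^k_{ij} - \Gamma_S{}^k_{ij} = \tfrac{1}{2}\,g_S^{kl}\bigl(D^S_i \sigma_{jl} + D^S_j \sigma_{il} - D^S_l \sigma_{ij}\bigr) + O(|\sigma|^2).$$
Since $\sigma = O(|x|^{-2})$ with $\bar D\sigma = O(|x|^{-3})$, on $\mathbb{R}^3 \setminus B_\lambda(\lambda\xi)$ I may replace $D^S$ by $\bar D$ and $g_S^{kl}$ by $\delta^{kl}$ at the cost of errors of order $O(|x|^{-4})$. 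The difference $(\mathcal{D}Z_{\xi,\lambda} - \mathcal{D}_S Z_{\xi,\lambda})(e_i, e_j)$ then splits into three parts: a covariant-derivative piece involving $\sum_l (\Gamma^k_{il} - \Gamma_S{}^k_{il}) Z_{\xi,\lambda}^l$ plus its $i\leftrightarrow j$ symmetrization; a metric-difference piece arising from comparing $g$ to $g_S$ in the first two slots of $\mathcal{D}$; and a divergence trace piece $-\tfrac{2}{3}(\operatorname{div}Z_{\xi,\lambda}\cdot g(e_i, e_j) - \operatorname{div}_S Z_{\xi,\lambda}\cdot g_S(e_i, e_j))$. Substituting $Z_{\xi,\lambda}^k = \lambda^{-1}(x^k - \lambda\xi^k) + O(|x|^{-1})$ into the first two pieces collapses them to $\sum_k (\lambda^{-1}x^k - \xi^k)\,\partial_k\sigma(e_i, e_j)$, while the trace of the same Christoffel difference produces the $-\tfrac{1}{3}(\lambda^{-1}x^k - \xi^k)\,\partial_k\bar{\operatorname{tr}}\,\sigma\,\delta_{ij}$ contribution from the divergence piece.

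The main obstacle is keeping track of two competing error classes. The $O(\lambda^{-1}|x|^{-3})$ bound collects every cross-term between the $\phi^{-2} = 1 - 2|x|^{-1} + \ldots$ expansion of the conformal factor and the leading $\bar D\sigma = O(|x|^{-3})$, whereas $O(|x|^{-4})$ absorbs the quadratic-in-$\sigma$ contributions, the products of $\sigma$ with $\bar D\sigma$, and the replacements of $g_S^{kl}$ by $\delta^{kl}$ in the Christoffel difference. Verifying that every omitted term lies in one of these two classes amounts to a careful term-by-term check, in the same spirit as the proof of \cite[Lemma 41]{acws}.
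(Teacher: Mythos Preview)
Your approach is sound and would lead to the stated formulas, but for \eqref{schwarzschild killing} the paper takes a shorter route worth noting. Rather than compute $D^S_iZ_{\xi,\lambda}^k$ directly from the conformal Christoffel symbols, the paper expands
\[
Z_{\xi,\lambda}=\lambda^{-1}\,x-\xi-2\,\lambda^{-1}\,|x|^{-1}\,x+2\,|x|^{-1}\,\xi+O(\lambda^{-1}\,|x|^{-1})+O(|x|^{-2})
\]
and observes that the leading pieces $\lambda^{-1}x$ and $\xi$ are \emph{Euclidean conformal Killing fields}, hence conformal Killing for the conformally flat metric $g_S$ as well, so $\mathcal{D}_S$ annihilates them. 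One is then left with the two explicit correction vector fields $-2\lambda^{-1}|x|^{-1}x$ and $2|x|^{-1}\xi$, each of which is easy to feed into $\bar{\mathcal{D}}$ (the difference between $\mathcal{D}_S$ and $\bar{\mathcal{D}}$ being lower order). This decomposition makes the origin of each displayed term transparent and avoids the book-keeping your direct expansion requires. Your brute-force route via $\Gamma_S{}^k_{ij}=2\phi^{-1}(\delta^k_i\partial_j\phi+\delta^k_j\partial_i\phi-\delta_{ij}\partial_k\phi)$ is perfectly valid; it just trades a conceptual observation for more algebra.

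For \eqref{Killing changes} your linearization strategy matches the paper's, which phrases it as differentiating $\mathcal{D}_{g_t}Z_{\xi,\lambda}$ along $g_t=g_S+t\sigma$. One small point: your claim that ``the first two pieces collapse to $\sum_k(\lambda^{-1}x^k-\xi^k)\partial_k\sigma(e_i,e_j)$'' skips over a cancellation. The metric-difference piece actually produces an extra $2\lambda^{-1}\sigma_{ij}$ (from $\sigma_{jk}\partial_iZ^k+\sigma_{ik}\partial_jZ^k\approx 2\lambda^{-1}\sigma_{ij}$), which is only $O(\lambda^{-1}|x|^{-2})$ and does not fit in the stated error; it is precisely cancelled by the $-\tfrac{2}{3}\operatorname{div}_S Z_{\xi,\lambda}\cdot\sigma_{ij}\approx -2\lambda^{-1}\sigma_{ij}$ contribution from the divergence piece (since $\operatorname{div}_S Z_{\xi,\lambda}=3\lambda^{-1}+\text{lower order}$). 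You should flag this cancellation explicitly when carrying out the computation.
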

\begin{proof}
	To prove \eqref{schwarzschild killing}, we expand, as $\lambda\to\infty$ on $\mathbb{R}^3\setminus B_{\lambda}(\lambda\,\xi)$,
	$$
	Z_{\xi,\lambda}=\lambda^{-1}\,x-\xi-2\,\lambda^{-1}\,|x|^{-1}\,x+2\,|x|^{-1}\,\xi+O(\lambda^{-1}\,|x|^{-1})+O(|x|^{-2}).
	$$
	 	The first two terms on the right-hand side are conformal Killing vector fields. For the third term, we compute
	\begin{align*}
	(\mathcal{D}_S(\lambda^{-1}\,|x|^{-1}\,x))(e_i,e_j)=\,&\lambda^{-1}\,(\bar{\mathcal{D}}(|x|^{-1}\,x))(e_i,e_j)+O(\lambda^{-1}\,|x|^{-2})
	\\=\,&\frac23\,\lambda^{-1}\,|x|^{-1}\,\delta_{ij}-2\,\lambda^{-1}\,|x|^{-3}\,x^i\,x^j.
	\end{align*} Likewise, for the fourth term, we compute
	\begin{align*}
	(\mathcal{D}_S(|x|^{-1}\,\xi))(e_i,e_j)=\,&(\bar{\mathcal{D}}(|x|^{-1}\,\xi))(e_i,e_j)+O(|x|^{-3})\\=\,&-|x|^{-3}\,\big[\bar g(x,e_i)\,\bar g(\xi,e_j)+\bar g(\xi,e_i)\,\bar g(x,e_j)\big]+\frac23\,|x|^{-3}\,\bar g(x,\xi)\,\delta_{ij}+O(|x|^{-3}).
	\end{align*}\indent 
	To prove \eqref{Killing changes}, we again consider the family of metrics $
	g_t=g_S+t\,\sigma,
	$
	where $t\in[0,1]$, and linearize the expression
	$$
	(\mathcal{D}Z_{\xi,\lambda})(e_i,e_j)=g(D_{e_i}Z_{\xi,\lambda},e_j)+g(D_{e_j}Z_{\xi,\lambda},e_i)-\frac{2}{3}\,g(e_i,e_j)\,\operatorname{div}Z_{\xi,\lambda}.
	$$
\end{proof}

We recall the following expansion of the Willmore operator of a sphere $S_{\xi,\lambda}$.
\begin{lem}[{\cite[Corollary 45]{acws}}]
	There holds,  uniformly for every $\xi\in\mathbb{R}^3$ with $|\xi|<1-\delta $ as $\lambda\to\infty$, \label{willmore sphere}
	$$
	{W}({{S}_{\xi,\lambda}})=4\,\lambda^{-4}\sum_{\ell=0}^\infty (\ell-1)\,(\ell+1)\,(\ell+2)\,|\xi|^{\ell}\,P_l(-|\xi|^{-1}\,\bar g(\bar \nu,\xi))+{O}(\lambda ^{-5}).
	$$

\end{lem}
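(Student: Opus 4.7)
The plan is to reduce to the Schwarzschild background and then compute explicitly using the conformal structure together with the generating function for Legendre polynomials already introduced in the excerpt.

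\textbf{Step 1: Reduction to Schwarzschild.} The Willmore operator involves at most three derivatives of the ambient metric: one through the mean curvature $H$ and two more through $\Delta H$; the curvature and $|\hcirc|^2$ contributions bring in at most two derivatives. Since $(M,g)$ is $C^4$-asymptotic to Schwarzschild with $\partial_J\sigma=O(|x|^{-2-|J|})$ and $|x|\sim\lambda$ on $S_{\xi,\lambda}$, a routine comparison of Christoffel symbols and their derivatives gives
\begin{align*}
W(S_{\xi,\lambda})-W_S(S_{\xi,\lambda}) = O(\lambda^{-5})
\end{align*}
uniformly for $|\xi|<1-\delta$. It therefore suffices to compute $W_S(S_{\xi,\lambda})$.

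\textbf{Step 2: Conformal vanishing of $\hcirc_S$.} Since $g_S=u^4\,\bar g$ with $u=1+|x|^{-1}$ and the Euclidean second fundamental form of $S_{\xi,\lambda}$ is pure trace, the standard conformal change formulas yield $\hcirc_S=u^2\,\hbarcirc=0$ on $S_{\xi,\lambda}$. Hence
\begin{align*}
W_S(S_{\xi,\lambda}) = -\Delta_{S_{\xi,\lambda}}^{g_S}\,H_S - \operatorname{Ric}_S(\nu_S,\nu_S)\,H_S.
\end{align*}

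\textbf{Step 3: Legendre expansions of the three ingredients.} Parametrize $S_{\xi,\lambda}$ by $x=\lambda\,(\xi+y)$ with $|y|=1$, so that
\begin{align*}
|x|^{-1} = \lambda^{-1}\,\bigl(1+2\,\bar g(\xi,y)+|\xi|^2\bigr)^{-1/2} = \lambda^{-1}\sum_{\ell=0}^\infty P_\ell\bigl(-|\xi|^{-1}\,\bar g(\bar\nu,\xi)\bigr)\,|\xi|^\ell
\end{align*}
by the generating function recalled in the excerpt, with $\bar\nu=y$. Using the conformal transformation formulas
\begin{align*}
H_S = u^{-2}\bigl(\bar H + 4\,u^{-1}\,\bar\nu(u)\bigr), \qquad \operatorname{Ric}_S(\nu_S,\nu_S) \text{ from \eqref{rics}},
\end{align*}
together with the fact that the induced metric on $S_{\xi,\lambda}$ is $u^4$ times the round metric on the Euclidean sphere $S_\lambda$, I obtain expansions of $H_S$, $\operatorname{Ric}_S(\nu_S,\nu_S)$, and the coefficients of $\Delta_{S_{\xi,\lambda}}^{g_S}$ as power series in $|\xi|$ whose coefficients are Legendre polynomials in $s=-|\xi|^{-1}\bar g(\bar\nu,\xi)$. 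The Laplace--Beltrami operator on the round sphere annihilates constants and has the functions $y\mapsto P_\ell(s)\,|\xi|^\ell$ as approximate eigenfunctions with eigenvalues $-\ell(\ell+1)/\lambda^2$ up to $O(\lambda^{-1})$ conformal corrections.

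\textbf{Step 4: Assembling the coefficients.} Combining the three contributions mode by mode, one finds that the $\ell$--th Legendre mode of $W_S(S_{\xi,\lambda})$ has coefficient
\begin{align*}
4\,\lambda^{-4}\,\bigl[\ell(\ell+1) + \text{(Ricci contribution)} + \text{(mean-curvature variation)}\bigr],
\end{align*}
which telescopes to the polynomial $(\ell-1)(\ell+1)(\ell+2)$. The vanishing at $\ell=1$ is a consistency check: it encodes the fact that translations of $\mathbb{R}^3$ lie in the kernel of the linearized Willmore operator on a round sphere, so perturbations of $\xi$ do not move $W_S(S_{\xi,\lambda})$ at first order.

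\textbf{Main obstacle.} Steps 1 and 2 are essentially bookkeeping. The technical heart is step 3--4: one must track the Legendre series produced by each of $H_S$, $\operatorname{Ric}_S(\nu_S,\nu_S)\,H_S$, and $\Delta_{S_{\xi,\lambda}}^{g_S} H_S$, and verify that when assembled the coefficients collapse to $(\ell-1)(\ell+1)(\ell+2)$ uniformly in $\ell$. This relies on the standard three-term recurrence $(\ell+1)\,P_{\ell+1}(s)=(2\,\ell+1)\,s\,P_\ell(s)-\ell\,P_{\ell-1}(s)$ to reorganize cross terms such as $s\,P_\ell(s)$ and $(1-s^2)\,P_\ell'(s)$ that arise from $\bar\nu(u)$ and from the conformal corrections to $\Delta^{g_S}$. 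The remainder $O(\lambda^{-5})$ absorbs both the Schwarzschild terms of order $|x|^{-5}$ and the $\sigma$-perturbation from step 1.
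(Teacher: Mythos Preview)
The paper itself contains no proof of this lemma: it is quoted verbatim from \cite[Corollary~45]{acws} and stated without argument in the appendix. So there is no ``paper's own proof'' to compare against; I can only comment on the soundness of your outline.

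Your strategy is the natural one and almost certainly parallels the argument in \cite{acws}: reduce to exact Schwarzschild via the decay of $\sigma$, use that $\hcirc_S$ vanishes on Euclidean spheres by conformal invariance, and then expand $H_S$, $\operatorname{Ric}_S(\nu_S,\nu_S)$, and $\Delta^{g_S}H_S$ in Legendre modes. Steps~1 and~2 are correct as stated. One small sharpening: the functions $y\mapsto P_\ell(-\bar g(y,\xi/|\xi|))$ are \emph{exact} eigenfunctions of the Euclidean Laplacian on $S_{\xi,\lambda}$ with eigenvalue $-\ell(\ell+1)\lambda^{-2}$ (they are zonal spherical harmonics); the ``approximate'' only enters when you pass from $\Delta^{\bar g}$ to $\Delta^{g_S}$ via the conformal factor.

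The gap is that Steps~3--4 are not carried out. You write that the coefficients ``telescope to $(\ell-1)(\ell+1)(\ell+2)$'' and flag this as the main obstacle, but you do not perform the computation. This \emph{is} the content of the lemma: one has to expand $H_S$ to order $\lambda^{-2}$ (not just the leading $2\lambda^{-1}$), track how $\Delta^{g_S}$ differs from $\lambda^{-2}\Delta_{S^2}$ at the next order, compute $\operatorname{Ric}_S(\nu_S,\nu_S)\,H_S$ to order $\lambda^{-4}$, and then show that the three Legendre series combine to give the cubic $(\ell-1)(\ell+1)(\ell+2)$. The three-term recurrence you mention is indeed the right tool for reorganizing products like $s\,P_\ell(s)$, but until those cancellations are actually exhibited mode by mode, what you have is a plausible sketch rather than a proof. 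The consistency check at $\ell=1$ is a good sanity test but does not substitute for the full computation.
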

The following estimate is contained in the proof of \cite[Lemma 42]{acws}.
\begin{lem}
There holds,  uniformly for every $\xi\in\mathbb{R}^3$ with $|\xi|<1-\delta $ as $\lambda\to\infty$, \label{hcirc estimate}
	$$
	\int_{S_{\xi,\lambda}} |\hcirc|^2\,\mathrm{d}\mu=O(\lambda^{-4}).
	$$	This identity may be differentiated once with respect to $\xi$.
\end{lem}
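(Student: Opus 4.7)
The plan is to exploit the fact that Euclidean coordinate spheres are umbilic in the Schwarzschild background and to treat the full metric $g$ as a lower-order perturbation of $g_S$.

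First I would observe that $S_{\xi,\lambda}$ is umbilic with respect to $g_S$ and hence $\hcirc_S\equiv 0$ on $S_{\xi,\lambda}$. Indeed, $S_{\xi,\lambda}$ is umbilic with respect to the Euclidean background metric, and under the conformal change $g_S=(1+|x|^{-1})^{4}\bar g$ the trace-free second fundamental form $\hcirc$ scales by a conformal factor, so it vanishes whenever its Euclidean analogue does. This means the entire pointwise value of $\hcirc$ on $S_{\xi,\lambda}$ has to come from the perturbation $\sigma=g-g_S$.

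Next I would linearize $\hcirc$ around $g_S$. The second fundamental form $h$ depends on the metric through Christoffel symbols and the choice of unit normal; by the formula \eqref{nu vs nus} the change of normal is of order $O(|\sigma|)$, and the change of Christoffel symbols is of order $O(|\partial\sigma|)$. Combining these contributions gives an expansion of the schematic form
\[
h-h_{S}=O(|\sigma|\,|h_S|)+O(|\partial\sigma|),\qquad \gamma-\gamma_S=O(|\sigma|), \qquad H-H_S=O(|\sigma|\,|h_S|)+O(|\partial\sigma|).
\]
On $S_{\xi,\lambda}$ we have $|h_S|=O(\lambda^{-1})$ while $|\sigma|=O(\lambda^{-2})$ and $|\partial\sigma|=O(\lambda^{-3})$, so each of the three quantities above is $O(\lambda^{-3})$ in the intrinsic norm. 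Since $\hcirc=h-\tfrac{1}{2}H\,\gamma$ and $\hcirc_S=0$, I conclude
\[
|\hcirc|=|\hcirc-\hcirc_S|=O(\lambda^{-3})
\]
pointwise on $S_{\xi,\lambda}$. Integrating against $\mathrm{d}\mu$, noting that $|S_{\xi,\lambda}|=4\pi\lambda^{2}+O(1)$, immediately yields the claimed estimate $\int_{S_{\xi,\lambda}}|\hcirc|^{2}\,\mathrm{d}\mu=O(\lambda^{-6}\cdot\lambda^{2})=O(\lambda^{-4})$.

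For the differentiability in $\xi$, I would parametrize $S_{\xi,\lambda}$ by $x(\omega)=\lambda\xi+\lambda\omega$ with $\omega$ on the unit Euclidean sphere, so that every occurrence of $\sigma,\,\partial\sigma,\ldots$ is evaluated at $x(\omega)$. Differentiating the integrand in $\xi$ introduces an additional factor $\lambda\,\partial_x$; since each further spatial derivative of $\sigma$ gains a factor $|x|^{-1}=O(\lambda^{-1})$, the combined effect is $O(1)$ and the bound $O(\lambda^{-4})$ survives after one differentiation in $\xi$. The main obstacle is the careful bookkeeping in the linearization of $\hcirc$: because $\hcirc_S$ vanishes identically, there is no leading cancellation to exploit and one has to verify that every contribution to $\hcirc$ indeed scales as $\sigma\cdot h_S$ or $\partial\sigma$, with no stray term of lower order in $\lambda^{-1}$ sneaking in through the trace-free projection.
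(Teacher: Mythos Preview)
Your proposal is correct and follows essentially the same approach as the paper: you exploit the conformal covariance of $\hcirc$ to see that $\hcirc_S\equiv 0$ on the Euclidean sphere $S_{\xi,\lambda}$ (the paper records this as $\int_{S_{\tilde\xi,\tilde\lambda}}|\hcirc_S|^2\,\mathrm{d}\mu_S=0$ via conformal invariance of the integral, cf.~\eqref{w 1a}), and then treat $g=g_S+\sigma$ as a perturbation to obtain the pointwise bound $|\hcirc|=O(\lambda^{-3})$. The paper itself defers the details to \cite[Lemma 42]{acws}, but your sketch is an accurate reconstruction of that argument, including the differentiability-in-$\xi$ step.
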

The next lemma follows from Taylor's theorem and \cite[Lemma 31]{acws}. 
\begin{lem} \label{willmore expansion lemma}
	There exists a constant $\lambda_0>1$ such that for every $\lambda>\lambda_0$  the following holds. 	Let $u\in C^\infty(S_{\xi,\lambda})$ and suppose that there is $\epsilon>0$ with
	$$
| u|+\lambda\,|\nabla u|+\lambda^2\,|\nabla^2 u|+\lambda^3\,|\nabla^3 u|+\lambda^4\,	|\nabla^4 u|\leq \epsilon.
	$$ Then we have,  uniformly for every $\xi\in\mathbb{R}^3$ with $|\xi|<1-\delta$ as $\lambda\to\infty$,
	$$
	\int_{\Sigma_{\xi,\lambda}(u)}H^2\,\mathrm{d}\mu-\int_{S_{\xi,\lambda}}H^2\,\mathrm{d}\mu =-2\,\int_{S_{\xi,\lambda}}W\,u\,\mathrm{d}\mu +O(\epsilon^2\,\lambda^{-2}).
	$$
	This identity may be differentiated once with respect to $\xi$.
\end{lem}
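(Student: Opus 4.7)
The plan is to apply Taylor's theorem to the one-parameter family $t \mapsto \int_{\Sigma_{\xi,\lambda}(tu)} H^2\,\mathrm{d}\mu$ at $t = 0$ with remainder at $t = 1$, using the graph-level expansion of the Willmore integrand supplied by Lemma 31 of \cite{acws} to extract the linear term and bound the remainder.

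First I would compute the zeroth- and first-order Taylor coefficients. The $t=0$ value is tautologically $\int_{S_{\xi,\lambda}} H^2\,\mathrm{d}\mu$. The derivative at $t=0$ is the first variation of the Willmore functional along the normal perturbation associated to the graph height $u$; by the standard Willmore first variation this equals $-2\int_{S_{\xi,\lambda}}W\,u\,\mathrm{d}\mu$. The discrepancy between the $\bar g$-radial perturbation $u\bar\nu$ used to define the graph and a genuine $g$-normal perturbation of magnitude $u$ is a multiplicative factor of $1+O(\lambda^{-1})$; after integrating against the already small quantity $W$, this mismatch contributes only to the remainder.

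Second I would bound the Taylor remainder. By Lemma 31 of \cite{acws}, the integrand $H^2\,\mathrm{d}\mu$ on $\Sigma_{\xi,\lambda}(u)$ admits a finite expansion in monomials of the form $a(x)\prod_{j}\nabla^{k_j}u$ with total order $\sum_j k_j\leq 4$ and coefficients $a(x)$ uniformly bounded on $S_{\xi,\lambda}$ for $|\xi|<1-\delta$. The part beyond the linear term consists of monomials of degree at least two in $\{u,\nabla u,\nabla^2 u,\nabla^3 u,\nabla^4 u\}$. Under the hypothesis $|\nabla^k u|\leq \epsilon\,\lambda^{-k}$, each such monomial is bounded pointwise by $C\,\epsilon^2\,\lambda^{-\sum_j k_j}$; integration over $S_{\xi,\lambda}$ (area $\sim\lambda^2$) and $\sum_j k_j\leq 4$ then give an individual bound of $C\,\epsilon^2\,\lambda^{-2}$, with the worst case attained by terms of the form $(\Delta u)^2$. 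Summing over the finitely many monomials yields the claimed $O(\epsilon^2\,\lambda^{-2})$ remainder.

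For the $\xi$-differentiability, the coefficients $a(x)$ in the Lemma 31 expansion depend smoothly on $\xi$ with uniformly bounded $\xi$-derivatives on $\{|\xi|<1-\delta\}$, so differentiating the identity once in $\xi$ preserves the structure of the monomial estimate above. The main obstacle I expect is the bookkeeping required to isolate the clean expression $-2\int W\,u\,\mathrm{d}\mu$: the raw expansion from Lemma 31 produces several linear-in-$u$ contributions coming from the separate variations of the mean curvature, the second fundamental form, and the area element, and these must be reorganized into the Willmore-operator integral via integration by parts and the defining identity $-W=\Delta H+(|\hcirc|^2+\operatorname{Ric}(\nu,\nu))\,H$. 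This reorganization is dictated by the standard first variation of the Willmore energy and is already essentially encoded in the setup of \cite{acws}, so no new analytic input is needed beyond careful accounting of the quadratic-and-higher remainder.
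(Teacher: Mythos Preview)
Your approach---Taylor expand $t\mapsto\int_{\Sigma_{\xi,\lambda}(tu)}H^2\,\mathrm{d}\mu$ and invoke \cite[Lemma 31]{acws} for the coefficient structure---is exactly what the paper does (its entire proof is the sentence ``follows from Taylor's theorem and \cite[Lemma 31]{acws}'').

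One point in your remainder estimate needs correction. You assert that the coefficients $a(x)$ in the monomial expansion are uniformly bounded, deduce a pointwise bound $C\,\epsilon^2\,\lambda^{-\sum k_j}$ on each quadratic-or-higher monomial, and then claim that integrating over area $\sim\lambda^2$ with $\sum k_j\leq 4$ yields $O(\epsilon^2\lambda^{-2})$, ``with the worst case attained by terms of the form $(\Delta u)^2$''. But $\lambda^2\cdot\epsilon^2\,\lambda^{-\sum k_j}=\epsilon^2\,\lambda^{2-\sum k_j}$, which is \emph{largest} when $\sum k_j$ is smallest; so with merely bounded coefficients the term $u^2$ would contribute $O(\epsilon^2\lambda^2)$, not $O(\epsilon^2\lambda^{-2})$, and your ``worst case'' has the inequality reversed. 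What actually makes the bound work is that the coefficients $a(x)$ carry their own $\lambda$-weights---schematically $a(x)=O(\lambda^{\sum k_j-4})$, coming from the background scaling $H\sim\lambda^{-1}$, $h\sim\lambda^{-1}$, $\operatorname{Ric}\sim\lambda^{-3}$ on $S_{\xi,\lambda}$---so that every quadratic-or-higher monomial integrates to $O(\epsilon^2\lambda^{-2})$ regardless of $\sum k_j$. This weighted structure is precisely what \cite[Lemma 31]{acws} records; once you quote it with the correct scaling, the rest of your argument goes through unchanged.
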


\end{appendices}

\end{document}